\crefname{hypothesis}{Hypothesis}{Hypotheses}
\newcommand{\veps}{\epsilon}
\newcommand{\POI}{\mathrm {Poi}}
\newcommand{\cZ}{\mathcal {Z}}
\newcommand{\dZ}{\mathbb {Z}}
\newcommand{\dC}{\mathbb {C}}
\newcommand{\dR}{\mathbb {R}}
\newcommand{\dE}{\mathbb {E}}
\newcommand{\cG}{ \mathbb{G}}
\newcommand{\cGr}{\Dot{\mathcal {G}}}
\newcommand{\cGrr}{\ddot{\mathcal {G}}}
\newcommand{\cGe}{\vec{\mathcal {G}}}
\newcommand{\CAY}{{\mathrm{Cay}}}
\newcommand{\TR}{{\mathrm{Tr}}}
\newcommand{\SCH}{{\mathrm{Sch}}}
\newcommand{\toBS}{\stackrel{{\mathrm{BS}}}{\longrightarrow}}
\newcommand{\toNB}{\stackrel{{\mathrm{NB}}}{\longrightarrow}}
\newcommand{\cP}{\mathcal {P}}
\newcommand{\1}{1\!\!{\sf I}}
\newcommand{\IND}{\1}
\newcommand{\ABS}[1]{{{\left| #1 \right|}}} 
\newcommand{\ee}{\mathrm{e}}
\newcommand{\rootp}{o^{\scaleto{+}{3pt}}}
\newcommand{\rootm}{o^{\scaleto{-}{3pt}}}
\newcommand{\rootpm}{o^{\scaleto{\pm}{3pt}}}
\begin{document}

\title{\Large Sparse graphs and their Benjamini-Schramm limits: a spectral tour}
    \author{Charles Bordenave\thanks{CNRS \& Aix-Marseille Université (\email{charles.bordenave@cnrs.fr)}.}}

\date{}

\maketitle






\begin{abstract} 
Sparse graphs with bounded average degree form a rich class of discrete structures where local geometry strongly influences global behavior. The Benjamini–Schramm (BS) convergence offers a natural framework to describe their asymptotic local structure. In this note, we survey spectral aspects of BS convergence and their applications, with a focus on random Schreier graphs and covering graphs. We review some recent progress on the spectral decomposition of the local operators on graphs. We discuss the behavior of extreme eigenvalues and the growing role of strong convergence in distribution, which rules out spectral outliers.  We also give a new application of strong convergence to the typical graph distance between vertices in Schreier graphs.
 \end{abstract}

\section{Introduction. }

Sparse graphs with bounded average degree form a rich class of discrete structures where local geometry strongly influences global behavior. The Benjamini–Schramm (BS) convergence offers a natural framework to describe their asymptotic local structure through random rooted graphs. The first instances of BS limits were motivated by combinatorial optimization problems on random structures \cite{zbMATH00123709} and by questions on the recurrence of random walks on planar graphs \cite{zbMATH01868576}. Over the years, this notion has become a central component of graph limit theory \cite{zbMATH06122804}. It has proved particularly fruitful for random graphs (such as Erdős–Rényi graphs), for Cayley and Schreier graphs arising from group actions, and for random or deterministic graph coverings. There are now numerous applications and extensions of BS limits to other discrete or geometric structures, including hypergraphs and manifolds.

Since the early applications of BS limits to the study of spectra of local operators on graphs (such as \cite{MR2724665}), many developments and advances have taken place. In this note, we survey some of these recent results, with a special focus on random Schreier graphs and random covering graphs, which provide a rich class of examples.

We begin in Section~\ref{sec:BS} by recalling the definition and basic examples of BS convergence and presenting Schreier graphs and covering graphs. In Section~\ref{sec:op}, we introduce local operators, such as adjacency and non-backtracking operators, and describe the $*$-algebra they generate. We explain how BS convergence translates into convergence in non-commutative distribution and review recent progress on the spectral decomposition of these operators. Section~\ref{sec:edge} focuses on the behavior of extreme eigenvalues and on the growing role of strong convergence in distribution, which rules out spectral outliers. Within this framework, we revisit the celebrated Alon–Boppana bound and present convergence results for extreme eigenvalues of classical random graphs, as well as of random Schreier and covering graphs. We also discuss a new application of strong convergence to the typical graph distance between vertices in Schreier graphs. Along the way, we highlight several open problems that illustrate the richness of the field.

\section{Convergence of sparse marked graphs. }
\label{sec:BS}
In this section, we survey the necessary graph terminology and the  Benjamini-Schramm convergence for sparse graph sequences. We refer to \cite{MR2354165,zbMATH06122804}. 

\subsection{Marked graphs. }\label{subsec:MarkG} A {\em graph} $G = (V,E)$ will be a pair formed by a countable vertex set $V$ and a countable edge set $E$. An edge $e \in E$ has an origin $e_- \in V$ and a destination $e_+ \in V$. The graphs here are undirected meaning that $E$  is equipped with an involution without fixed point $e \mapsto e^{-1}$ such that 
\begin{equation}\label{eq:symme}
(e^{-1})_+ = e_-  \hbox{ for all $e \in  E$}.
\end{equation}
The quotient of $E$ by the involution is the set of unoriented edges of $G$. Our graphs may have multiple edges, that is edges $e,f$ with the same end vertices ($e_\pm = f_\pm$) and self-loops, that is edges $e$ with $e_+ = e_-$.  
A graph is {\em locally finite} if for all $v \in V$, $\deg(v)  < \infty$, where the degree $\deg(v)$ is the number of $e \in E$ such that $e_- = v$ (or  $e_+ = v$ by \eqref{eq:symme}).  A {\em marked graph} $G = (V,E,\xi)$ on a set $\cZ$ is a  graph $(V,E)$ and a map $\xi : E \to \cZ$.


\begin{example}[Conductance graphs] Let $G = (V,E,\xi)$ be a marked graph with a mark function $\xi : E \to  \dR_+$ such that $\xi(e^{-1}) = \xi(e)$ for all $e \in E$. The marks could measure a conductance or strength of an edge.
\end{example}

%
%

\begin{example}[Cayley graphs] \label{ex:cay} Let $\Gamma$ be a finitely generated group and a  generating set $S$ which is symmetric, that is $S^{-1} = S$. The (left) Cayley graph  $\CAY(\Gamma,S)$ is the  $S$-marked graph with vertex set $\Gamma$, edge set $E = S \times \Gamma$ with, if $e = (s,g) \in E$, $e_- = g$, $e^{-1} = (s^{-1}, s.g)$ and mark $\xi(e) = s \in S$. In words, an element $g \in \Gamma$ is connected to $s.g$ with an edge marked by $s$. The degree of a vertex is equal to $|S|$. 
\end{example}

\begin{example}[Schreier graphs] \label{ex:sch} We continue Example \ref{ex:cay} and assume that $\Gamma$ acts on a countable set $V$, that is there exists a representation $\rho : \Gamma \to \mathrm{S}_V$ where $\mathrm{S}_V$ is the symmetric group over $V$. The Schreier graph  $\SCH(\Gamma,S,\rho)$ is the $S$-marked graph with vertex set $V$, edge set $E = S \times V$ with, if $e = (s,x) \in E$, $e_- = x$, $e^{-1} = (s^{-1}, \rho(s)(x))$ and mark $\xi(e) = s \in S$. In words: $x \in V$ is connected to $\rho(s)(x)$ with an edge marked by $s$. 
\end{example}

 \subsection{The local topology. }
 \label{subsec:localtop}
We assume from now on that $\cZ$ is a complete separable metric space. To define a topology on marked graphs, we take a reference vertex.  A {\em rooted marked graph} $g = (G,o)$ is the pair formed by a connected marked graph and a distinguished vertex $o \in V$, the root.  Two rooted marked graphs $(G, o)$ and $(G',o')$ are {\em isomorphic} if there exists a graph isomorphism between $(V,E)$ and $(V',E')$ which sends $o$ to $o'$ and leave the marks invariant.
Let $\cGr = \cGr(\cZ)$ denote the space of isomorphism classes of locally finite rooted marked graphs. In combinatorial language, $\cGr$ is the set of unlabeled locally finite rooted marked graphs. 
It is fine to write for $g \in \cGr$,  $g = (G,o) $ where $G = (V,E,\xi)$ is a marked graph as long as the properties we consider are class properties (that is, do not depend on the choice of the representative of $g$). 

The {\em local topology} on $\cGr$ is the product topology inherited from projections around the root. Concretely, two rooted marked graphs are close if, we can find a large radius $r$ and a small $\delta >0$ such that the subgraphs spanned by balls around the roots are isomorphic and the marks are $\delta$-close.  The set $\cGr$ equipped with the local topology is a complete separable metric space. We denote by $\cP( \cGr)$ the set of probability measures on $\cGr$ equipped with the topology of weak convergence (called the local weak topology). This space  $\cP( \cGr)$ plays a key role in the sequel. An element  $\mu \in \cP( \cGr)$ is the law of a random rooted marked graph $(G,o)$. We will use the probability notation, for an integrable function $f$ in $L^1(\cGr,\mu)$, 
$$
\dE_{\mu} f (G,o) = \int f(G,o) d\mu (G,o). 
$$

\subsection{Benjamini-Schramm convergence. }

Let $G = (V,E,\xi)$ be a marked graph with $V$ finite. The {\em neighborhood distribution} $U(G) \in \cP( \cGr)$ is the law of the unlabeled uniformly rooted marked graph, namely
\begin{equation}\label{eq:defU}
U(G)=\frac{1}{|V|} \sum_{v\in V}\delta_{[G,v]},
\end{equation}
where $| V |$ is the cardinal number of $V$, $[G,v] \in \cGr$ stands for the isomorphism class of $(G(v),v)$ and $G(v)$ is the connected component of $v$ in $G$, see Figure \ref{fi:UG} for an illustration.

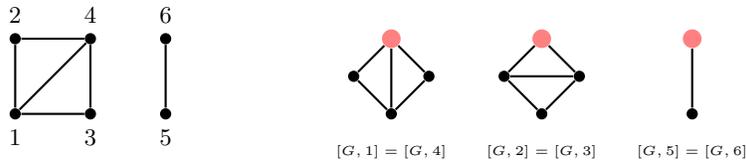
\begin{figure}[htbp]\centering
  \begin{tikzpicture}
    \tikzstyle{real}=[inner sep=1.5pt, fill=black, circle]
    \node[real, label=below:\small{$1$}] (1) at (0,0.5) {};
    \node[real, above of=1, label=\small{$2$}] (2) {};
    \draw[thick] (1)--(2);
    \node[real, label=below:\small{$3$}] (3) at (1,0.5) {};
    \node[real, above of=3, label=\small{$4$}] (4) {};
    \draw[thick] (3)--(4);
    \node[real, label=below:\small{$5$}] (5) at (2,0.5) {};
    \node[real, above of=5, label=\small{$6$}] (6) {};
  
    \draw[thick] (1)--(4);
    \draw[thick] (1)--(3);
    \draw[thick] (2)--(4);
    \draw[thick] (5)--(6);

    \coordinate (shift) at (8,1.5);
    \begin{scope}[shift=(shift)]
     \tikzstyle{real}=[inner sep=1.5pt, fill=black, circle]
      \tikzstyle{root}=[inner sep=2.5pt, fill=red!50, circle]
      \node[root] (1) at (-3,0) {} ;
      \node[real] (2)  at (-3,-1) {};
      \node[real] (3) at (-2.5,-0.5) {};
      \node[real] (4) at (-3.5,-0.5){};
      \draw[thick] (1)--(2);
      \draw[thick] (1)--(3);
      \draw[thick] (1)--(4);
      \draw[thick] (2)--(3);
      \draw[thick] (2)--(4);
      \node at (-3,-1.5) {\tiny{$[G,1] = [G,4]$}};

      \node[root] (01) at (-1,0) {} ;
      \node[real] (02)  at (-1,-1) {};
      \node[real] (03) at (-1.5,-0.5) {};
      \node[real] (04) at (-0.5,-0.5){};
      \draw[thick] (01)--(03);
      \draw[thick] (01)--(04);
      \draw[thick] (03)--(04);
      \draw[thick] (02)--(03);
            \draw[thick] (02)--(04);
       \node at (-1,-1.5) {\tiny{$[G,2] = [G,3]$}};

      \node[root] (5) at (1,0) {};
      \node[real] (6)  at (1,-1) {};
      \draw[thick] (6)--(5);
      \node at (1,-1.5) {\tiny{$[G,5] = [G,6]$}};

    \end{scope}
  \end{tikzpicture}
  \caption{\label{fi:UG} On the left, a graph $G$ with two connected components, on the right, the associated unlabeled rooted graphs. $U(G)$ is the uniform measure on the three displayed unlabeled rooted graphs.}
\end{figure}

\begin{definition} A sequence of finite marked graphs $(G_n)$ converges in {\em Benjamini-Schramm} (BS) sense toward $\mu \in \cP(\cGr)$ if $U(G_n)$ converges weakly to $\mu$. We shall then write $G_n \toBS \mu$ or even $G_n \toBS G$ if $G$ is a random rooted marked graph whose isomorphic class has law $\mu$.
\end{definition}

Graph sequences $G_n$ with BS limits are sparse, in the sense that their typical vertex degree is of order $1$. For example, the sequence $(U(G_n))$ is tight in $\cP( \cGr)$ if the degree sequence is uniformly integrable \cite{zbMATH06423374,MR3405616}.

\subsection{First examples of BS convergence. }

We give a few examples of graphs converging in the BS sense. In  subsections \ref{subsec:schreier}-\ref{subsec:cover}, we will discuss the convergence of finite Schreier and covering graphs.

\begin{example}[Box in $\dZ^d$] \label{exZd} For integer $d \geq 1$, let $\dZ^d_\xi  = (\dZ^d, \xi)$ be the random marked graph where on each unoriented edge of $ \dZ^d$, we put an independent random mark  $\xi(e) = \xi(e^{-1})$ with some common distribution on $\cZ$. Let $\dZ_\xi^d \cap B_n$ be the restriction  $\dZ_\xi^d$ to the box $B_n  = [-n,n]^d$. Then, with probability one  with the respect to the randomness of the marks, we have $\dZ^d_\xi \cap B_n  \toBS \dZ^d_\xi$ (where $\dZ^d_\xi$ is rooted at say $0$). As a byproduct, the bond percolation with parameter $p$ (independently each edge is open with probability $p$ and closed otherwise) on $B_n$ converges toward the connected component of $0$ in  the bond percolation in $\dZ^d$.
\end{example}

\begin{example}[Regular graphs with large girth] Along a subsequence of integers, for integer $d \geq 2$, let $G_n$ be a $d$-regular graph on $n$ vertices: all vertices have degree $d$ ($dn$ is even). If the length of the shortest cycle (ie the girth) of $G_n$  diverges then $G_n \toBS T_d$ where $T_d$ is the infinite rooted $d$-regular tree, see Figure \ref{figGW}. The same conclusion  holds if the graph has few short cycles. More precisely, for integer $k \geq 1$, let $C_k (G_n)$ be the number of cycles of length $k$ in  $G_n$. If for all $k$, $C_k(G_n) / n \to 0$, then we also have $G_n \toBS T_d$. This last condition holds with probability one for a sequence of uniformly sampled $d$-regular (simple) graphs on the vertex set $\{1, \ldots, n \}$. 
\end{example}
\begin{figure}[htb]
\begin{center}
\vspace{-0.4cm}\includegraphics[width=0.4\textwidth]{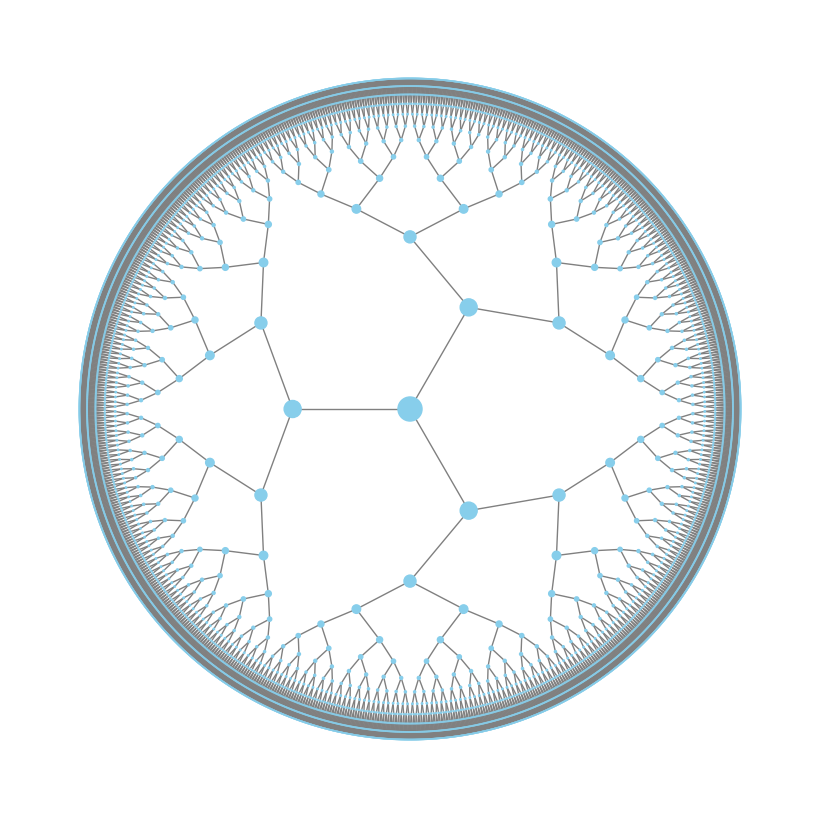} \quad \includegraphics[width=0.4\textwidth]{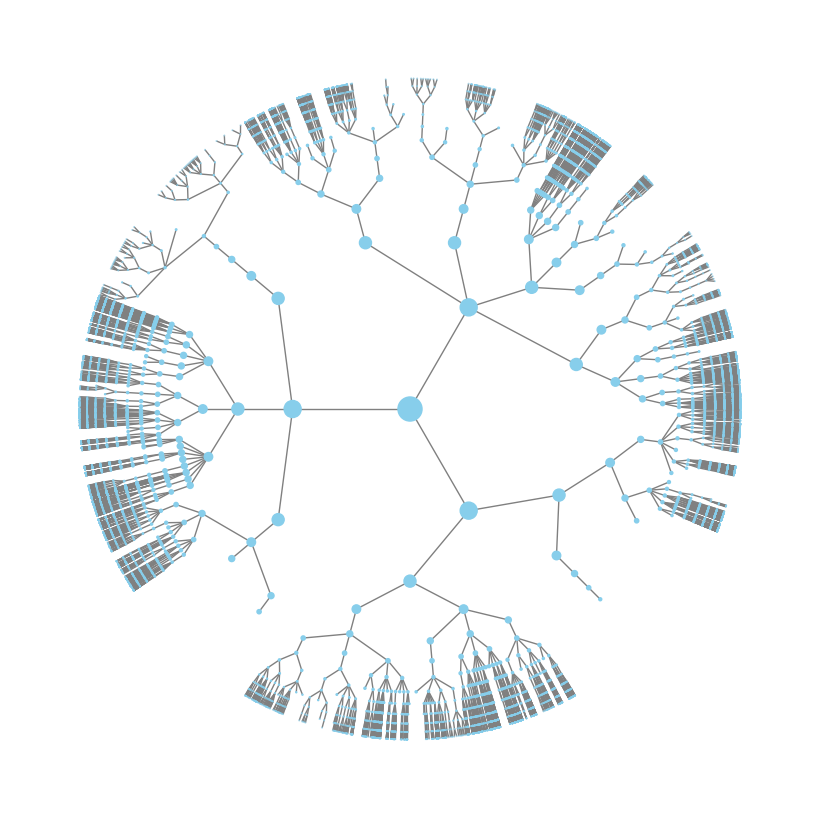}
\label{figGW} \caption{Left: $3$-regular infinite tree. Right: A realization of a Galton-Watson tree with  $\POI(2)$ distribution.}
\end{center}
\end{figure}

\begin{example}[Erd\H{o}s-Rényi random graph and percolation in growing dimension] \label{exER} For real $d >0$, let $G_n$  be the random simple graph on $\{1,\ldots,n\}$ where each undirected edge $\{ u, v \}$, $u >  v$, is present independently with probability $\min(1,d/n)$. Then with probability one, $G_n \toBS T$ where $T$ is a Galton-Watson random tree with offspring distribution $\POI(d)$, the Poisson law with mean $d$. This rooted tree $T$ is built as follows. The root has a number of offsprings sampled according to   $\POI(d)$. Then, independently, each such offspring produces itself a number of offsprings sampled according to   $\POI(d)$ and so on, see Figure \ref{figGW}. The tree $T$ is infinite with positive probability if and only if $d > 1$. Similarly, if $G_n$ is the bond percolation graph on $\dZ^n$ with parameter $d/(2n)$, then  with probability one, as $n \to \infty$, $G_n$ converges locally toward $T$, the same Galton-Watson random tree.
\end{example}

\subsection{Unimodularity. }
The neighborhood distribution $U(G)$ satisfies a reversibility assumption called {\em unimodularity}. An {\em edge-rooted marked graph} $(G,\varrho)$ is a connected marked graph $G$ with a distinguished $\varrho \in E$. The above isomorphisms for rooted marked graphs extend to edge-rooted marked graphs and we may speak of unlabeled edge-rooted marked graphs. We denote by $\cGe$ the set of unlabeled edge-rooted locally finite marked graphs. The local topology extends to this setting as well. A probability measure $\mu \in \cP(\cGr)$ is {\em unimodular} if for every non-negative measurable functions $f$ on $\cGe$, we have  
\begin{equation}
\label{eq:defunimod}\dE_\mu \left[ \sum_{e \in E^-_o}f(G,e)\right]  = \dE_\mu \left[  \sum_{e \in E^+_o}f(G,e) \right] ,   
\end{equation}
where $E^{\pm}_v = \{ e \in E : e_{\pm} = v\}$ is the set of out/in-going edges at $v \in V$.  Property  \eqref{eq:defunimod} first appeared in Aldous \cite{zbMATH00123709}.
 As explained in \cite[Proposition 2.2]{MR2354165}, this is equivalent to the definition of unimodularity introduced in \cite{zbMATH01868576} where it was presented as an extension of unimodularity in group theory. For $\mu$ a Dirac mass at a Cayley graph, Property \eqref{eq:defunimod} boils down to the fact that countable groups are unimodular.

It is easy to check that for any finite marked graph $G = (V,E)$, $U(G)$ is unimodular. Indeed,
$$|V| \cdot \dE_{U(G)} \left[ \sum_{e \in E^-_o}f(G,e)\right] =  \sum_{u \in V}  \sum_{e \in E^-_u}f(G,e)  = \sum_{e \in E} f(G,e)  = \sum_{u \in V}  \sum_{e \in E^+_u}f(G,e) = |V| \cdot \dE_{U(G)} \left[  \sum_{e \in E^+_o}f(G,e) \right].
$$
It is also not hard to check that the set of unimodular measures is closed in $\cP(\cGr)$. Therefore, all BS limits of finite marked graph sequences are unimodular (the converse was recently proven to be false in a breakthrough paper \cite{Bowen:2024azt}). This property is crucial in many applications of BS convergence.

\subsection{BS convergence of Schreier graphs. }

\label{subsec:schreier}

Due to the underlying group action, the BS convergence is somewhat simpler to define for sequence of Schreier graphs.  As in examples \ref{ex:cay}-\ref{ex:sch}, we consider a finitely generated group $\Gamma$ with unit $\ee$. We assume that along a subsequence $n \to \infty$, $\Gamma$  acts on a finite set $V_n$ of cardinal $n$ through a representation $ \rho_n : \Gamma \to \mathrm{S}_{V_n}$. The next folklore lemma asserts that the convergence of Schreier graphs to their Cayley graph is equivalent to the asymptotic triviality of characters $ \TR (  \rho_n(g)  ) =  | \{ x \in V_n : \rho_n(g) (x) = x\} |$.

\begin{lemma} \label{le:BSsch}
The following statements are equivalent: 
\begin{enumerate}[label = (\roman*)]
\item \label{iBS} For some finite symmetric generating set $S$ of $\Gamma$, $\SCH(\Gamma,S,\rho_n) \toBS \CAY(\Gamma,S)$.  
\item \label{iiBS} For all finite symmetric generating sets $S$ of $\Gamma$, $\SCH(\Gamma,S,\rho_n) \toBS \CAY(\Gamma,S)$. 
\item  \label{iiiBS} For all $g \ne \ee$,  $ \frac 1 n  \TR (  \rho_n(g)  )   \to 0
$.
\end{enumerate}
\end{lemma}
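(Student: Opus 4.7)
The overall structure of the proof is a cycle: implication (ii) $\Rightarrow$ (i) is immediate, so the content lies in (i) $\Rightarrow$ (iii) and (iii) $\Rightarrow$ (ii). The entire argument rests on one structural observation that I would prove first as a lemma before addressing the three implications.

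The key observation is to characterize, for a fixed $x \in V_n$ and a fixed radius $r$, when the ball $B_r(x)$ in $\SCH(\Gamma,S,\rho_n)$ is isomorphic as a rooted $S$-marked graph to $B_r(\ee)$ in $\CAY(\Gamma,S)$. The natural candidate for the isomorphism is the map $g \mapsto \rho_n(g)(x)$ defined on $B_\Gamma(r) := \{g \in \Gamma : |g|_S \leq r\}$, where $|g|_S$ is the word length with respect to $S$. By construction this map preserves roots, out-edges, and marks, so it is an isomorphism of rooted marked graphs if and only if it is injective on $B_\Gamma(r)$. Injectivity fails precisely when there exist distinct $g_1, g_2 \in B_\Gamma(r)$ with $\rho_n(g_2^{-1}g_1)(x) = x$. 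Writing $F_n(g) = \{x \in V_n : \rho_n(g)(x) = x\}$, so that $\TR(\rho_n(g)) = |F_n(g)|$, the set of ``bad'' vertices at scale $r$ is exactly
\[
B^{(r)}_n := \bigcup_{g \in B_\Gamma(2r) \setminus \{\ee\}} F_n(g).
\]

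With this identification, (iii) $\Rightarrow$ (ii) is straightforward: given any finite symmetric generating set $S$ and any $r \geq 1$, the ball $B_\Gamma(2r)$ is finite, so the union bound gives $|B^{(r)}_n|/n \leq \sum_{g \in B_\Gamma(2r) \setminus \{\ee\}} \TR(\rho_n(g))/n \to 0$ by (iii). Hence $U(\SCH(\Gamma,S,\rho_n))$ assigns asymptotic mass $1$ to rooted marked graphs that agree with $(\CAY(\Gamma,S), \ee)$ on the ball of radius $r$, for every $r$, which is exactly BS convergence to the Dirac mass at $\CAY(\Gamma,S)$. For (i) $\Rightarrow$ (iii), fix $g \neq \ee$ and set $r := |g|_S$ (using the generating set $S$ from hypothesis (i)). Since $F_n(g) \subseteq B^{(r)}_n$ and (i) forces $|B^{(r)}_n|/n \to 0$, we conclude $\TR(\rho_n(g))/n \to 0$.

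The only step that requires genuine care is the structural lemma above: one must check that when the map $g \mapsto \rho_n(g)(x)$ is injective on $B_\Gamma(r)$, it is automatically an isomorphism of marked rooted graphs, i.e., the edges and marks match up. This follows because both balls are defined intrinsically from the generators: an $s$-labeled edge from $g$ to $sg$ in the Cayley ball is mapped to an $s$-labeled edge from $\rho_n(g)(x)$ to $\rho_n(sg)(x) = \rho_n(s)\rho_n(g)(x)$, which is precisely the $s$-labeled edge out of $\rho_n(g)(x)$ in the Schreier graph. Once this bookkeeping is settled, the proof is a short union bound plus the elementary fact that $S$ generates $\Gamma$, so every $g \neq \ee$ eventually lies in some $B_\Gamma(r)$. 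I do not expect any serious obstacle beyond correctly setting up this combinatorial dictionary between the orbit of $x$ under words in $\rho_n(S)$ and the ball in the Cayley graph.
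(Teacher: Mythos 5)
The paper itself offers no proof of this lemma (it is labeled ``folklore''), so there is nothing to compare against; your cycle $\text{(ii)} \Rightarrow \text{(i)} \Rightarrow \text{(iii)} \Rightarrow \text{(ii)}$ and your reduction to counting vertices where the orbit map $g \mapsto \rho_n(g)(x)$ fails to be a local bijection is certainly the standard route. However, the central structural claim as you state it is false: injectivity of $g \mapsto \rho_n(g)(x)$ on $B_\Gamma(r)$ does \emph{not} imply that the $r$-balls are isomorphic as rooted marked graphs. Injectivity guarantees a vertex bijection and that each Cayley-ball edge maps to a Schreier-ball edge, but it does not rule out \emph{extra} edges in the Schreier ball between two vertices $\rho_n(g_1)(x)$ and $\rho_n(g_2)(x)$ with $g_1, g_2 \in B_\Gamma(r)$ not adjacent in $\Gamma$. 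A concrete counterexample: take $\Gamma = \dZ$, $S = \{\pm 1\}$, $\rho_n$ the action on $\dZ_n$ by translation, and $n = 2r+1$. Then $g \mapsto g \bmod n$ is a bijection from $B_\Gamma(r) = \{-r,\ldots,r\}$ onto $\dZ_n$, so the orbit map is injective, yet the $r$-ball in the Schreier graph is the full cycle $C_{2r+1}$ while the $r$-ball in the Cayley graph is a path of length $2r$; they are not isomorphic. Consequently the set $B_n^{(r)}$ built from $B_\Gamma(2r)$ does not exactly detect non-isomorphic balls, and your implication $\text{(iii)} \Rightarrow \text{(ii)}$, which uses ``$x \notin B_n^{(r)}$ implies the $r$-balls agree,'' does not literally hold.

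The repair is routine but needed: for the $r$-ball isomorphism one must require injectivity on $B_\Gamma(r+1)$ (equivalently, no nontrivial fixed points among $B_\Gamma(2r+2)$). Injectivity on $B_\Gamma(r+1)$ rules out the spurious edges, since any $s$-edge from $\rho_n(g_1)(x)$ to $\rho_n(g_2)(x)$ with $g_1,g_2 \in B_\Gamma(r)$ forces $\rho_n(sg_1)(x) = \rho_n(g_2)(x)$ with $sg_1, g_2 \in B_\Gamma(r+1)$, hence $sg_1 = g_2$. Replacing $B_\Gamma(2r)$ by $B_\Gamma(2r+2)$ throughout, the union bound and the rest of the argument go through unchanged; note also that your $\text{(i)} \Rightarrow \text{(iii)}$ step only uses the true direction (isomorphism implies injectivity) and so was already correct.
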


Lemma \ref{le:BSsch} notably implies that the convergence of Schreier graphs is the property of the representations $\rho_n$, it does not depend on the specific choice of the generating set. In \eqref{eq:WCGr}, we will see that \ref{iiiBS} is the convergence in distribution of $\rho_n$ toward the regular representation.

\begin{example}[Congruence subgroups]
The group $\dZ$ acts on $\dZ_n$ by congruence mod $n$. Consider the groups $\Gamma = \dZ^d$ or $\Gamma = \mathrm{SL}(d,\dZ)$ for example, if $\rho_n$ is the action by congruence mod $n$ on $\dZ^d_n$ or $\mathrm{SL}(d,\dZ_n)$ then \ref{iiiBS} is easily satisfied. Note that in this case, the Schreier graph is itself be a Cayley graph.
\end{example}

\begin{example}[Random actions of the free group]\label{ex:freeperm}
Let $\mathrm{F}_d$ be the free group with $d$ free generators, say $g_1,\ldots,g_d$. For integer $n \geq 1$, a $d$-tuple $(\sigma_1,\ldots,\sigma_d)$ of permutations in $\mathrm{S}_n$ defines uniquely a permutation representation $\rho_n : \mathrm{F}_d \to  \mathrm{S}_n$ by setting $\rho_n(g_i) = \sigma_i$. By taking independent and uniformly distributed permutations on $\mathrm{S}_n$, we obtain a random permutation of $\mathrm{F}_d $. It was established by Nica \cite{MR1197059} that with probability tending to one as $n \to \infty$, \ref{iiiBS} holds. In particular, for any finite generating set $S = S^{-1}$ of $\mathrm{F_d}$, the random Schreier graph $\SCH(\mathrm{F}_d,S,\rho_n)$ converges toward its associated Cayley graph, in probability. For random actions of free product of groups and surface groups, see \cite{zbMATH07931115} and references therein. 
\end{example}

\subsection{BS convergence of covering graphs. }
\label{subsec:cover}

Let $G = (V,E,\xi)$ be a connected marked graph. A {\em covering graph} or {\em cover} $G_c = (V_c,E_c,\xi_c)$ is a marked graph such that there exists a surjective graph homomorphism $\varphi : E_c \to E$, where by graph homomorphism, we mean a map which preserves marks ($\xi_c(e) = \xi(\varphi(e)$), adjacency  (if $e_- = f_-$ then $\varphi(e)_- = \varphi(f)_-$)  and inversion ($\varphi(e^{-1}) = \varphi(e)^{-1}$). This map is called the {\em covering map}.

If $G_c$ is connected then the map $ e \mapsto |\varphi^{-1} (e) |$ is constant (could be infinite). For integer $n \geq 1$, a $n$-cover of $G$ is a cover $G_c$ such that the covering map satisfies $|\varphi^{-1} (e) | = n$ for all $e \in E$, see Figure \ref{fig2}. The {\em universal covering tree} $T_G$ of $G$ is a cover which is a tree, it is unique up to graph isomorphisms. The universal cover is maximal, in the sense that any connected cover of $G$ is also covered by $T_G$.

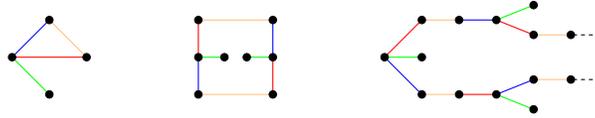
\begin{figure}[htb]
\begin{center}  
\resizebox{8cm}{!}{
\begin{tikzpicture}[main/.style={circle, draw , fill = black!15, text = black, thick},high/.style={circle, draw , fill = red!90, text = black, thick}]


\tikzset{->-/.style={decoration={
  markings,
  mark=at position .5 with {\arrow{>}}},postaction={decorate}}}

\draw[thick,color = red] (0,0) to (2,0)  ; 
\draw[thick, color= blue] (0,0) to (1,1)  ;
\draw[thick, color = green] (0,0) to (1,-1)  ; 
\draw[thick, color = orange!50] (1,1) to (2,0)  ; 

\draw[fill = black] (0,0) circle(0.1)  ; 
\draw[fill = black] (2,0) circle(0.1)  ; 
\draw[fill = black] (1,1) circle(0.1)  ; 
\draw[fill = black] (1,-1) circle(0.1)  ;

\coordinate (shift) at (5,0);
\begin{scope}[shift=(shift)]
\draw[thick,color = red] (0,0) to (0,1)  ; 
\draw[thick,color = orange!50] (0,1) to (2,1)  ;
\draw[thick,color= blue] (2,1) to (2,0)  ; 
\draw[thick,color = red] (2,0) to (2,-1)  ; 
\draw[thick,color = orange!50] (2,-1) to (0,-1)  ; 
\draw[thick,color= blue] (0,0) to (0,-1)  ;

\draw[thick,color =green] (0,0) to (0.7,0)  ; 
\draw[thick,color =green] (2,0) to (1.3,0)  ;

\draw [fill = black]  (0,0) circle(0.1)  ; 
\draw[fill = black]   (2,1) circle(0.1)  ; 
\draw[fill = black]   (0,1) circle(0.1)  ; 
\draw [fill = black]  (0.7,0) circle(0.1)  ; 

\draw[fill = black]  (2,0) circle(0.1)  ; 
\draw[fill = black]  (0,-1) circle(0.1)  ; 
\draw[fill = black] (2,-1) circle(0.1)  ; 
\draw[fill = black]  (1.3,0) circle(0.1)  ; 
\end{scope}

    \coordinate (shift) at (10,0);
    \begin{scope}[shift=(shift)]

\draw[thick,color = red] (0,0) to (1,1)  ;
\draw[thick,color =green] (0,0) to (1,0)  ; 
\draw[thick,color= blue] (0,0) to (1,-1)  ; 

\draw[thick,color = orange!50] (1,1) to (2,1)  ;
\draw[thick,color = orange!50] (2,-1) to (1,-1)  ; 

\draw[thick,color= blue] (2,1) to (3,1)  ;
\draw[thick,color = red] (2,-1) to (3,-1)  ; 

\draw[thick,color =green] (4,1.4) to (3,1)  ;
\draw[thick,color= red] (4,0.6) to (3,1)  ;
\draw[thick,color= blue] (4,-0.6) to (3,-1)  ;
\draw[thick,color =green] (4,-1.4) to (3,-1)  ;

\draw[thick,color = orange!50] (4,0.6) to (5,0.6)  ;
\draw[thick,color = orange!50] (4,-0.6) to (5,-0.6)  ;
strictly less symbol
\draw[thick,dashed] (5.8,0.6) to (5,0.6)  ;
\draw[thick,dashed] (5.8,-0.6) to (5,-0.6)  ;

\draw[fill =  black] (0,0) circle(0.1)  ; 
\draw[fill =  black] (1,-1) circle(0.1)  ; 
\draw[fill = black] (1,1) circle(0.1)  ; 
\draw[fill =  black] (1,0) circle(0.1)  ; 

\draw[fill = black] (2,-1) circle(0.1)  ; 
\draw[fill =  black] (2,1) circle(0.1)  ; 

\draw[fill =  black] (3,-1) circle(0.1)  ; 
\draw[fill =  black] (3,1) circle(0.1)  ; 

\draw[fill =  black] (4,-1.4) circle(0.1)  ; 
\draw[fill =  black] (4,-0.6) circle(0.1)  ; 

\draw[fill =  black] (4,1.4) circle(0.1)  ; 
\draw[fill = black] (4,0.6) circle(0.1)  ;

\draw[fill =  black] (5,0.6) circle(0.1)  ;  

\draw[fill = black] (5,-0.6) circle(0.1)  ; 

\end{scope}

\end{tikzpicture}
} 
\end{center}
\label{fig2} \caption{A graph, a $2$-lift and their universal covering tree.}\end{figure}

Covers can be understood through the lens of group actions. Assume that $G$ is finite and write $E  = \{e_1, \ldots, e_d, e_1^{-1}, \ldots, e_d^{-1} \}$. Up to isomorphisms, it is possible to represent any $n$-cover of  $G$ thanks to a $d$-tuple of permutations $(\sigma_1, \ldots , \sigma_d )$ in $\mathrm{S}_n$, see Figure \ref{fi:lift}. The vertex set is the fiber $V_n = V \times \{1,\ldots, n\}$, the edge set $E_n = E \times \{1,\ldots, n\}$ where the edge $(e_i,x)$ satisfies $(e_i,x)_- = ((e_i)_-,x)$ and $ (e_i,x)^{-1} = (e_i^{-1}, \sigma_{i}(x))$. Hence as explained in Example \ref{ex:freeperm}, the $n$-cover is encoded by the representation $\rho_n : \mathrm{F}_d \to \mathrm{S}_n$ satisfying $\rho_n(g_i) = \sigma_i$.   Similarly, $|V|$ disjoint copies of the universal cover of $G$ are obtained by considering the marked graph on the vertex set $V_{\mathrm F} = V \times \mathrm F_d$, edge set $E_{\mathrm F} = E \times \mathrm F_d$ and the edge $(e_i,g)$ satisfies $(e_i,g)_- = ((e_i)_-,g)$ and $ (e_i,g)^{-1} = (e_i^{-1}, g_{i} g)$.

\begin{figure}[htbp]\centering
  \begin{tikzpicture}
    \tikzstyle{real}=[inner sep=1.5pt, fill=black, circle]
    \tikzstyle{fib}=[inner sep=1.5pt, fill=black, circle]

    \node[real, label=right:\small{$b$}] (1) at (0,0) {};
    \node[real,label=above:\small{$a$}] (2) at (0,1) {};
    \node[real,label=below:\small{$c$}] (3) at (-.5,-1) {};
    \draw[thick,color = red] (1)--(2);
    \draw[thick,color = blue] (1)--(3);
    \draw[thick,color = green] (3)--(2);

 \coordinate (shift) at (3.5,0);
    \begin{scope}[shift=(shift)]
\node[fib,label=right:\small{$b1$}] (11) at (0,0) {};
    \node[fib,label=above:\small{$a1$}] (21) at (0,1) {};
    \node[fib,label=below:\small{$c1$}] (31) at (-.5,-1) {};

\end{scope}

 \coordinate (shift) at (6,0);
    \begin{scope}[shift=(shift)]
\node[fib,label=right:\small{$b2$}] (12) at (0,0) {};
    \node[fib,label=above:\small{$a2$}] (22) at (0,1) {};
    \node[fib,label=below:\small{$c2$}] (32) at (-.5,-1) {};

\end{scope} 

 \coordinate (shift) at (8.5,0);
    \begin{scope}[shift=(shift)]
\node[fib,label=right:\small{$b3$}] (13) at (0,0) {};
    \node[fib,label=above:\small{$a3$}] (23) at (0,1) {};
    \node[fib,,label=below:\small{$c3$}] (33) at (-.5,-1) {};

\end{scope} 

    \draw[thick,color = red] (11)--(21);
    \draw[thick ,color = blue] (11)--(32);
    \draw[thick ,color = green] (31)--(21);

    \draw[thick ,color = red] (12)--(23);
    \draw[thick ,color = blue] (12)--(33);
    \draw[thick ,color = green] (32)--(23);

    \draw[thick ,color = red] (13)--(22);
    \draw[thick ,color = blue] (13)--(31);
    \draw[thick ,color = green] (33)--(22);

    \draw[thick,dashed,color = black!30] (1)--(13);
\draw[thick,dashed,color = black!30] (2)--(23);
\draw[thick,dashed,color = black!30] (3)--(33);

  \end{tikzpicture}
  \caption{\label{fi:lift} A base graph on the left and a $3$-cover on the right. Each vertex of the base graph has a discrete fiber of $3$ elements. The permutation of each edge encodes how elements are connected. }
\end{figure}
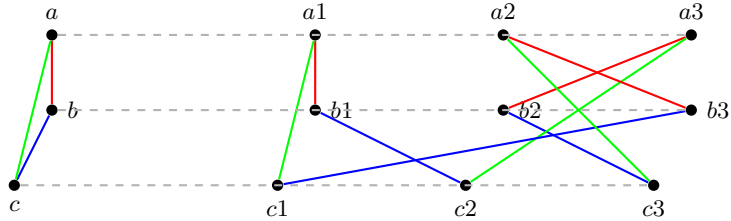

These constructions generalize the Schreier and Cayley graphs of $\mathrm{F}_d$ associated to the free generators and their inverses $S = \{g_1,\ldots,g_d, g^{-1}_1,\ldots, g_d ^{-1}\}$ (consider a graph $G$ with a single vertex and $d$ loop edges). We may also replace $\mathrm{F}_d$ by any finitely generated group $\Gamma$, a representation $\rho_n : \Gamma \to \mathrm{S}_n$ and a symmetric generating set $S$ in bijection with the edge set $E$ of the marked graph $G$. The maximal Abelian cover is for example obtained by considering $\Gamma = \dZ^d$ and its natural generators. Let us call $(\Gamma,S)$-cover of $G$ and $(\Gamma,S,\rho_n)$-cover of $G$ the corresponding graphs.  As a continuation of Lemma \ref{le:BSsch}, we have the following.

\begin{lemma} \label{le:BSsch2}
Let $G = (V,E,\xi)$ be a finite connected marked graph with a $(\Gamma,S)$-cover and $\rho_n : \Gamma \to \mathrm{S}_n$ a subsequence of permutations representation of $\Gamma$. Statements \ref{iBS}-\ref{iiiBS} in Lemma \ref{le:BSsch} imply that
\begin{enumerate}[label = (\roman*)]
\setcounter{enumi}{3}
\item \label{ivBS} The $(\Gamma,S,\rho_n)$-cover of $G$ converges in the BS sense toward the (connected component) of the $(\Gamma,S)$-cover of $G$ rooted at $(o,\ee)$ where $o$ is uniform on $V$.  
\end{enumerate}
\end{lemma}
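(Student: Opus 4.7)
The goal is to show that for every fixed radius $r$, the $r$-ball around a uniformly sampled vertex of the $(\Gamma,S,\rho_n)$-cover converges in distribution (as an unlabelled rooted marked graph) to the $r$-ball around $(o,\ee)$ in the $(\Gamma,S)$-cover, with $o$ uniform on $V$. By the definition of BS convergence through cylinder events, this is exactly what has to be checked.

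First, I would reduce to a single base vertex. Writing the vertex set of the $(\Gamma,S,\rho_n)$-cover as $V\times\{1,\dots,n\}$, sampling uniformly amounts to picking $v$ uniform in $V$ and, independently, $x$ uniform in $\{1,\dots,n\}$; so it suffices to fix $v\in V$ and analyse the $r$-ball around $(v,x)$ for uniform $x$. Next, I would use the group-theoretic description: a walk of length $k$ in $G$ starting at $v$ along edges $e_{i_1},\dots,e_{i_k}$ carrying generators $s_{i_1},\dots,s_{i_k}\in S$ lifts, starting from $(v,\ee)$, to a walk ending at $(v',s_{i_k}\cdots s_{i_1})$ in the $(\Gamma,S)$-cover, and starting from $(v,x)$, to a walk ending at $(v',\rho_n(s_{i_k}\cdots s_{i_1})(x))$ in the $n$-cover. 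In particular the two $r$-balls are isomorphic as rooted marked graphs whenever no spurious collisions appear, i.e.\ whenever distinct pairs $(v',g_1)\ne(v',g_2)$ arising in the $(\Gamma,S)$-cover do not get identified in the $n$-cover.

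The spurious collision event is then
\begin{equation*}
B_r(v,x)=\bigl\{\exists\,g_1,g_2\in\Gamma,\ g_1\ne g_2,\ g_1,g_2\in B_\Gamma(\ee,r),\ \rho_n(g_1)(x)=\rho_n(g_2)(x)\bigr\},
\end{equation*}
where $B_\Gamma(\ee,r)$ is the ball of radius $r$ in the word metric on $\Gamma$ associated with $S$. Setting $F_r=\{g_1^{-1}g_2:g_1,g_2\in B_\Gamma(\ee,r),\ g_1\ne g_2\}\subset\Gamma\setminus\{\ee\}$, the event $B_r(v,x)$ occurs if and only if $x$ is a fixed point of some $\rho_n(g)$ with $g\in F_r$. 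Since $\Gamma$ is finitely generated, $B_\Gamma(\ee,r)$ and hence $F_r$ are \emph{finite}, and by hypothesis \ref{iiiBS} of Lemma~\ref{le:BSsch} we have $\TR(\rho_n(g))/n\to 0$ for each nonidentity $g$. A union bound therefore gives
\begin{equation*}
\Pr_x\bigl(B_r(v,x)\bigr)\ \le\ \sum_{g\in F_r}\frac{\TR(\rho_n(g))}{n}\ \longrightarrow\ 0.
\end{equation*}
On the complementary event, the $r$-ball around $(v,x)$ in the $(\Gamma,S,\rho_n)$-cover is isomorphic as a marked rooted graph to the $r$-ball around $(v,\ee)$ in the connected component of $(v,\ee)$ in the $(\Gamma,S)$-cover. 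Averaging over $v\in V$ and over the uniform $x$ yields weak convergence of the neighbourhood distributions, which is BS convergence.

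The only delicate point is verifying that the coincidence set $F_r$ is finite and depends only on $r$ and $G$, not on $n$ or $x$; once this is isolated the argument is a one-line union bound against the hypothesis $\TR(\rho_n(g))/n\to 0$. The geometric content is really that $\rho_n$ having asymptotically trivial characters forces every bounded combinatorial pattern seen in the cover to agree with the one prescribed by the universal $(\Gamma,S)$-cover.
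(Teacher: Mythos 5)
The paper states Lemma~\ref{le:BSsch2} without proof (it is offered as a ``continuation'' of Lemma~\ref{le:BSsch}), so there is no internal argument to compare against; I will therefore assess your proof on its own merits.

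Your argument is correct in its essentials and is the natural one. The decomposition of the uniform vertex as $(v,x)$ with $v$ uniform on $V$ and $x$ uniform on $\{1,\dots,n\}$ (using $|V_n|=|V|\cdot n$), the identification of the map $\Psi\colon (v',g)\mapsto (v',\rho_n(g)(x))$ as a covering map from the $(\Gamma,S)$-cover onto the $(\Gamma,S,\rho_n)$-cover sending $(v,\ee)$ to $(v,x)$, and the union bound $\sum_{g\in F_r}\TR(\rho_n(g))/n\to 0$ over the finite coincidence set are all exactly right, and they do give BS convergence since here the mark set $S$ is finite and the degrees are bounded by $|S|$, so the local topology is determined by finite-radius ball isomorphism types.

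One small technical slip: ruling out collisions among group elements in $B_\Gamma(\ee,r)$ guarantees that $\Psi$ is a vertex bijection between the two $r$-balls, but it does not by itself rule out a ``spurious edge'' between two vertices on the boundary of the $r$-ball in the $n$-cover whose lift in the $(\Gamma,S)$-cover leaves the ball (it involves a group element of length $r+1$ being identified with one of length $\le r$). The standard fact is that a covering map restricted to the $(r+1)$-ball being injective on vertices is what ensures an isomorphism of the induced $r$-balls. So one should take the coincidence set to be, say, $F_{r+1}$ rather than $F_r$. This is harmless: $F_{r+1}$ is still finite and the same union bound applies, so the conclusion stands unchanged.
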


Hence, as discussed in Example \ref{ex:freeperm}, for any finite connected marked graph $G = (V,E,\xi)$, in probability, a uniformly sampled $n$-cover of $G$ BS-converges toward its universal cover $T_G$ rooted uniformly.

\section{Operators on unimodular graphs. }

\label{sec:op}

We now introduce operators defined on marked graphs. BS convergence offers a natural framework to study spectral properties of large graphs and their limits.

\subsection{Local operators for Schreier and covering graphs. }

As above let $\Gamma$ be a finitely generated group with unit $\ee$. 
Recall that the {\em group algebra} $\mathbb{C}[\Gamma]$ is the set of formal finite weighted sums of group elements: 
$$
a = \sum_{g \in \Gamma} a_g g ,
$$
where the support of $a$ is finite and $a_g \in \mathbb{C}$.  $\mathbb{C}[\Gamma]$ defines a unital $*$-algebra with unit $\ee$ by setting, for $a,b \in \mathbb{C}[\Gamma]$
$
a ^* = \sum_g \overline{a_g} g^{-1}$ 	and  $a.b  = \sum_{g,h} a_g b_h  (gh)$. 
The left-regular representation $\lambda : \Gamma \to \mathcal{B} (\ell^2(\Gamma))$ is defined by, for $g,h \in \Gamma$,  $\lambda(g) \delta_h = \delta_{gh}$ where $\delta_g \in \ell^2 (\Gamma)$ is the Dirac delta function.  This is a faithful unitary representation. For $a \in \mathbb{C}[\Gamma]$, the operator $\lambda(a) \in \mathcal{B}(\ell^2(\Gamma))$ is \begin{equation}\label{eq:deflambdaa}
\lambda(a) = \sum_g a_g \lambda(g).
\end{equation}

 If $S$ is the union of supports of $a$ and $a^*$ minus $\ee$, the operator $\lambda(a)$ can be interpreted as a local operator on $\CAY(\Gamma,S)$ where the edge $e =(s,g) \in S \times \Gamma $ receives a weight $a_s$. For example, if $a = \IND_S = \sum_{g \in S} g$ then $\lambda(a)$ is the adjacency operator of $\CAY(\Gamma,S)$. If $a  \in \dR_+ [\Gamma]$ and $\sum_g a_g = 1$, then $\lambda(a)$ is the transition kernel of the random walk on $\CAY(\Gamma,S)$ where the random walker jumps from $g$ to $s.g$ with probability $a_s$.

Similarly, if $\rho : \Gamma \to \mathrm{S}_V$ is a permutation representation then 
$$
\rho(a) =  \sum_g a_g \rho(g) \in \mathcal B (\ell^2 (V))
$$
is the corresponding local operator on $\SCH(\Gamma,S,\rho)$. We have that $\rho(a)$ and $\lambda(a)$ are self-adjoints when $a = a^*$.

The positive faithful state $\tau$ on $\mathbb{C}[\Gamma]$ is $\tau(a) = a_{\ee} = \langle \delta_{\ee} , \lambda(a) \delta_{\ee} \rangle$. If along a subsequence of integers, $\rho_n$ is a permutation representation on $V_n =  \{1,\ldots,n\}$, then we can reinterpret Lemma \ref{le:BSsch}\ref{iiiBS} as the {\em convergence in distribution} of $\rho_n$ toward $\lambda$:
\begin{equation}\label{eq:WCGr}
\lim_{n\to \infty} \frac {1} {n} \TR (\rho_n(a) ) = \tau(a), \quad \hbox{  for all $a \in \dC[\Gamma]$.} 
\end{equation}

Let $G = (V,E,\xi)$ be a finite marked graph. Let $S$ be a symmetric set of generators of $\Gamma$ and isomorphic to $E$, that is, there is a bijection  $\iota : S \to E$ such that $\iota(s^{-1}) = \iota(s)^{-1}$. Local operators on the $(\Gamma,S)$-cover of $G$ and the $(\Gamma,S,\rho_n)$-cover of $G$ can also be written in the group algebra as follows. For $u,v \in V$, let $E_{uv} \in M_V(\dC)$ be the matrix where all entries are $0$ but entry $(u,v)$ equal to one. Then, the adjacency operator of $G$ can be written as 
$$
A = \sum_{g \in S} E_{v_g u_g},
$$
where if $g \in S$ and $e  = \iota (g) \in E$ we have set $u_g = e_-$, $v_g = e_+$.  
If $\rho_1$ is the trivial representation $\rho_1 : g \mapsto 1$ of dimension $1$, we could thus write 
$
A = \rho_1(\IND_E),
$
where 
$$
\IND_E = \sum_{g \in S} E_{v_g u_g} g \;  \in M_V(\dC) [\Gamma].
$$
Similarly, the adjacency operator of the $ (\Gamma,S,\rho_n)$-cover of $G$ is
\begin{equation}\label{eq:AN}
A_n = \rho_n( \IND_E ) = \sum_{g \in S} E_{v_g u_g} \otimes \rho_n( g)  \; \in M_V (\mathbb{C}) \otimes M_n ( \mathbb C)
\end{equation}
Importantly, the operator $A_n$ restricted to $\mathbb C^V \otimes \mathrm 1$ coincides with $A$. The space $\mathbb C^V \otimes \mathrm 1$ is the set of vectors that are constant in each fiber above the vertices of $G$.  If $\lambda$ is the left regular representation of $\Gamma$ then 
\begin{equation}\label{eq:AF}
\lambda( \IND_E ) = \sum_{g \in S}  E_{v_g u_g} \otimes \lambda(g)  \; \in M_V (\mathbb{C}) \otimes \mathcal B (\ell^2 (\Gamma)),
\end{equation}
is the adjacency operator of the $(\Gamma,S)$-cover of $G$. As explained above, if $\Gamma  = \mathrm{F}_d$ and $S$ the free generators, $\lambda( \IND_E )$ is the adjacency operator of $|V|$ copies of the universal cover of $G$. More generally, for $a \in M_V(\dC) [\Gamma]$, we set
\begin{equation}\label{eq:ANF}
\rho_n(a) = \sum_{g} a_g \otimes \rho_n(g) \quad \hbox{ and } \quad \lambda(a) = \sum_{g} a_g \otimes \lambda(g), 
\end{equation}
could describe a local operator on the $ (\Gamma,S,\rho_n)$-cover or the $ (\Gamma,S)$-cover of $G$, see also \cite{zbMATH01421105,9317918}. We note finally that $\rho_n(a)$ and $\lambda(a)$ are self-adjoint when $a = a^* = \sum_g a^*_g g^{-1}$.

\subsection{Local operators: general case. }

As explained in \cite{MR2354165,MR2593624}, the above unital $*$-algebra of operators equipped with its faithful normal tracial state has an analog for unimodular measures $\mu \in \cP(\cGr)$. We will not give here the precise definition here. Let us simply mention that it contains the following family of {\em local operators}.

We introduce a slight variation of the sets $\cGr$ and $\cGe$ and let $\cGrr$ be the set of  unlabeled locally finite connected marked graphs with two ordered root vertices $(\rootm,\rootp)$. We equip  $\cGrr$ with the local topology. Let $a : \cGrr \to \dC$ be a continuous function with bounded support in the sense that, for some $h \geq 0$, $a (G,\rootm,\rootp) = 0$  whenever the graph distance between $\rootm$ and $\rootp$ is larger than $h$. Let $G = (V,E,\xi)$ be a locally finite marked graph.  We define the operator $A = A_{G,a}$ for all compactly supported $\varphi \in \ell^2 (V)$ by: 
\begin{equation}\label{eq:defAloc}
A \varphi   (v) = \sum_{u \in V} a (G^{(uv)}) \varphi(u) \quad \hbox{ for all $v \in V$},
\end{equation}
where by convention $a(G^{(vu)}) = 0$ if $u$ and $v$ are not in the same connected component and otherwise $G^{(uv)} \in \cGrr$ is the unlabeled marked graph of the connected component of $u$ and $v$ with roots $(u,v)$. For example, if the mark space is $\dC$ and
\begin{equation}\label{eq:adj}
a (G, \rootm,\rootp) = \sum_{e \in E: e_{\pm} = \rootpm} \xi(e)
\end{equation}
 then $A$ is the {\em weighted adjacency operator} of $G$. In the above group case, by transitivity, we can identify the operator $A$ in \eqref{eq:defAloc} with $\lambda(a)$ for some $a \in \dC[\Gamma]$.  Note that for $\varphi \in \ell^2 (V)$ compactly supported,  $A \varphi  $ is indeed in $\ell^2(V)$ because $G$ is locally finite. If the degree of $G$ is not uniformly bounded, $A$ may however be an unbounded operator (the adjacency operator for example). Some care is then  necessary to define the domain  of $A$. Note that $A$ is symmetric if $a$ is symmetric, that is if for all $(G,\rootm,\rootp) \in \cGrr$, we have $a (G,\rootm,\rootp ) = \overline a (G,\rootp,\rootm)$.

When $\mu \in \cP(\cGr)$ is unimodular and $(G,o)$ has distribution $\mu$, then the above function $a : \cGrr \to \dC$ is an element (or rather affiliated) to the von Neumann algebra defined in \cite{MR2354165,MR2593624} whose state is $\tau(a) = \dE_{\mu} \langle \delta_o , A_{G,a} \delta_o \rangle$. A first consequence of unimodularity is the following, see \cite[Proposition 2.2]{zbMATH06902684} (written for the adjacency operator).
\begin{lemma}\label{le:nelson}
Let $\mu \in \cP(\cGr)$ be unimodular and $(G,o)$ with distribution $\mu$. Let $a : \cGrr \to \dC$ be a symmetric continuous function with bounded support. Then with $\mu$-probability one, $A_{G,a}$ is essentially self-adjoint (that is, has a unique self-adjoint extension). 
\end{lemma}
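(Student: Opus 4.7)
Because $a$ has bounded support (vanishing for $d(\rootm, \rootp) > h$), the operator $A = A_{G,a}$ preserves the dense subspace $\mathcal{C}_c(V)$ of compactly supported vectors: for each $v$, the sum $A\delta_v = \sum_{u : d(u,v) \le h} a(G^{(vu)})\,\delta_u$ is finite. The symmetry assumption on $a$ gives $\langle \psi, A\varphi\rangle = \langle A\psi, \varphi\rangle$ for $\psi,\varphi \in \mathcal{C}_c(V)$, so $A$ is densely defined and symmetric. It then suffices to show that the deficiency subspaces $\ker(A^* \mp iI)$ vanish $\mu$-almost surely. My plan is to realize $A_{G,a}$ as a fiber of a canonical self-adjoint operator affiliated to a tracial von Neumann algebra built from $\mu$, and then transfer self-adjointness to fibers by disintegration.

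First, I would assemble the tracial algebra in the spirit of \cite{MR2354165,MR2593624}. Let $\mathcal{A}$ be the $*$-algebra of continuous bounded-support kernels $b : \cGrr \to \dC$, with composition defined so that $A_{b_1 b_2} = A_{b_1} A_{b_2}$ (a finite sum by the support hypothesis) and involution $b^*(G, \rootm, \rootp) := \overline{b(G, \rootp, \rootm)}$. Set
\[
\tau(b) \;:=\; \dE_\mu\bigl[b(G, o, o)\bigr] \;=\; \dE_\mu \bigl\langle \delta_o,\, A_{G,b}\,\delta_o\bigr\rangle.
\]
The trace property $\tau(b_1 b_2) = \tau(b_2 b_1)$ is a direct application of the mass-transport identity \eqref{eq:defunimod} (in its standard extension from edges to arbitrary vertex pairs) to the function $(G, u, v) \mapsto b_1(G, v, u)\, b_2(G, u, v)$, while faithfulness is immediate from $\tau(b^*b) = \dE_\mu \sum_u |b(G, u, o)|^2$. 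The GNS construction then produces a tracial von Neumann algebra $\mathcal{N}(\mu)$ acting on $H(\mu) := L^2\bigl(\mu;\,\ell^2(V(G))\bigr)$, with cyclic tracial vector $\xi : (G, o) \mapsto \delta_o$; the global action is literally the direct integral of the fiber operators $A_{G,b}$.

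With this structure, the symmetric element $a = a^*$ defines a self-adjoint operator $\mathbf{A}$ affiliated to $\mathcal{N}(\mu)$ via the spectral theorem for unbounded affiliated operators in a tracial von Neumann algebra (Nelson's non-commutative integration). Since the fibers $\ell^2(V(G))$ of the direct integral are $\mathbf{A}$-invariant decomposable subspaces, the standard disintegration theory of unbounded decomposable self-adjoint operators (Dixmier) implies that $A_{G,a}$ is essentially self-adjoint on $\mathcal{C}_c(V(G))$ for $\mu$-almost every $(G, o)$, which is the claim.

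\textbf{Main obstacle.} The central technical point is the construction of the affiliated self-adjoint extension $\mathbf{A}$ together with the verification that the $\mathcal{C}_c$-sections form a core. Unboundedness is genuine: the kernel values $a(G^{(uv)})$ need not be uniformly bounded in $(u,v)$ — e.g.\ for the weighted adjacency operator \eqref{eq:adj} when conductances or vertex degrees are unbounded — so one cannot avoid the theory of unbounded affiliated operators and fall back on bounded von Neumann calculus. Unimodularity plays two essential roles in the argument: it makes $\tau$ a trace rather than merely a state, and it makes the representation of $\mathcal{N}(\mu)$ direct-integral decomposable over the fibers, which is precisely what enables the passage from global self-adjointness of $\mathbf{A}$ to $\mu$-almost sure fiberwise essential self-adjointness of $A_{G,a}$.
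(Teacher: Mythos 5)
Your proposal correctly identifies the operator-algebraic framework the paper cites (via \cite{MR2354165,MR2593624}): a tracial von Neumann algebra $\mathcal{N}(\mu)$ built from the unimodular measure, Murray--von Neumann theory of operators affiliated to a finite von Neumann algebra, and disintegration over the direct integral $L^2(\mu;\ell^2(V))$. The derivation of the trace property from the mass-transport identity \eqref{eq:defunimod} is correct, and the reduction to vanishing of deficiency spaces is the right opening move. However, the load-bearing step of the argument is asserted rather than proven.

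You write that the symmetric element $a=a^*$ ``defines a self-adjoint operator $\mathbf{A}$ affiliated to $\mathcal{N}(\mu)$ via the spectral theorem for unbounded affiliated operators''. This is circular: the spectral theorem presupposes self-adjointness, which is precisely what must be established. The fact you actually need is that a \emph{closed symmetric} operator \emph{affiliated} to a \emph{finite} von Neumann algebra is automatically self-adjoint (Murray--von Neumann; see also Nelson's non-commutative integration). To invoke it you must first prove that the closure $\overline{\mathbf{A}}$, taken on the domain of sections with finite fiberwise support, is affiliated to $\mathcal{N}(\mu)$ — that is, it commutes with every unitary in the commutant $\mathcal{N}(\mu)'$. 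This verification is where unimodularity does real work at the operator level, beyond making $\tau$ a trace, and it does not appear in your proposal. Two further technical points are also glossed over. First, the GNS construction applied to a $*$-algebra whose elements act as \emph{unbounded} operators does not directly yield a von Neumann algebra; $\mathcal{N}(\mu)$ must be generated by bounded local operators, with the unbounded kernel $a$ then shown to be affiliated to the resulting algebra. Second, the disintegration step needs the identity $\mathbf{A}^* = \int^{\oplus} A_{G,a}^*\,d\mu(G)$ (adjoints of decomposable operators disintegrate fiberwise); without it, knowing ``each fiber of the self-adjoint $\overline{\mathbf{A}}$ is self-adjoint'' only gives a.s.\ existence of a self-adjoint extension of $A_{G,a}$, not that $\mathcal{C}_c(V(G))$ is a core. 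In short, the architecture is sound, but the crucial beam — affiliation of $\overline{\mathbf{A}}$ together with the finite-von-Neumann-algebra self-adjointness theorem, rather than the spectral theorem — is not in place.
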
  

Thanks to this result, we can inquire about the spectral decomposition of symmetric local operators for unimodular random rooted marked graphs. 

\subsection{BS Continuity of the average spectral measure. }

Fix a symmetric continuous function $a : \cGrr \to \dC$ with bounded support. Let $G = (V,E,\xi)$ be a locally finite marked graph. Assume that $A = A_{G,a}$ defined by \eqref{eq:defAloc} extends to a self-adjoint operator with domain $D(A) \subset \ell^2(V)$. Recall from the spectral theorem that for any $\varphi \in D(A)$ of unit norm, the exists a probability measure $m_{A}^\varphi \in \cP(\dR)$, the {\em spectral measure at vector $\varphi$} such that for any bounded continuous functions $f$, 
$$
\langle \varphi , f(A)\varphi \rangle = \int f (\lambda) d m_A ^{\varphi} (\lambda).
$$ 
If $E$ is the resolution of the identity for $A$, then $ d m_A ^{\varphi} (\lambda) = \langle \varphi , dE (\lambda)\varphi \rangle$. If $G$ is finite then 
$m_A^{\varphi} = \sum_k \delta_{\lambda_k} | \langle\varphi , \psi_k \rangle |^2, $
where $(\lambda_k)$ are the eigenvalues (counting multiplicities) and $(\psi_k)$ is orthornormal basis of eigenvectors of $A$ and we denote the empirical distribution of eigenvalues by 
$$
m_A = \frac{1}{|V|} \sum_{k = 1}^{|V|} \delta_{\lambda_k}.
$$ 
The spatial average of spectral measures is the empirical distribution of eigenvalues: 
$$
\dE_{U(G)} [  m_A^{\delta_o} ] = \frac{1}{|V|}  \sum_{v \in V}  m_A^{\delta_v} = m_A. 
$$
More generally, if $\mu \in \cP(\cGr)$ is unimodular, in view of Lemma \ref{le:nelson}, we can define the {\em average spectral measure} as 
$$
m_{\mu,a} = \dE_{\mu} [ m^{\delta_o}_{A_{G,a}} ].
$$
In particular, for finite $G$, $m_{U(G),a}  = m_{A_{G,a}}$. In the group case $\Gamma$ with unit $\ee$ and symmetric $a \in \mathbb C[\Gamma]$, we simply write $m_a$ for the spectral measure $m_{\lambda(a)}^{\delta_{\ee}}$. 
The following theorem asserts that the average spectral measure is continuous for the BS convergence.
\begin{theorem}\label{th:BScontspec}
Let $a : \cGrr \to \dC$ be a symmetric continuous function  with bounded support. Let $(G_n)$ be sequence of finite marked graph such that $(G_n) \toBS \mu \in \cP(\cGr)$. Then, for the weak topology on $\cP(\dR)$,
$
m_{A_{G_n,a} } \to m_{\mu,a}.
$
Moreover, if $a$ takes values in $\dZ$, we have for any $\lambda \in \dR$, 
$
 m_{A_{G_n,a} } (\{\lambda \} ) \to  m_{\mu,a}( \{\lambda \}).
$
\end{theorem}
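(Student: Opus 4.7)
The plan is to reduce the weak convergence to convergence of Cauchy--Stieltjes transforms. For $z\in\dC\setminus\dR$ and a rooted marked graph $(G,o)\in\cGr$ on which $A_{G,a}$ is essentially self-adjoint, set
$g(G,o;z) = \langle \delta_o,(A_{G,a}-z)^{-1}\delta_o\rangle$.
By Lemma~\ref{le:nelson} this is well defined $\mu$-almost surely, and for each finite $G_n$ it is defined at every vertex. It satisfies the uniform bound $|g|\leq 1/|\mathrm{Im}\, z|$, and summing over the diagonal gives $\int(\lambda-z)^{-1}\,dm_{A_{G_n,a}}(\lambda) = \dE_{U(G_n)}[g(G_n,o;z)]$, with the analogous identity for the Stieltjes transform of $m_{\mu,a}$ under $\mu$. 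Thus once I show that $g(\cdot;z)$ is a bounded continuous function on $\cGr$, BS convergence yields pointwise convergence of the Stieltjes transforms, and the classical continuity theorem for Stieltjes transforms delivers $m_{A_{G_n,a}}\to m_{\mu,a}$ weakly on $\cP(\dR)$.

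The technical core is the continuity of $(G,o)\mapsto g(G,o;z)$, which I would establish by strong resolvent convergence. If $(G_n,o_n)\to(G,o)$ in the local topology, representatives can be chosen so that the balls of radius $r$ around the roots are isomorphic as marked graphs for each fixed $r$ and all $n$ large enough; this allows one to identify compactly supported vectors in $\ell^2(V_n)$ with their counterparts in $\ell^2(V)$. Since the kernel defining $a$ has bounded support $h$, the operators $A_{G_n,a}$ and $A_{G,a}$ agree on any given finitely supported vector as soon as $n$ is large, giving pointwise convergence $A_{G_n,a}\varphi\to A_{G,a}\varphi$ on a common core of finitely supported functions. Combined with essential self-adjointness (Lemma~\ref{le:nelson}), a Trotter--Kato criterion then yields strong resolvent convergence $(A_{G_n,a}-z)^{-1}\delta_{o_n}\to(A_{G,a}-z)^{-1}\delta_o$ in $\ell^2$; pairing with $\delta_{o_n}$ produces the desired continuity. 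The hard part here is setting up the Trotter--Kato step carefully for a sequence of varying Hilbert spaces identified only through local isomorphisms of balls; once this is in place, the uniform bound $|g|\leq 1/|\mathrm{Im}\,z|$ lets bounded convergence close the weak convergence argument.

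For the atom statement, one inequality is an immediate consequence of weak convergence: for any $\epsilon>0$ outside the countable set of atoms of $m_{\mu,a}$, the portmanteau theorem gives $\limsup_n m_{A_{G_n,a}}(\{\lambda\})\leq\lim_n m_{A_{G_n,a}}([\lambda-\epsilon,\lambda+\epsilon])=m_{\mu,a}([\lambda-\epsilon,\lambda+\epsilon])$, and sending $\epsilon\downarrow 0$ yields $\limsup_n m_{A_{G_n,a}}(\{\lambda\})\leq m_{\mu,a}(\{\lambda\})$. The matching lower bound is the main obstacle, and this is precisely where the integrality of $a$ enters: it forces $A_{G_n,a}$ to be an integer operator, so $m_{A_{G_n,a}}(\{\lambda\})$ reduces (after an appropriate shift, and noting that both sides vanish when $\lambda$ is not an algebraic integer) to the normalized dimension of the kernel of an integer-valued local operator. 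For such operators a L\"uck-type approximation theorem---due to L\"uck in the residually finite group setting and extended to unimodular random rooted graphs by Elek and by Ab\'ert--Thom--Vir\'ag---asserts that normalized kernel dimensions are BS-continuous. This supplies the missing $\liminf_n m_{A_{G_n,a}}(\{\lambda\})\geq m_{\mu,a}(\{\lambda\})$ and closes the proof.
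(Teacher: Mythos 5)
Your route to the first claim is a genuine alternative to the one the paper sketches. The paper's cited argument works through the moments $\langle \delta_v, A_{G,a}^k\delta_v\rangle$, which are local functions of $(G,v)$ and hence immediately give continuity of $(G,v)\mapsto m_{A_{G,a}}^{\delta_v}$ in the local topology when degrees are uniformly bounded (moment determinacy then finishes weak convergence). You instead go through the Stieltjes transform and strong resolvent convergence. That is a more robust choice: it sidesteps moment determinacy and is exactly what one has to do anyway for unbounded-degree limits, where the paper's reference makes ``slightly different assumptions.'' The price is that the Trotter--Kato step for a sequence of operators acting on different Hilbert spaces is real work; the standard way around is the identification via the local isometries on large balls around the roots you describe, together with the fact that finitely supported vectors form a common core thanks to Lemma~\ref{le:nelson}. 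You correctly flag this as the technical crux. Once $g(\cdot;z)$ is bounded, measurable, and continuous at $\mu$-almost every point, BS convergence plus the continuous mapping theorem gives convergence of the Stieltjes transforms, and both methods end in the same place.

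For the atom statement you and the paper invoke the same L\"uck approximation result of Ab\'ert--Thom--Vir\'ag, which already gives the two-sided limit; splitting off the upper bound via portmanteau is harmless but redundant. Be careful with the reduction to a kernel dimension, though: ``an appropriate shift'' $a\mapsto a-\lambda$ preserves integer-valuedness only when $\lambda\in\dZ$. For a non-rational totally real algebraic integer $\lambda$ one must instead pass to $q(a)$ with $q$ the minimal integer polynomial of $\lambda$, and then disentangle the contributions of the conjugate roots that also sit in $\ker q(A)$; and the vanishing of $m_{\mu,a}(\{\lambda\})$ for $\lambda$ not an algebraic integer is itself part of the conclusion of the approximation theorem rather than something one can assume upfront. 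These are details that the cited ATV theorem handles, but as written your sketch understates them.
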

See Figure \ref{figERREG} for an illustration. A proof of the first claim of this theorem can be found in \cite{zbMATH06902684,bordenave2024largedeviationsmacroscopicobservables} (with slightly different assumptions). It relies on the observation that $(G,v) \mapsto m_{A_{G,v}}^{\delta_v}$ is continuous for the local topology when $G$ has uniformly bounded degrees (since $\int \lambda^k dm_{A_{G,a}}^{\delta_v}(\lambda) = \langle \delta_v , A_{G,a}^k \delta_v \rangle$  is function of a finite neighborhood of $(G,v)$). The second claim is an instance of L\"uck approximation theorem due in this context to \cite{Abrt2014BenjaminiSchrammCA} (see \cite{zbMATH06902684} for graphs with non-uniformly bounded degrees). It has the striking consequence that if $(G_n) \toBS \mu \in \cP(\cGr)$ and $a$ takes values in $\dZ$ then $m_{\mu,a}(\{\lambda \}) >0$ implies that $\lambda$ is a totally-real algebraic integer.

\begin{figure}[htb]
\begin{center}
\includegraphics[height=4cm]{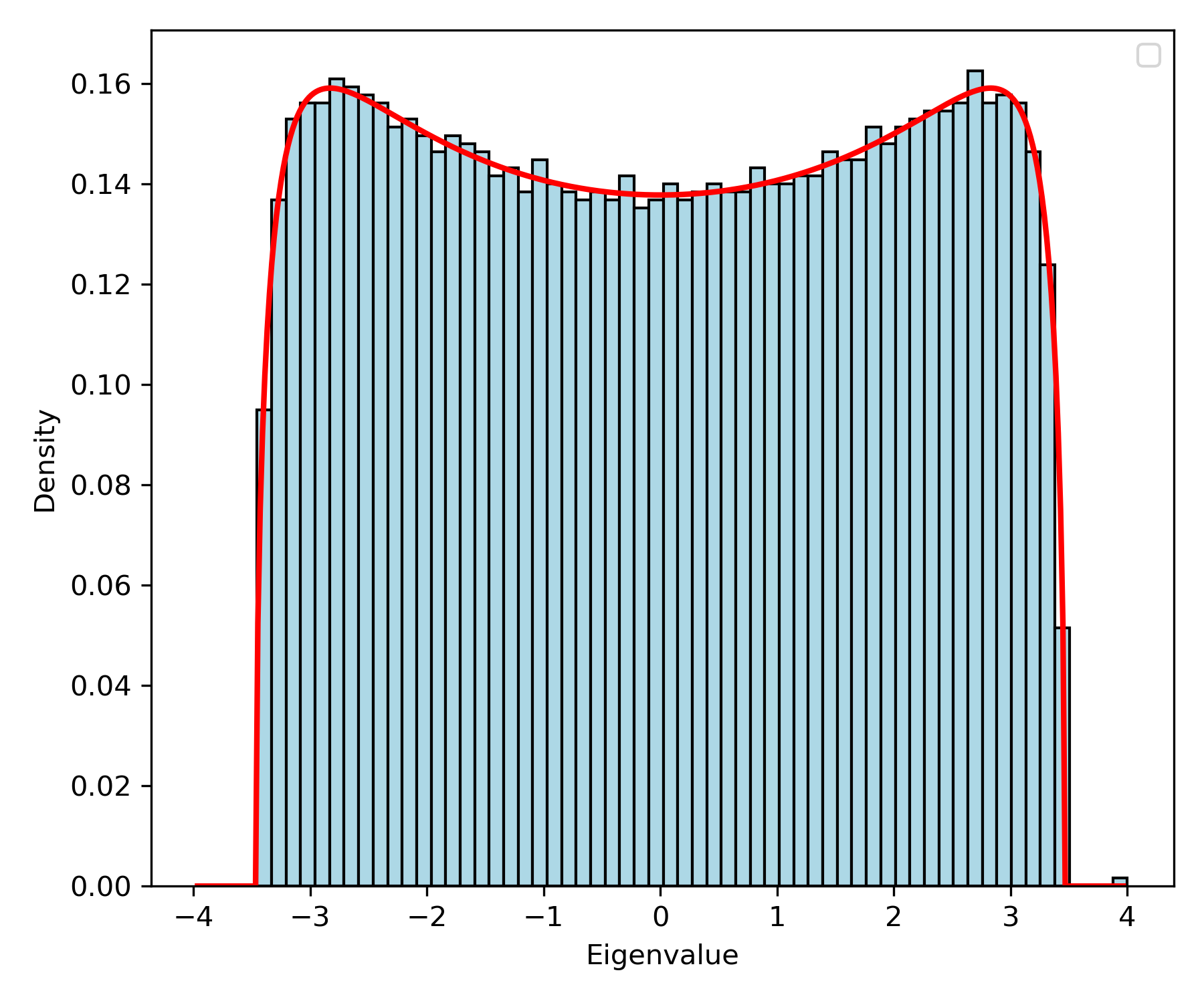} \hspace{2.5cm}\includegraphics[height=4cm]{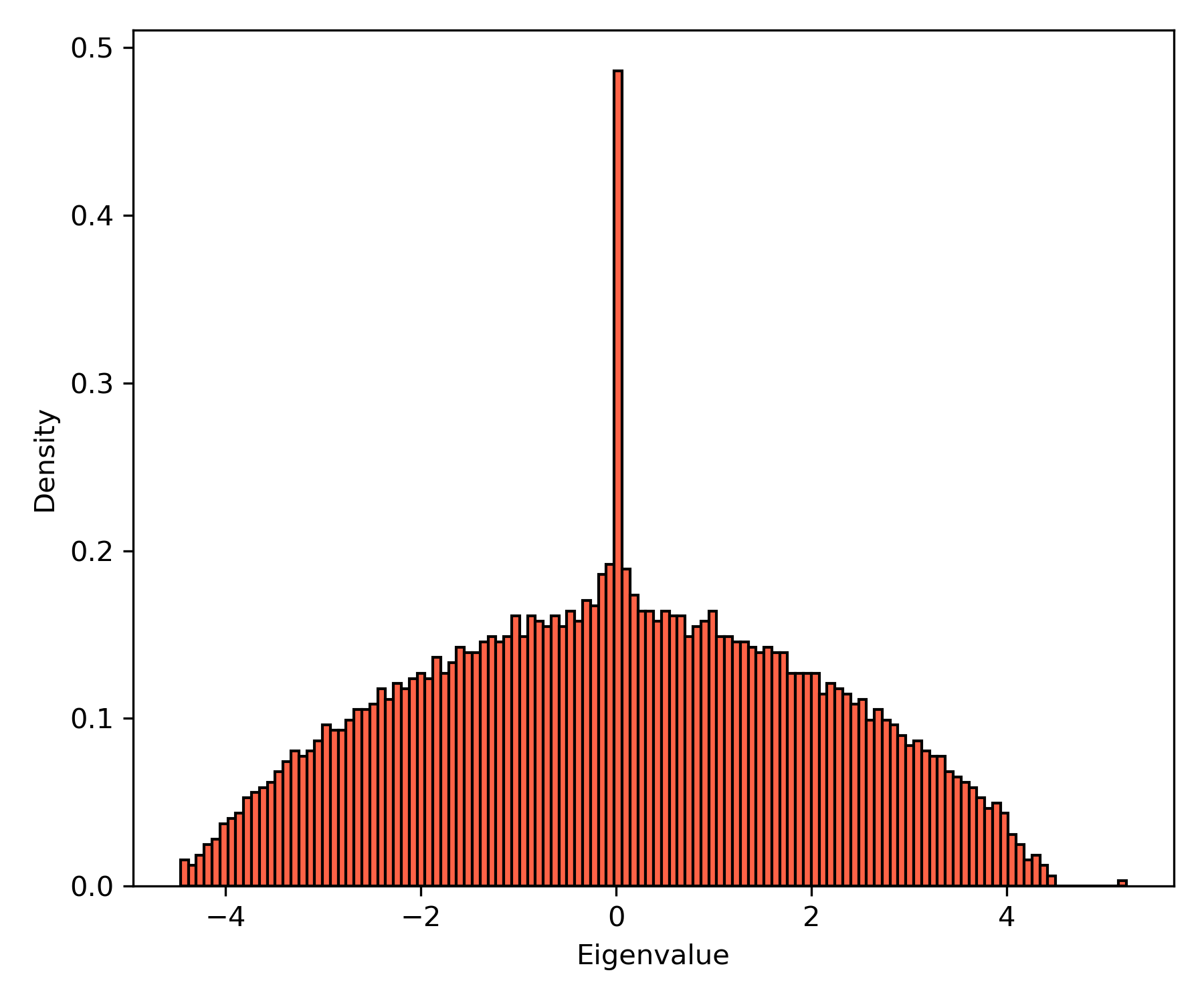}
\label{figERREG} \caption{Left: histogram of the eigenvalues of the adjacency operator of a uniformly random $4$-regular graph with $n= 4000$ vertices, in red the density of its limit \eqref{eq:kesten}. Right: histogram of the eigenvalues of the adjacency operator of an Erd\H{o}s-R\'enyi graph with $n = 4000$ vertices and edge probability $4/n$.}
\end{center}
\end{figure}

For classical random graphs sequences, there are many developments which go much beyond Theorem \ref{th:BScontspec}. They quantify the convergence of the spectrum toward its limit. We make no attempt here to present a fair account of the state of art. Let us mention that the sharpest results have been obtained for the adjacency operator of the uniform $d$-regular graphs on $n$ vertices. Notably, the typical position of the $i$-th eigenvalue (counted by non-increasing order) has been determined up to optimal order in breakthrough series of works, see \cite{zbMATH07793234,huang2025ramanujanpropertyedgeuniversality}  and references therein. 
The adjacency operator of random $d$-regular graph is invertible with high probability \cite{zbMATH07255586,zbMATH07467788}, the rank of the adjacency matrix for the Erd\H{o}s-R\'enyi ngraph is computed in \cite{MR2789584,arXiv:2303.05435}. In a different direction, when we remove the symmetric assumption, almost nothing is known about the convergence of the empirical distribution of the eigenvalues (now a probability measure on $\dC$). A remarkable exception is for the adjacency matrix of a directed Erd\H{o}s-Rényi graph on $n$ vertices with fixed average out/in degree \cite{sah2025limitingspectrallawsparse}.

\subsection{Spectral decomposition. }

Theorem \ref{th:BScontspec} calls for a general study of the spectral decomposition of the operator $A_{G,a}$ when $(G,o)$ is a unimodular random rooted graph.

For countable groups $\Gamma$ and $a \in \mathbb C[\Gamma]$ or $a \in M_n(\dC)[\Gamma]$ explicit formulas for the spectral measure $m_{a}$ are scarce, see \cite{zbMATH01402216,zbMATH07756921,zbMATH06902684}. For example, for $d \geq 2$, the spectral measure of the adjacency operator of the infinite $d$-regular tree (Cayley graph with free generators of: the free product of $\dZ_2$ with itself $d$ times, or -- for $d$ even -- the free group $\mathbb F_{d/2}$) is the Kesten-McKay measure with support $[-2\sqrt{d-1}, 2 \sqrt{d-1}]$ and density
\begin{equation}\label{eq:kesten}
m_{T_d} = \frac{d\sqrt{4 (d-1) - x^2 } }{2\pi (d^2 - x^2)} dx,
\end{equation}
see Figure \ref{figERREG}(left). For $d =2$, $T_2$ is an discrete line and $m_{T_2}$ is the arcsine distribution. The spectral density of the adjacency operator of $\dZ^d$ with its natural generators is the convolution of $m_{T_2}$ with itself $d$ times.

Interestingly, unlike these basic examples, atoms can appear for some symmetric $a \in \dC[\mathrm{F}_d]$. For lamplighter groups, the spectral measure of adjacency operators of Cayley graphs can be purely atomic \cite{MR1866850,MR2415315} or have a mixture of continuous and atomic parts \cite{zbMATH06821385} or be absolutely continuous (ac) \cite{arras2025randomschrodingeroperatorsconvolution}, see also \cite{MR1866850} and references therein. For the maximal Abelian covering of a finite graph, see  \cite{HIGUCHI2009570,li2025eigenvaluesmaximalabeliancovers}. Beyond this subtle zoology, there are few general results. A remarkable exception is \cite{zbMATH06346504}: for all Gromov-hyperbolic groups $\Gamma$ and symmetric $a \in \dR_+[\Gamma]$, the measure $m_a$ has square root type behavior  at the right edge of its support, as in \eqref{eq:kesten}.

Without surprises, for random unimodular graphs even fewer results are known. The study of the spectral decomposition of the adjacency operator of bond percolation in $\dZ^d$ (see Example \ref{exZd}) is sometimes referred as {\em quantum percolation} and was initiated by de Gennes, Lafore and Millot \cite{deGennes1959a} shortly after the celebrated work of Anderson \cite{PhysRev.109.1492} on eigenstates localization in disordered media.  It was proven in \cite{zbMATH06821385}  that the average spectral measure of the adjacency operator of bond percolation with parameter $p$ on $\dZ^2$ has a non-trivial continuous component for all $p > p_c =1/2$ (it is purely atomic for $p \leq p_c$). Proving the same claim above $p_c$ in higher dimension is an open problem. Nothing is known on eigenstates delocalization  for $d\geq 2$. 

More is known however on unimodular random trees. They are important as they appear as the BS limits of random graph ensembles. Recall from Example \ref{exER} that, with probability one, an Erd\H{o}s-R\'enyi random graph on $n$ vertices and edge probability $d/n$ converges as $n \to \infty$ to $T$, a Galton-Watson tree with offspring distribution $\POI(d)$. We illustrate  current known results on this sole example which captures essentially all possible pathological properties, see Figure \ref{figERREG} (right).  Recall that $T$ is infinite with positive probability if and only if $d >1$.

\begin{theorem}\label{th:GW}
Let $d >0$, $T$ be a Galton-Watson tree with $\POI(d)$ distribution  whose root is denoted by $o$, $A$ its adjacency operator and $m_d = \dE m_A^{\delta_o}$ its average spectral measure. There exist $1 \leq  d_1 \leq d_2$ such that 
\begin{enumerate}[label = (\roman*)]
\item (Average spectral measure: atoms) \label{GW1}
$m_d(\{\lambda \}) > 0$ if and only of $\lambda$ is a totally-real algebraic integer.  
\item (Dimension of kernel) \label{GW2} $
m_d (\{0 \}) = q + e^{-dq} + d q e^{-dq} -1
$, where $q \in (0,1)$ is the smallest root to $q = e^{-d e^{-dq}}$.

\item (Average spectral measure: continuous part) \label{GW3}
$m_d$ has a non-trivial continuous part if and only if $d >1$.

\item (Average spectral measure: ac part) \label{GW4}
$m_d$ has a non-trivial ac part for all $d > d_1$ and its total mass goes to $1$ as $d \to \infty$. Moreover, the limit
$
\lim_{t \to 0} m_d([-t,t]\backslash \{0\})/ t 
$
exists and is positive if and only if $d > e$.

\item (Spectral decomposition: atoms) \label{GW5}  For all $d> 1$, conditioned on $T$ being infinite, with probability one, there is an $\ell^2$-eigenvector with eigenvalue $\lambda$ for all totally-real algebraic integers.

\item (Spectral decomposition: ac) \label{GW6}
For all $d > d_2$, conditioned on $T$ being infinite, with probability one,  $A$ has non-trivial ac spectrum. Moreover the total mass of the ac part of  $m_A^{\delta_o}$ goes to $1$ in probability as $d \to \infty$.

\end{enumerate}

\end{theorem}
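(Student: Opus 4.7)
The unifying tool is the Green function recursion on the tree. Write $R(z) = \langle \delta_o, (A-z)^{-1}\delta_o\rangle$ and, for each child $v$ of the root, let $R^{*}_v(z)$ be the analogous resolvent on the subtree rooted at $v$ with the edge to $o$ removed. By infinite divisibility of $\POI(d)$, each cavity subtree is again a Galton-Watson tree with offspring law $\POI(d)$, and a Schur complement on the tree yields the recursive distributional equation
\begin{equation}\label{eq:plancavity}
R^{*}(z) \;\stackrel{d}{=}\; \frac{-1}{z + \sum_{i=1}^{N} R^{*}_i(z)}, \qquad N \sim \POI(d),
\end{equation}
with $R^{*}_i$ i.i.d.\ copies of $R^{*}$ independent of $N$, and $R(z) = -1/(z + \sum_{i=1}^{N_0} R^{*}_i(z))$ with $N_0 \sim \POI(d)$. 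Every claim is reduced to analysing $R(\lambda + i0^+)$: Stieltjes inversion recovers the density of the continuous part, and $\lim_{\eta \to 0^+} \eta \, \Im R(\lambda + i\eta)$ recovers the atomic mass.

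For \ref{GW1}, the ``only if'' direction is L\"uck approximation, applied as in Theorem~\ref{th:BScontspec}. For ``if'', given a totally-real algebraic integer $\lambda$, a classical construction produces a finite rooted tree $(F,v)$ and a function $\varphi$ on $F$ with $A_F\varphi = \lambda\varphi$ and $\varphi(v) = 0$; with positive probability a child of $o$ carries an isomorphic copy of $(F,v)$, and $\varphi$ extends by zero to an $\ell^2$-eigenvector of $A$ at $\lambda$. For \ref{GW2}, I would use the Bordenave-Lelarge-Salez identity $m_d(\{0\}) = 1 - 2\nu(d)$, where $\nu(d)$ is the asymptotic matching-number density; the Karp-Sipser leaf-removal fixed point for $\POI(d)$-Galton-Watson trees reduces exactly to the scalar equation $q = e^{-de^{-dq}}$, and substitution yields the stated formula.

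For \ref{GW3}, when $d \leq 1$ the tree is a.s.\ finite, so $m_d$ is purely atomic. When $d > 1$, a pendant-subtree/dimension-counting argument (Salez) bounds the total atomic mass by the probability that $o$ lies in a finite cluster, which is strictly less than one, forcing a nontrivial continuous part. For \ref{GW4} and \ref{GW6}, one studies the fixed point of \eqref{eq:plancavity} near the real axis. When $d$ is large, \eqref{eq:plancavity} is a perturbation of the deterministic regular-tree recursion whose resolvent has the Kesten-McKay density \eqref{eq:kesten}; a contraction/compactness argument in the space of laws on the upper half-plane with $\Im R^{*} > 0$ produces a non-atomic solution, yielding a nontrivial ac part whose mass tends to $1$ as $d \to \infty$. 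The threshold $d = e$ for the positive density at $0$ arises from linearizing the imaginary part of \eqref{eq:plancavity} around the pure-pole solution of \ref{GW2}: the branching factor of the linearized recursion crosses $1$ precisely at $d = e$. Finally \ref{GW5} uses the pendant-subtree construction of \ref{GW1}: conditionally on the tree being infinite, infinitely many vertex-disjoint isomorphic copies of $(F,v)$ occur almost surely by the branching renewal, each producing a linearly independent $\ell^2$-eigenvector at $\lambda$.

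The hardest step is \ref{GW6}: it demands upgrading a stationary solution of \eqref{eq:plancavity} in law into an almost-sure pointwise statement that $\Im R(\lambda + i0^+) > 0$ on a set of positive Lebesgue measure for a \emph{fixed} realization of $T$. Kotani-type tools from one-dimensional ergodic Schr\"odinger theory do not transfer directly to random trees, and one needs a delicate second-moment control on the Lyapunov behavior of the cavity iteration. The same mechanism is responsible for the sharpness of the $d = e$ threshold in \ref{GW4}, since identifying the branching factor of the linearized RDE precisely is what makes the critical value computable.
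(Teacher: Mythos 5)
The paper does not prove this theorem: it is a compendium of results from the literature, with each item attributed to a reference (Bordenave--Sen--Vir\'ag, Bordenave--Lelarge--Salez, Coste--Salez, Arras--Bordenave). So there is no in-paper proof to compare against; I can only assess whether your sketch is a plausible route to the cited results. Your organizing principle --- the resolvent recursive distributional equation \eqref{eq:plancavity} --- is indeed the central tool in all of those works, and your accounts of \ref{GW1}, \ref{GW2} and \ref{GW5} identify the correct mechanisms (L\"uck approximation, the pendant-subtree construction of compactly supported eigenvectors, the Karp--Sipser leaf-removal fixed point, branching renewal). For \ref{GW4} and \ref{GW6} you name the right objects (near-real-axis analysis of the RDE, linearization near the pole giving the $d=e$ threshold) but, as you acknowledge, essentially all of the analytic content is deferred, so these items read as citations rather than proofs.

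There is a concrete error in your argument for \ref{GW3}. You assert that the total atomic mass of $m_d$ is bounded above by the extinction probability $q = \mathbb{P}(|T|<\infty)$. This inequality goes the wrong way. On the event $\{|T|<\infty\}$ the spectral measure $m_A^{\delta_o}$ is purely atomic with total mass $1$, which already contributes $q$; and, conditionally on $T$ being infinite, the atomic mass at the root is \emph{strictly positive} with positive probability. For instance, consider a realization where $o$ has exactly two children $c_1,c_2$, with $c_1$ having a single leaf child $d_1$ and $c_2$ having a leaf child $d_2$ plus at least one further child carrying an infinite subtree; then $\psi = \delta_o - \delta_{d_1} - \delta_{d_2}$ satisfies $A\psi = 0$ and $\psi(o)\ne 0$, so $m_A^{\delta_o}(\{0\}) \ge 1/3 > 0$ on this positive-probability event. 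Hence the total atomic mass strictly exceeds $q$, and the proposed bound cannot close the argument. The actual argument in the cited reference is quite different: one shows that the point spectrum of the adjacency operator on a unimodular Galton--Watson tree is generated by finitely supported eigenvectors, and then uses a dimension-counting (rank/self-adjointness) argument to bound the expected mass of $\delta_o$ on the closure of that span strictly below $1$ when $d>1$. Without that step, \ref{GW3} remains unproved in your sketch.
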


Statements \ref{GW1}-\ref{GW5} are consequences of \cite{zbMATH06411477}, see also \cite{zbMATH07174134}. The formula \ref{GW2} is established in \cite{MR2789584}. Statement \ref{GW3} is proved in \cite{zbMATH06821385}. Statements for $d >  d_i$ in \ref{GW4}-\ref{GW6} are in \cite{zbMATH07746823} while the threshold at $d =e$ in \ref{GW4} is in \cite{zbMATH07367528}. It is unclear what could possibly be the minimal values for $d_1$ and $d_2$ in statements \ref{GW3}-\ref{GW5}, see discussion in  \cite{zbMATH07746823}. The presence or not of singular continuous part in $m_d$ or in the spectrum of $A$ is a very interesting open problem. 

\subsection{Non-backtracking operator. }

We conclude this section by the introduction of a new operator which plays an important role in spectral graph theory. Let $G = ( V,E) $ be a locally finite graph. The {\em Hashimoto non-backtracting operator} of $G$ is the operator defined on compactly supported $\varphi \in \ell^2 (E)$ by the formula: 
\begin{equation}\label{eq:defB0}
B \varphi (f)  = \sum_{e \in E :  e \toNB f} \varphi(e),
\end{equation}
where $e \toNB f$ means $e_+ = f_-$ and $e \ne f^{-1}$ (the non-backtracking condition). The operator $B$ is not symmetric. 

More generally, assume that $G = ( V,E,\xi) $ is a locally finite graph and that the mark space is $\dC$. Its non-backtracking operator $B$ is defined on compactly supported $\varphi \in \ell^2 (E)$ by: 
\begin{equation}\label{eq:defB}
B \varphi (f)  = \sum_{e \in E :  e \toNB f} \xi(e) \varphi(e),
\end{equation}
We have $
B = \bar B D,
$
where $D$ is the diagonal operator $D \varphi(e) = \xi(e) \varphi(e)$ and $\bar B$ is the non-backtracking operator of $\bar G = (V,E)$.  $B$ is a bounded operator on $\ell^2(E)$ when $ \sup_{v \in V} \sum_{e \in E^+_v} |\xi(e^{-1})|^2 < \infty$.

There are correspondences between the spectra of weighted adjacency operators as defined in \eqref{eq:defAloc}-\eqref{eq:adj}  and non-backtracking operators \eqref{eq:defB}. For simplicity, let us first discuss the case of finite graphs without marks. There is a celebrated determinant identity originating from Ihara \cite{MR0223463} and later formalized in this form \cite{MR1194071,MR1749978}. 

\begin{theorem}[Ihara-Bass Formula]\label{th:IB}
Let $G = (V, E)$ be a finite graph, and let $A$, $B$ and $D$ be its adjacency, non-backtracking and degree diagianol operators ($D \varphi(v) = \deg(v) \varphi(v)$). For all $z \in \mathbb{C}$, we have:
\begin{equation}\label{eq:IharaBass}
\det(z 1_{E} - B) = (z^2 - 1)^{\chi - 1} \det(z^2 1_V - z A + D - 1_V),
\end{equation}
where $1_V$ and $1_{E}$ are the identity operators on $\mathbb{C}^V$ and $\mathbb{C}^{E}$, respectively, and $\chi = |E|/2 - |V| + 1$ is the Euler characteristic of the graph.
\end{theorem}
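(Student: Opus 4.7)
The plan is to exhibit an auxiliary $(|V|+|E|) \times (|V|+|E|)$ block matrix $\Phi(z)$ whose determinant can be evaluated two ways via Schur complements, one producing $\det(z 1_{E} - B)$ and the other the right-hand side of \eqref{eq:IharaBass}.

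First I would introduce the incidence operators $S,T : \ell^2(E) \to \ell^2(V)$ defined by $(S\varphi)(v) = \sum_{e_+=v}\varphi(e)$ and $(T\varphi)(v) = \sum_{e_-=v}\varphi(e)$, together with the edge-reversal involution $J$ on $\ell^2(E)$ given by $(J\varphi)(e) = \varphi(e^{-1})$. A direct unpacking of definitions establishes the key algebraic identities
$$ST^* = A, \qquad SS^* = D, \qquad JT^* = S^*, \qquad J^2 = 1_E, \qquad T^*S = B + J.$$
The last identity is the crucial link between $B$ and the vertex-side operators: one computes $(T^*S\varphi)(f) = \sum_{e : e_+ = f_-} \varphi(e)$, and this sum differs from $(B\varphi)(f)$ only by the inclusion of the term $e = f^{-1}$, which contributes $\varphi(f^{-1}) = (J\varphi)(f)$.

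Next I would consider
$$\Phi(z) = \begin{pmatrix} 1_V & S \\ T^* & z 1_E + J \end{pmatrix}.$$
Schur complement in the upper-left block gives $\det\Phi(z) = \det(z 1_E + J - T^*S) = \det(z 1_E - B)$. For the lower-right Schur complement, the identity $J^2 = 1_E$ yields $(z 1_E + J)^{-1} = (z 1_E - J)/(z^2 - 1)$, and then
$$S(z 1_E + J)^{-1} T^* = \frac{z\, S T^* - S J T^*}{z^2 - 1} = \frac{zA - D}{z^2 - 1},$$
using $SJT^* = SS^* = D$. Since $J$ is an involution with $|E|/2$ eigenvalues equal to $+1$ and $|E|/2$ equal to $-1$ (one of each per unoriented edge), $\det(z 1_E + J) = (z^2 - 1)^{|E|/2}$, so
$$\det\Phi(z) = (z^2-1)^{|E|/2}\det\Bigl(1_V - \tfrac{zA - D}{z^2-1}\Bigr) = (z^2-1)^{|E|/2-|V|}\det(z^2 1_V - zA + D - 1_V).$$
Equating the two expressions and using $|E|/2 - |V| = \chi - 1$ yields \eqref{eq:IharaBass}.

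The main obstacle is pinning down the algebraic identity $T^*S = B + J$ with correct orientation conventions, since the entire argument pivots on it; once that is in hand, the choice of $\Phi(z)$ is essentially forced by the requirement that both Schur complements simplify cleanly. A minor technical point is that the lower-right Schur computation assumes $z 1_E + J$ invertible, which fails only at $z = \pm 1$; since both sides of \eqref{eq:IharaBass} are polynomials in $z$, the identity extends to these values by continuity.
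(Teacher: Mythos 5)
Your proof is correct, and every algebraic step checks out: the identities $ST^*=A$, $SS^*=D$, $JT^*=S^*$, $J^2=1_E$, $T^*S=B+J$ are all verified directly from the definitions (the paper's convention that $e\mapsto e^{-1}$ has no fixed points guarantees $J$ is a genuine swap on each pair $\{e,e^{-1}\}$, so $\det(z1_E+J)=(z^2-1)^{|E|/2}$ holds even with self-loops), the two Schur complements are computed correctly, and the extension to $z=\pm 1$ by polynomial continuity is the right way to close the argument.

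However, this is not the route the paper takes. The paper does not prove Theorem~\ref{th:IB} at all; it cites the original sources and only sketches a \emph{direct eigenvector} argument for the special-case corollary \eqref{eq:A2B} (the $d$-regular spectral correspondence): starting from $B\varphi=\lambda\varphi$ with $|\lambda|\neq 1$, it forms the divergence vector $\hat\varphi(x)=\sum_{e\in E_x^+}\varphi(e)$, inverts to get $\varphi(e)=(\lambda\hat\varphi(e_-)-\hat\varphi(e_+))/(\lambda^2-1)$, and substitutes back to obtain $A\hat\varphi=\mu\hat\varphi$ with $\lambda^2-\mu\lambda+d-1=0$. That argument is shorter and gives an explicit eigenvector dictionary, but it only yields the set-level correspondence between $\sigma(A)$ and $\sigma(B)$ (for $d$-regular $G$, and after separate bookkeeping of the $\lambda=\pm 1$ eigenspaces), not the determinant identity with exact multiplicities and the $(z^2-1)^{\chi-1}$ prefactor. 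Your block-matrix/Schur-complement argument is the classical Bass proof and establishes the full identity \eqref{eq:IharaBass} for arbitrary finite graphs, including irregular ones and graphs with multiple edges and self-loops, at the cost of introducing the auxiliary operators $S,T,J$. In short: the paper's sketch is lighter and eigenvector-centric for a weaker conclusion; yours is the complete proof of the theorem as stated.
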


Ihara was developing Selberg's zeta function theory for discrete groups. The function $
\zeta(w) = \det(1_{E} - w B)^{-1}
$
is called the {\em Ihara zeta function} of the graph $G$. It admits an expression in terms of closed geodesic, see \cite{MR2768284}. 

Note that if \( G \) is a \( d \)-regular graph then \( D = d 1_V \). From Theorem~\ref{th:IB}, it easily follows that
\begin{equation}\label{eq:A2B}
\sigma(B) = \{\pm 1\} \cup \left\{ \lambda \in \mathbb{C} : \text{there exists } \mu \in \sigma(A) \text{ such that } \lambda^2 - \mu \lambda + d - 1 = 0 \right\},
\end{equation}
where \(\sigma(A)\) and \(\sigma(B)\) are the sets of eigenvalues of \( A \) and \( B \), respectively. One can also express \(\sigma(A)\) in terms of \(\sigma(B)\). There is even an explicit description of the eigenspaces of \( B \) and the corresponding eigenspaces for \( A \); see \cite{MR1749978}. Let us sketch a direct proof of \eqref{eq:A2B} and this correspondance. Suppose \( B \varphi = \lambda \varphi \) for some non-zero vector 
$\varphi \in \mathbb{C}^{E}$ and \(|\lambda| \neq 1\). We introduce the divergence vector $ \hat \varphi \in \mathbb{C}^V $ defined for every \( x \in V \) by 
$ \hat \varphi  (x) = \sum_{e  \in E_x^+} \varphi(e).$
Let $e \in E$ with $e_- = x$, $e_+ = y$, we have \( \hat \varphi (x) = (B \varphi)(e) + \varphi(e^{-1}) \). Writing \( (B\varphi)(e) = \lambda \varphi(e) \) for \( e \) and \( e^{-1} \), we arrive at the inversion fomula \( \varphi(e) = (\lambda \hat \varphi (x) -   \hat \varphi  (y)) / (\lambda^2 - 1) \). Substituting this identity into \( (B\varphi)(e) = \lambda \varphi(e) \), we find 
 $
 ( A\hat \varphi) (x) = \mu \hat \varphi (x) $ with $\lambda^2 - \mu \lambda + d - 1 = 0 $ as requested.

The Ihara-Bass identity has several extensions and variants. For example, one can write a determinant identity of the type \eqref{eq:IharaBass} relating the characteristic polynomial of \( A \), \(\det(z 1_V - A)\), to a determinant of the form \(\det(1_{E} - B D(z))\), where \( D(z) \) is an analytic diagonal operator on an unbounded domain of $\mathbb{C}$; see \cite{NIPS2009_0420,NA17}. The operator \( B D(z) \) is a weighted non-backtracking operator as in \eqref{eq:defB}. These spectral correspondences also extend to complex weights, infinite graphs and even matrix-valued weights \cite{MR4024563,MR4756991,BC23}.

\section{Edge eigenvalues of large marked graphs. }

\label{sec:edge}

We now present some results on the extreme eigenvalues of local operators of large sparse marked graphs. 

\subsection{Lower bound from BS convergence. } 
\label{subsec:LBBS}
As in Theorem \ref{th:BScontspec}, let $a : \cGrr \to \dC$ be a symmetric continuous function  with bounded support and $(G_n)$ be subsequence of finite marked graph such that $G_n$ has $n$ vertices and $(G_n) \toBS \mu$ for some $\mu \in \cP(\cGr)$. We consider the sequence of local self-adjoint operators $A_n = A_{G_n,a}$. We order the eigenvalues of $A_n$ as $\lambda_n(A_n) \leq \cdots \leq \lambda_1 (A_n)$. 
By Theorem \ref{th:BScontspec}, all but $o(n)$ eigenvalues are in a small neighborhood of $\mathrm{supp} (m_{\mu,a})$. In particular, if $k_n / n \to 0$ then 
\begin{equation}\label{eq:LBBS}
\liminf_{n \to \infty} \lambda_{k_n} (A_n) \geq  \sup \big\{ \lambda :  \lambda \in \mathrm{supp} (m_{\mu,a})\big\}.
\end{equation} 
Eigenvalues of $A_n $ which are not in a small neighborhood of $\mathrm{supp} (m_{\mu,a})$ are called the {\em spectral outliers}. They capture some geometric information on  $G_n$ which cannot be read from the local topology.

\subsection{Alon-Boppana lower bound. } 
A lower bound of the form \eqref{eq:LBBS} holds for all covering graphs under a non-negativity assumption on the local operator. We shall present this inequality in a general group setting. Let $\Gamma$ be finitely generated with unit $\ee$.  The $\ell^\infty$-norm in $\dC[\Gamma]$ is
\begin{equation*}\label{eq:defopnorm}
\| a \| = \lim_{p \to \infty} \tau \left( (a a^* )^{\frac{p}{2}} \right) ^{\frac 1 p}. 
\end{equation*}
It is equal to the operator norm of $\lambda(a)$ on $\ell^2 (\Gamma)$ which we also denote $\|\lambda(a) \|$.  The closure of $\mathbb{C}[\Gamma]$ under this norm is the reduced $C^*$-algebra of the group, denoted by $C^*_{\mathrm{red}} (\Gamma)$.

In $M_n(\dC)$ we also denote by $\| \cdot \|$ the operator norm (it is also the $\ell^\infty$-norm in $(M_n(\dC),\frac 1 n \TR)$). Let $\rho_n : \Gamma \to \mathrm{U}_n$ be a unitary representation of $\Gamma$ of dimension $n$. Recall that a subspace $H$ of $\dC^n$ is {\em invariant for $\rho_n$} if for all $g$, $\rho_n(g)  H = H$.  We say that $\rho_n$ has {\em non-negative characters} if for all $g \in \Gamma$:
$
\TR (\rho_n(g)) \geq 0.
$
This condition holds for example if $\rho_n (g) \in \mathrm{S}_n$ which is the case relevant for graphs but also if $n = m^2$ and $\rho_n (g) = \pi(g) \otimes \bar \pi (g)$ is the tensor product of a unitary representation $\pi$ of dimension $m$ (this last case is relevant in quantum channels). 

Next, we consider $a  = \sum_g a_g g \in M_k(\dC)[\Gamma]$ which has {\em non-negative joint moments} in the sense that for any $g_1,\ldots,g_l$ in $\Gamma$ and $\epsilon,\ldots,\epsilon_l$ in $\{1,*\}$, we have 
$
\TR \left( a^{\epsilon_1}_{g_1} \cdots a^{\epsilon_l}_{g_l} \right) \geq 0$. Recall finally the operators $\rho_n(a) \in M_{k}(\dC) \otimes M_n(\dC)$ and $\lambda(a) \in M_{k}(\dC) \otimes \mathcal B (\ell^2(\Gamma))$ defined by \eqref{eq:ANF}. 
\begin{proposition}[Alon-Boppana bound, non quantitative] \label{le:AB} Let $a \in M_{k}(\dC)[\Gamma]$ with support $S$ and having non-negative joint moments.  Let $\rho_n : \Gamma \to \mathrm{U}_n$ be a representation of $\Gamma$ with non-negative characters Then, for any $\epsilon  >0$, there exist $n_0 = n_0(\Gamma,S,k,\epsilon)$ and $\delta = \delta(\Gamma,S,k,\epsilon)$ such that if $n \geq n_0$ and $H_n$ is an invariant subspace of codimension at most $\delta n$, we have
$$
\| \rho_n( a) _{|\dC^{k} \otimes H_n} \| \geq ( 1 - \epsilon)\| \lambda(a) \|.
$$
Moreover, if $C^*_{\mathrm{red}} (\Gamma)$ is exact, then $n_0,\delta$ can be made uniform over all $k$ (but still depend on $(\Gamma,S,\epsilon)$).  
\end{proposition}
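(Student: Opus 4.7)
The plan is to lower-bound $\|A_n\|$, where $A_n := \rho_n(a)|_{\mathbb{C}^k \otimes H_n}$, via a moment-method trace inequality. For integer $p \geq 1$, I expand
\begin{equation*}
\bigl(\rho_n(a)\rho_n(a)^*\bigr)^p \;=\; \sum_{\mathbf{g},\mathbf{h} \in S^p} c_{\mathbf{g},\mathbf{h}} \otimes \rho_n\bigl(w_{\mathbf{g},\mathbf{h}}\bigr),
\end{equation*}
with $c_{\mathbf{g},\mathbf{h}} := a_{g_1}a_{h_1}^* \cdots a_{g_p}a_{h_p}^* \in M_k(\mathbb{C})$ and $w_{\mathbf{g},\mathbf{h}} := g_1 h_1^{-1} \cdots g_p h_p^{-1} \in \Gamma$. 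Applying $\TR_{M_k \otimes M_n}$ factors each summand as $\TR_{M_k}(c_{\mathbf{g},\mathbf{h}}) \cdot \chi_{\rho_n}(w_{\mathbf{g},\mathbf{h}})$, and both factors are nonnegative by the joint-moment hypothesis on $a$ and the character hypothesis on $\rho_n$. Keeping only the tuples with $w_{\mathbf{g},\mathbf{h}} = \ee$ (and using $\chi_{\rho_n}(\ee) = n$ together with $\tau_\Gamma(\lambda(w)) = \mathbb{1}[w = \ee]$) yields the cornerstone inequality
\begin{equation*}
\TR_{\mathbb{C}^k \otimes \mathbb{C}^n}\bigl((\rho_n(a)\rho_n(a)^*)^p\bigr) \;\geq\; n \cdot \tau\bigl((\lambda(a)\lambda(a)^*)^p\bigr),
\end{equation*}
where $\tau := \TR_{M_k} \otimes \tau_\Gamma$ is the canonical trace on $M_k \otimes L(\Gamma)$.

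Since $H_n$ and $H_n^\perp$ are both $\rho_n$-invariant, the subspaces $\mathbb{C}^k \otimes H_n$ and $\mathbb{C}^k \otimes H_n^\perp$ are invariant under $\rho_n(a)\rho_n(a)^*$, and the total trace splits additively. The part over $\mathbb{C}^k \otimes H_n^\perp$ is at most $k \cdot \mathrm{codim}(H_n) \cdot \|\rho_n(a)\|^{2p} \leq k\delta n \, \|a\|_1^{2p}$, where $\|a\|_1 := \sum_g \|a_g\|_{M_k}$, while the part over $\mathbb{C}^k \otimes H_n$ is at most $k \dim(H_n) \cdot \|A_n\|^{2p}$. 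Combining with the cornerstone inequality and using $(1-\delta) n \leq \dim H_n \leq n$ gives
\begin{equation*}
\|A_n\|^{2p} \;\geq\; \frac{1}{k}\,\tau\bigl((\lambda(a)\lambda(a)^*)^p\bigr) \;-\; \frac{\delta}{1-\delta}\,\|a\|_1^{2p}.
\end{equation*}

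Set $L := \|\lambda(a)\|$. The spectral-radius formula in the finite tracial von Neumann algebra $M_k \otimes L(\Gamma)$ (with faithful trace $\tau$) gives $\tau((\lambda(a)\lambda(a)^*)^p)^{1/(2p)} \to L$ as $p \to \infty$. Given $\epsilon > 0$, I first choose $p$ large enough that the first term on the right is at least $\frac{1}{k}(1-\epsilon/4)^{2p} L^{2p}$, and then $\delta$ small enough that the second term is at most $\frac{1}{2k}(1-\epsilon/4)^{2p} L^{2p}$. Extracting $2p$-th roots produces $\|A_n\| \geq (2k)^{-1/(2p)} (1-\epsilon/4) L$; enlarging $p$ further (by $\gtrsim \log(k)/\epsilon$) absorbs the prefactor into $(1-\epsilon)/(1-\epsilon/4)$ and yields $\|A_n\| \geq (1-\epsilon) L$. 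Taking $n_0$ at least $\lceil 1/\delta \rceil$ completes the first claim, with $p, \delta, n_0$ depending on $\Gamma, S, k, \epsilon$.

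The delicate part is the exact case. In the above scheme, both the loss $(2k)^{-1/(2p)}$ and the crude bound $\|\rho_n(a)\| \leq \|a\|_1$ force $p$ to grow like $\log k$ and $\delta$ to shrink like a negative power of $k$. To decouple $\delta$ from $k$ under exactness of $C^*_{\mathrm{red}}(\Gamma)$, I would upgrade $\|\rho_n(a)\| \leq \|a\|_1$ to a dimension-free estimate of the form $\|\rho_n(a)\| \leq C \|\lambda(a)\|$, uniform in $k$. Exactness is designed for precisely this transfer: UCP/nuclear approximations of the identity on $C^*_{\mathrm{red}}(\Gamma)$ that are compatible with matrix ampliations (in the spirit of Brown--Ozawa) compare norms in $M_k \otimes_{\min} C^*_{\mathrm{red}}(\Gamma)$ uniformly in $k$. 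Propagating this finer bound into the error term of the moment inequality is the technical heart of the argument and is where I expect the principal obstacle.
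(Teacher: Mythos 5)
Your argument follows the same moment-method blueprint as the paper: expand the $2p$-th power, use the non-negativity of characters and joint moments to drop all non-identity group terms and land on $n\,\tau\bigl((\lambda(a)\lambda(a)^*)^p\bigr)$, peel off the contribution of $H_n^\perp$ by an operator-norm-times-rank bound, and send $p\to\infty$ so that $\tau(\cdot)^{1/2p}\to\|\lambda(a)\|$. Up to bookkeeping, the two proofs are the same argument.

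There is, however, a genuine gap in your treatment of the error term that changes the nature of the conclusion. You bound the $H_n^\perp$ contribution by $\|\rho_n(a)\|^{2p}\leq\|a\|_1^{2p}$, which yields an \emph{additive} error $\tfrac{\delta}{1-\delta}\,\|a\|_1^{2p}$ to be compared with $\tfrac{1}{k}\tau\bigl((\lambda(a)\lambda(a)^*)^p\bigr)\sim\tfrac{1}{k}L^{2p}$, where $L=\|\lambda(a)\|$. To make the error smaller than the main term you then need $\delta\lesssim \tfrac{1}{k}\bigl(L/\|a\|_1\bigr)^{2p}$, which depends on $a$ through the ratio $L/\|a\|_1$ — whereas the proposition asserts $\delta=\delta(\Gamma,S,k,\epsilon)$ independent of $a$. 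You assert the correct dependence at the end without justifying it. The paper avoids this by using Cauchy--Schwarz over the (finite) support of $a^{2l}$ to bound $\|\rho_n(a^{2l})\|$ by a multiple of $\|a^l\|_2^2$ with a constant depending only on $|S|$, $l$ and $k$; this turns the error into a \emph{multiplicative} factor $\bigl(1-\tfrac{|S|^l\sqrt{k}\,m}{n}\bigr)$ that is visibly $a$-independent. Your version can be repaired along the same lines, e.g.\ by noting that $\|a\|_1\leq\sqrt{|S|\,k}\,\|\lambda(a)\|$ (again Cauchy--Schwarz together with $\|\lambda(a)\|\geq \|a\|_2$), so that $L/\|a\|_1\geq (|S|k)^{-1/2}$; but as written this step is missing.

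Your sketch of the exact case relies on the inequality $\|\rho_n(a)\|\leq C\,\|\lambda(a)\|$ uniformly in $n$ and $k$, and this is false. For any nonamenable $\Gamma$ the trivial one-dimensional representation $\rho_1$ already violates it: with $a=\IND_S\in\dC[\Gamma]$ one has $\|\rho_1(a^l)\|=|S|^l$ while $\|\lambda(a^l)\|=\|\lambda(a)\|^l$ grows strictly slower, so the ratio diverges. Exactness does not give a dimension-free comparison between an arbitrary unitary representation and the regular one; what it provides (and what the paper invokes from \cite[Theorem~7.3]{BC23}) is a way to make the Cauchy--Schwarz step of the multiplicative bound uniform over $k$. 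So the mechanism you propose is not the right one, though you rightly flag this part as the delicate one.
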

For the definition of exactness see \cite{zbMATH05256855} (all usual groups are exact). Note that, if $a = a^*$, by applying the lemma to $a_t = a + t \ee$ for $t$ large enough, we also obtain a lower bound on  the rightmost point in the spectra of $\rho_n( a) _{|\dC^k \otimes H_n}$.  This lemma has many variants since its original statement on adjacency operators of $d$-regular graphs \cite{MR875835} and their universal covering tree. In this case, $\|\lambda(a)\| = 2\sqrt{d -1}$ as can be seen from the support of $m_{T_d}$ in \eqref{eq:kesten}. Finite regular graphs such that the second largest eigenvalue of their adjacency matrix is below $2\sqrt{d-1}$ are called {\em Ramanujan graphs}. 
\begin{proof}[Proof of Proposition \ref{le:AB}]
Since $\| \rho_n(a) \|^2 = \| \rho_n(a) \rho_n(a)^*  \| = \| \rho_n(aa^*) \|$, we may assume that $a$ is symmetric. Let $Q$ be the orthogonal projection onto $\dC^k \otimes H_n$ and $m = \mathrm{codim}(H_n)$. For any integer $l \geq 1$,
$$
\| \rho_n( a) _{|\dC^{k} \otimes H_n} \|^{2l} = \| \rho_n( a) Q \|^{2l} = \| \rho_n( a^{2l}) Q \| \geq \frac{1}{nk} \TR ( \rho_n(a^{2l}) Q),
$$
where have used the invariance of $H_n$. Expanding the trace,
\begin{align*}
\frac{1}{nk} \TR ( \rho_n(a^{2l}) Q) & =  \frac{1}{nk} \TR ( \rho_n(a^{2l})) - \frac{1}{nk} \TR ( \rho_n(a^{2l}) (1-Q)) \\
& \geq   \frac{1} {nk} \sum_g \TR ((a^{2l} )_g ) \TR(\rho_n(g)) - \frac{m}{n} \|   \rho_n(a^{2l}) \| \\
& \geq   \frac{1} {k} \TR ((a^{2l} )_{\ee} )  - \frac{m}{n} \|  \rho_n(a^{2l}) \|,
\end{align*}
where we have used at the last line the non-negativity assumptions on $\rho_n$ and  $a$. For $b \in M_{k}(\dC) [\Gamma]$, let 
$$
\| b \|_2 = \sqrt{ \left( \frac{1}{k} \TR \otimes \langle \delta_{\ee} , \cdot \delta_{\ee}  \rangle \right) (  \lambda(bb^*)  ) }  =  \sqrt{ \sum_{g} \frac{1}{k}\TR ( b_g b_g^*) }.
$$ Note that $\frac{1} {k} \TR ((a^{2l} )_{\ee} ) = \| a^l \|_2^2$ and by Cauchy-Schwarz inequality we have, if the support of $b$ in $\Gamma$ has $s$ elements, 
$
\| \rho_n (b) \| \leq  \sum_{g} \| b_g \| \leq \sqrt{sk} \| b \|_2.
$ 
Using a rough estimation on the size of support of $a^{2l}$, we arrive at 
\begin{equation*}\label{eq:ABe}
\| \rho_n( a) _{|\dC^{k} \otimes H_n} \|^{2l} \geq \| a^l \|_2^{2} \left( 1 - \frac{|S|^l \sqrt{k} m}{n} \right).
\end{equation*}
Finally, since $ \| a^l \|_2^{1/l} \to \| \lambda(a) \|$, playing with quantifiers, we obtain the first claim. If $C^*_{\mathrm{red}} (\Gamma)$ is exact, the claim follows as in the end of proof of \cite[Theorem 7.3]{BC23}. 
\end{proof}

It is possible to be quantitative in the proof provided that we have a rate of convergence for $\| a^l \|_2^{1/l} \to \| \lambda(a) \|$. The {\em rapid decay property} is a property which allows precisely this, refer to \cite{zbMATH06859874}. The group $\Gamma$ has the rapid decay property if for all $a \in \dC[\Gamma]$ we have for some finite symmetric generated set $S$ and constants $C_1,C_2 \geq 1$, 
\begin{equation}\label{eq:defRD}
\| \lambda (a) \| \leq C_1 \mathrm{diam}^{C_2}_S (a) \| a \|_2,
\end{equation}
where $\mathrm{diam}_S(a)$ is the diameter of the support of $a$ in $\CAY(\Gamma,S)$ and $\|a\|_2^2 = \sum_g |a|^2$. This is a group property which does not depend on the choice of $S$ (only the constant $C_1$ does). All Gromov-hyperbolic groups and Abelian groups satisfies this property but not $\mathrm{SL}_d(\dZ)$ for $d \geq 3$. For $a \in M_k(\dC)[\Gamma]$, we find $\| \lambda (a) \| \leq \sqrt{k} C_1 \mathrm{diam}^{C_2}_S (a) \| a \|_2$. For the free group, the rapid decay is known as Haagerup inequality. Applying those bounds for the free group generators, we could obtain the following quantitative bound in \cite{BC23}: 
\begin{theorem} Let $(a_i)_{- d \leq i \leq d} \in M_k (\dC)^{2d+1}$ with non-negative moments. For some numerical constant $c >0$, for any  permutation matrices $(S_1,\ldots,S_d)$ of dimension $n$, setting $S_0 = 1_n$ and $S_{-i}  = S^*_i$, we have
$$
\left\| \left( \sum_{i=-d}^d  a_i \otimes S_i   \right)_{| \dC^k \otimes \IND^\perp}  \right\| \geq \left\| \sum_{i=-d}^d a_i \otimes \lambda(g_i) \right\| \left( 1 - c \frac{ \ln (2d) }{\sqrt{ \ln n}} \right),
$$ 
where $(g_1,\ldots,g_d)$ are the free generators of the free group $\mathrm{F}_d$, $g_0 = \ee$ its unit and we have set $g_{-i} = g_i^{-1}$.
\end{theorem}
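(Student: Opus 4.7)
The plan is to adapt the moment-method computation from the proof of Proposition~\ref{le:AB}, taking the exponent $l$ to grow with $n$, and to quantify the convergence rate $\tau_k((aa^*)^l)^{1/(2l)} \to \|\lambda(a)\|$ using the precise form of the rapid decay property \eqref{eq:defRD} for $F_d$, namely Haagerup's inequality (with $C_2 = 3/2$ when one passes from a sphere to a ball).

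Set $a = \sum_{i=-d}^d a_i g_i \in M_k(\dC)[F_d]$, so that $\rho_n(a) = \sum_i a_i \otimes S_i$ when $\rho_n(g_i) = S_i$, and $\lambda(a) = \sum_i a_i \otimes \lambda(g_i)$. The subspace $\dC^k \otimes \IND$ has dimension $k$ and is invariant for $\rho_n$, so let $Q$ be the orthogonal projection onto its complement. By the $C^*$-identity $\|\rho_n(a)_{|Q}\|^{2l} = \|\rho_n((aa^*)^l)_{|Q}\|$. Expanding $(aa^*)^l$ shows that each trace $\TR(((aa^*)^l)_g)$ is a sum of joint moments of $(a_i)$, hence non-negative. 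Combined with the positivity of $(aa^*)^l$ and the non-negativity of the characters of the permutation representation $\rho_n$, the trace step of Proposition~\ref{le:AB} gives
\begin{equation*}
\|\rho_n(a)_{|Q}\|^{2l} \;\geq\; \tau_k\bigl((aa^*)^l\bigr) \;-\; \frac{1}{n}\,\bigl\| \rho_n((aa^*)^l)\bigr\|,
\end{equation*}
where $\tau_k = \tfrac{1}{k}\TR \otimes \tau$.

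Next, both terms are controlled using the free-group structure. The error term is estimated by the trivial bound $\|\rho_n(b)\| \leq \sum_g \|b_g\|$ combined with Cauchy–Schwarz over the ball of radius $2l$ in $F_d$ (of cardinality $\leq (2d)^{2l}$) and the inequality $\tau_k(X^2) \leq \|X\|\,\tau_k(X)$ for $X = (aa^*)^l \geq 0$, yielding $\|\rho_n((aa^*)^l)\| \leq C (2d)^l \sqrt{k\,\|\lambda(a)\|^{2l}\,\tau_k((aa^*)^l)}$. The main term is bounded below by applying Haagerup's inequality in the regular representation to $(aa^*)^{l/2}$ (supported on the ball of radius $l$): $\|\lambda(a)\|^l = \|\lambda((aa^*)^{l/2})\| \leq C\,l^{3/2}\sqrt{\tau_k((aa^*)^l)}$, whence $\tau_k((aa^*)^l)^{1/(2l)} \geq \|\lambda(a)\|\,(C^2 l^3)^{-1/(2l)}$. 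Combining and taking $2l$-th roots,
\begin{equation*}
\|\rho_n(a)_{|Q}\| \;\geq\; \|\lambda(a)\|\,(C^2 l^3)^{-1/(2l)} \Bigl(1 - \tfrac{C'\sqrt{k}\,(2d)^l\, l^{3/2}}{n}\Bigr)^{1/(2l)}.
\end{equation*}
Choosing $l \asymp \sqrt{\ln n}/\ln(2d)$ keeps $(2d)^l$ at most $n^{o(1)}$ and balances the two losses at order $\ln(2d)/\sqrt{\ln n}$.

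The main technical obstacle is to obtain a constant $c$ that is truly \emph{numerical}, i.e.\ independent of the matrix dimension $k$. A naive appeal to the scalar Haagerup inequality introduces a factor $\sqrt{k}$ on the main-term side, and after the $1/(2l)$-th root this leaves a residual $k^{1/(2l)} \geq 1 + (\ln k)/(2l)$ which cannot be absorbed for arbitrary $k$. The remedy is a dimension-free matrix-valued Haagerup inequality (of Buchholz/Pisier type), which for the free group is available with the correct polynomial dependence on the word length. Combined with the exactness of $C^*_{\mathrm{red}}(F_d)$ invoked in the second part of Proposition~\ref{le:AB}, this restores uniformity in $k$; the details are carried out in \cite[Theorem 7.3]{BC23}, and this matrix-valued upgrade is what makes the inequality quantitative and uniform over all $(a_i) \in M_k(\dC)^{2d+1}$.
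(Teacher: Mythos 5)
Your overall plan---quantify the trace argument of Proposition~\ref{le:AB} and use Haagerup's inequality to control the rate of $\|a^l\|_2^{1/l}\to\|\lambda(a)\|$---is exactly the route the paper indicates (it does not give a self-contained proof, deferring to \cite[Theorem 7.3]{BC23}). You also rightly flag that a naive run of this argument is not uniform in the matrix dimension $k$, which is the real content of the word ``numerical'' in the theorem.

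However, the way you propose to close that gap does not work, and the gap is in fact wider than you describe. First, the $\sqrt k$ loss is not confined to the error term: it already poisons the main term. Your lower bound $\tau_k\bigl((aa^*)^l\bigr)^{1/(2l)}\geq \|\lambda(a)\|\,(C^2 l^3)^{-1/(2l)}$ relies on a ``matrix-valued Haagerup inequality'' of the form $\|\lambda(b)\|\leq C(\ell+1)^{3/2}\|b\|_2$ with $\|b\|_2^2=\tau_k(bb^*)=\sum_g \tfrac1k\TR(b_gb_g^*)$ and a $k$-independent constant. That inequality is false: take $\ell=0$ and $b_\ee$ a rank-one projection, so $\|\lambda(b)\|=1$ while $\|b\|_2=k^{-1/2}$. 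The Buchholz/Pisier operator-valued Haagerup inequality is indeed dimension-free, but its right-hand side is $\max_j\|B_j\|$ for certain rectangular block matrices $B_j$ built from the coefficients, and $\max_j\|B_j\|$ can exceed $\|b\|_2$ by a factor of $\sqrt k$. So invoking Buchholz does not by itself recover the lower bound you need on $\tau_k((aa^*)^l)$; nor does exactness, which in Proposition~\ref{le:AB} is used for a soft (non-quantitative) compactness step, not to remove a multiplicative $\sqrt k$ from a moment estimate. A fully $k$-uniform argument has to exploit extra structure---for instance the fact that on the trivial subrepresentation $\dC^k\otimes\IND$ the operator $\rho_n(\cdot)$ acts as $\rho_1(\cdot)=\sum_g(\cdot)_g$ (which removes $k$ from the \emph{error} term and, together with $\|\rho_1(a)\|\leq\sqrt{2d+1}\,\|\lambda(a)\|$, gives a clean dimension-free bound there), plus a genuinely different device for the main term; this is precisely the non-elementary content of \cite[Theorem 7.3]{BC23}.

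There is also a secondary issue with the final balancing. Even granting your Haagerup step, the loss from $(C^2 l^3)^{-1/(2l)}$ is of order $(\ln l)/l$, so the choice $l\asymp\sqrt{\ln n}/\ln(2d)$ produces a deficit of order $\ln(2d)\,\ln\ln n/\sqrt{\ln n}$, with a spurious $\ln\ln n$ factor over the stated bound. Matching the exact exponent $\ln(2d)/\sqrt{\ln n}$ again requires the sharper bookkeeping carried out in \cite{BC23}.
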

The bound is universal as it does not depend on the dimension $k$ (there are better bounds for small $k$). It is stated here for a linear expression in the permutation matrices, for more general polynomials, see  \cite[Section 8]{BC23}.

\subsection{Classical random graphs. } We present here some known asymptotics for adjacency and non-backtracking operators of classical random graph ensembles.

\vspace{2pt}

\noindent{\bf Random regular graph. } It was conjectured by Alon \cite{MR875835} and proved by Friedman \cite{MR2437174} that almost all regular graphs are almost Ramanujan, see Figure \ref{figERREG} (left). For integers $n,d \geq 1$, let $\cG(n,d)$ denote the set of simple $d$-regular graphs with vertex set $V =\{1, \ldots, n\}$. For $2 \leq d \leq n-1$ and $nd$ even, $\cG(n,d)$ is non-empty. 

\begin{theorem}[Friedman's Theorem, adjacency version]\label{th:Friedman}
Let $d \geq 3$ be an integer,  $G$ a random graph distributed according to the uniform measure on $\cG(n,d)$, and let  $d = \mu_1 \geq \mu_2 \geq \cdots \geq \mu_n$ be the eigenvalues of its adjacency operator $A$. Then for every $\veps >0$, the event 
$$
 \max (\mu_2 , -\mu_n) \leq 2 \sqrt{d-1} + \veps,
$$
has probability tending to $1$ as $n$ tends to infinity (and  $nd$ is even). 
\end{theorem}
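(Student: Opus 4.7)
The plan is to reduce Friedman's theorem to a statement about the non-backtracking operator $B$ of $G$ via the Ihara-Bass formula (Theorem~\ref{th:IB}), and then attack the spectral radius of $B$ restricted to a codimension-$2$ subspace by the trace method. Concretely, since $G$ is $d$-regular with $D = d\,1_V$, the correspondence \eqref{eq:A2B} shows that an adjacency eigenvalue $\mu$ with $|\mu|>2\sqrt{d-1}$ must come from a pair of non-backtracking eigenvalues $\lambda^\pm$ satisfying $\lambda^2 - \mu \lambda + d - 1 = 0$, and these eigenvalues lie outside the circle of radius $\sqrt{d-1}$. The two trivial eigenvalues $\pm(d-1)$ of $B$ correspond to the Perron eigenvector $\mathbf 1\in\dC^V$ of $A$. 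Hence it suffices to prove that the restriction of $B$ to the orthogonal complement of the two $\pm(d-1)$-eigenspaces has operator norm at most $\sqrt{d-1}+\veps$ with high probability.

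To make the random model tractable, I would first replace the uniform measure on $\cG(n,d)$ by the configuration model (a uniform pairing of $nd$ half-edges), which yields a multigraph that is simple with probability bounded away from $0$, so spectral high-probability statements transfer. On this model I apply the trace method to $BB^*$: for $\ell = \lfloor c\log n\rfloor$ with $c$ a small constant, bound $\TR\bigl((B^*)^\ell B^\ell Q\bigr)$, where $Q$ projects onto the codimension-$2$ complement of the trivial eigenvectors. Expanding the trace, one must estimate the expectation over the random pairing of a sum indexed by pairs of non-backtracking walks of length $\ell$. The classical obstacle is that walks whose edge-neighborhood contains two independent cycles (\emph{tangled} walks) produce exponentially many contributions which na\"ive bookkeeping cannot absorb. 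The standard remedy, going back to Friedman and streamlined by Bordenave's non-backtracking approach, is to work with the \emph{tangle-free} truncation $B^{(\ell)}$ of $B^\ell$, obtained by declaring a walk contribution to be zero as soon as its trajectory induces a subgraph with two cycles. Because the tangle event has probability $O(n^{-1+o(1)})$ in a ball of radius $\ell$, one shows $B^\ell = B^{(\ell)}$ on a high-probability event, and separately the norm of $B^\ell - B^{(\ell)}$ is negligible.

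The core of the proof is then the moment bound $\dE\,\TR\bigl(Q (B^{(\ell)})^* B^{(\ell)} Q\bigr) \leq n\,(d-1)^{\ell}\,\mathrm{poly}(\ell)$, after subtracting the trivial spectrum. This reduces to a combinatorial enumeration of closed tangle-free walks in the configuration model together with sharp estimates on the joint probability that prescribed half-edge pairings are realized; the projection $Q$ is crucial because it kills the leading $(d-1)^{2\ell}$ contribution coming from the Perron eigenvectors, leaving only the $(d-1)^{\ell}$ contribution from paths that close at a single vertex. Taking $\ell\to\infty$ with $n$ and invoking Markov's inequality yields $\|QB^\ell Q\|^{1/\ell}\to \sqrt{d-1}$ in probability, hence the desired bound on $\mu_2$ and $-\mu_n$ via \eqref{eq:A2B}.

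The main obstacles I anticipate are (i) the tangle-free combinatorics, where obtaining the sharp constant $\sqrt{d-1}$ rather than a weaker bound requires a near-optimal enumeration of non-backtracking closed walks on a random multigraph together with tight control of the pairing probabilities, and (ii) the passage between configuration and uniform models, which is cheap for bulk statements but needs care for the extreme eigenvalue since conditioning on simplicity can a priori reshape tails. Getting the constant right, in particular avoiding a loss of a factor of $\log n$ or $\sqrt{\log n}$ in the error term, is the step where Friedman's original proof required the elaborate machinery of selective traces, and where the non-backtracking approach gains significant simplification.
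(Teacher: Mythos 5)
Your proposal follows the non-backtracking/Ihara--Bass route that the paper attributes to \cite{MR2437174,bordenaveCAT} (the intermediate statement being Theorem~\ref{th:FriedmanB}), so this is essentially the approach the paper describes; the paper also offers a second route via strong convergence (Theorem~\ref{th1} with $q=1$ and $\mathrm{S}_n$), but your sketch is a faithful outline of the first. One factual slip is worth correcting. You write that ``the two trivial eigenvalues $\pm(d-1)$ of $B$ correspond to the Perron eigenvector $\mathbf 1\in\dC^V$ of $A$'' and accordingly take $Q$ to be a codimension-$2$ projection. In fact, for a $d$-regular graph the Perron eigenvalue $\mu_1=d$ of $A$ produces, via $\lambda^2-d\lambda+(d-1)=0$, the two roots $\lambda=d-1$ and $\lambda=1$, not $\pm(d-1)$; the value $-(d-1)$ comes only from $\mu=-d$, i.e.\ only when $G$ is bipartite, which occurs with vanishing probability for uniform $d$-regular graphs and is in any case excluded by the probabilistic statement. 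The remaining ``trivial'' eigenvalues $\pm1$, of multiplicity $\chi-1$ from the factor $(z^2-1)^{\chi-1}$ in \eqref{eq:IharaBass}, already lie inside the disk of radius $\sqrt{d-1}$ when $d\geq 3$ and need no treatment. Thus the only eigenvector of $B$ that must be projected away is $\chi=\IND\in\dC^{E}$; since $B\chi=B^*\chi=(d-1)\chi$ for a $d$-regular graph, $\chi^\perp$ is $B$-invariant and your trace-method argument goes through with $Q$ the rank-$(|E|-1)$ orthogonal projection onto $\chi^\perp$. The other ingredients you list --- configuration model, tangle-free truncation, closed non-backtracking walk enumeration, and deducing the bound on $\mu_2$ and $-\mu_n$ from $|\lambda_2(B)|\leq\sqrt{d-1}+\veps$ via \eqref{eq:A2B} --- correctly reproduce Bordenave's proof strategy.
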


Simpler proofs now exist \cite{bordenaveCAT,CGTVH24} and the fluctuation of order $n^{-2/3}$ has been recently proved in \cite{huang2025ramanujanpropertyedgeuniversality}. In \cite{MR2437174,bordenaveCAT}, one first proves the corresponding statement for the non-backtracking matrix $B$ and then deduces it for $A$ via the Ihara–Bass formula \eqref{eq:A2B}. For a finite graph $G =(V,E)$ with $|E| = 2m$, we denote the complex eigenvalues of $B$ by $\lambda_1, \ldots, \lambda_{2m}$ indexed abritarily by their absolute values:
\begin{equation}\label{eq:eigB}
|\lambda_{2m}| \leq \cdots \leq |\lambda_2| \leq \lambda_1.
\end{equation}
If $G$ is a $d$-regular graph, $\lambda_1 = d - 1$ and the non-backtracking operator of its universal covering tree has spectral radius $\sqrt{\lambda_1}$. An equivalent formulation of Theorem \ref{th:Friedman} is the following, see Figure \ref{fig:simu} for an illustration.

\begin{theorem}[Friedman's Theorem, non-backtracking version]\label{th:FriedmanB}
Let $d \geq 3$ be an integer,  $G$ a random graph distributed according to the uniform measure on $\cG(n,d)$, and $d-1 = \lambda_1 , \lambda_2 , \ldots $ the eigenvalues of its non-backtracking operator $B$ indexed as in \eqref{eq:eigB}. Then for every $\veps >0$, the event 
$$
|\lambda_2| \leq \sqrt{\lambda_1}  + \veps,
$$
has probability tending to $1$ as $n$ tends to infinity (and  $nd$ is even). 
\end{theorem}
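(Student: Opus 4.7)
The plan is to prove Theorem~\ref{th:FriedmanB} by the trace method applied directly to the non-backtracking operator, following the strategy of \cite{bordenaveCAT}. The first step is to identify the trivial part of the spectrum of $B$: on a $d$-regular graph the eigenvalue $\lambda_1 = d-1$ is achieved by the all-ones vector on $E$, and by Ihara--Bass \eqref{eq:A2B} one also has a controlled contribution from the Perron eigenvalue of $A$. Let $\Pi$ denote the orthogonal projection onto the orthogonal complement of the span of these known eigenvectors (a subspace of dimension $O(1)$). Then $|\lambda_2| \leq \|\Pi B \Pi\|$, and by the usual moment inequality, for every integer $\ell \geq 1$,
\begin{equation*}
|\lambda_2|^{2\ell} \;\leq\; \|\Pi B \Pi\|^{2\ell} \;\leq\; \TR\!\left( (\Pi B \Pi)^{\ell} (\Pi B \Pi)^{* \ell} \right).
\end{equation*}
Taking $\ell = \lfloor c \log n \rfloor$ for a small constant $c>0$ and using Markov's inequality on the expected trace, it suffices to show that
$\dE\, \TR((\Pi B \Pi)^{\ell} (\Pi B \Pi)^{* \ell}) \leq n \cdot (d-1)^{\ell}(1+o(1))^{\ell}$, so that the $2\ell$-th root gives the desired $\sqrt{d-1}+\varepsilon$ bound.

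The second step is a choice of model. I would pick the permutation model, where a random $d$-regular graph (or a close variant via bipartite double cover / configuration model) is described by $d/2$ independent uniform matchings or, as in Example~\ref{ex:freeperm}, by $d$ independent uniform permutations encoding a cover of a bouquet; contiguity arguments then transfer the result back to a uniform element of $\cG(n,d)$. In this representation, expanding $(\Pi B \Pi)^{\ell} (\Pi B \Pi)^{* \ell}$ produces a sum over closed non-backtracking walks $\gamma$ of length $2\ell$ in the oriented edge set, and the expectation decouples into a product of indicator expectations for each edge being traversed as prescribed by the random matchings/permutations.

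The third and crucial step is the combinatorial estimate on $\dE\, \TR((\Pi B \Pi)^{\ell} (\Pi B \Pi)^{* \ell})$. The key device is the tangle-free decomposition: a walk is tangle-free at scale $\ell$ if the subgraph of edges it visits contains at most one cycle. For tangle-free walks, edges in the underlying multigraph are revisited in a tree-like pattern, so one encodes each walk by its excess (number of edges minus number of new vertices) and counts walks by the number of vertices and the number of coincidences. A careful enumeration, together with the $1/(n)_k$-type probability that $k$ prescribed edges are present in the random matching, gives the bound $n \,(d-1)^{\ell}(1+o(1))^{\ell}$. I would replace $B$ by a tangle-free proxy $B^{(\ell)}$, prove the above bound for $B^{(\ell)}$, and then control $B - B^{(\ell)}$ separately.

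The main obstacle will be this last cleanup step: bounding the operator norm of the tangled part $B^{\ell} - (B^{(\ell)})^{\ell}$ acting on the non-trivial subspace. Tangled walks are rare but each tangle can drastically inflate the contribution, so one must exploit independence in the matching construction together with a switching or ``vertex isolation'' argument: condition on the location of the tangle, resample edges outside the tangled neighborhood, and apply the tangle-free bound to the resulting shorter walks. Handling tangles is the core technical difficulty and is what distinguishes Friedman's theorem from softer bounds obtained by the naive trace method, which would only give $|\lambda_2| \leq d-1$ times a subpolynomial factor. Once this is achieved, combining the tangle-free moment bound with the tangle cleanup yields $\dE |\lambda_2|^{2\ell} \leq n (d-1)^{\ell}(1+o(1))^{\ell}$, and the $2\ell$-th root with $\ell = \Theta(\log n)$ gives the statement.
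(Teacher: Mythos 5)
The paper is a survey and does not itself prove this theorem; it states it and points to Friedman's original argument and to the proof in \cite{bordenaveCAT}, which is the approach your outline follows (non-backtracking trace method, tangle-free truncation, excess-based enumeration, switching for the tangled remainder). So the route is the right one, but there is a genuine gap in the way you set up the moment bound.

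You wrote
\[
|\lambda_2|^{2\ell}\;\leq\;\|\Pi B\Pi\|^{2\ell}\;\leq\;\TR\bigl((\Pi B\Pi)^\ell(\Pi B\Pi)^{*\ell}\bigr),
\]
but the second inequality is false for a non-normal matrix: take $M$ nilpotent with $M^\ell=0$, then the right-hand side vanishes while $\|M\|^{2\ell}$ does not. Worse, the first inequality, while technically true, would defeat the purpose. The whole point of raising to a high power in the trace method for the (highly non-normal) operator $B$ is that the operator norm $\|\Pi B\Pi\|$ is of order $d-1$ (indeed $\|B\|=d-1$ for a $d$-regular graph), far larger than the spectral radius $\sqrt{d-1}$ you are after; bounding $\|\Pi B\Pi\|$ and taking powers can never give $\sqrt{d-1}+\veps$. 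The correct chain is
\[
|\lambda_2|^{2\ell}\;\leq\;\bigl\|(\Pi B\Pi)^\ell\bigr\|^2\;\leq\;\TR\bigl((\Pi B\Pi)^\ell(\Pi B\Pi)^{*\ell}\bigr),
\]
where the first step uses that the all-ones vector $\chi$ is both a left and a right eigenvector of $B$ (regularity), so $\chi^\perp$ is a $B$-invariant subspace on which the remaining eigenvalues live, and that $\bigl\|(\Pi B\Pi)^\ell\bigr\|\geq \rho(\Pi B\Pi)^\ell=|\lambda_2|^\ell$. It is the $\ell$-th power, not $B$ itself, whose entries are eventually controlled; this is precisely why $B^{(\ell)}$ in the cited proof is a tangle-free proxy for $B^\ell$, not for $B$.

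A second point glossed over is the centering. Since $\Pi=I-\chi\chi^*/|\chi|^2$, the expansion of $\TR\bigl((\Pi B\Pi)^\ell(\Pi B\Pi)^{*\ell}\bigr)$ does not reduce to a raw count of closed non-backtracking walks; one must commute the rank-one projector through the powers, which in Bordenave's argument is handled by a telescoping decomposition of $(B^{(\ell)})$ in which a centered increment (the deviation from the Perron contribution) appears at each step. Without that centering the expected trace is dominated by the $\lambda_1=d-1$ term and the bound $n(d-1)^\ell(1+o(1))^\ell$ does not materialize. Fix the inequality chain and make the telescoping/centering explicit, and the rest of your outline (configuration or permutation model plus contiguity, tangle-free moment estimate, resampling to control the tangled remainder) is correct and matches the cited proof.
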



\vspace{2pt}

\noindent{\bf Erd\H{o}s–Rényi graph.} For fixed $d >0$, we now consider the Erd\H{o}s-R\'enyi random graph with $n$ vertices and edge probability $\min(1,d/n)$, see Example \ref{exER}.  The extreme eigenvalues of the adjacency operator $A$ diverge with $n$, and their eigenvectors are concentrated on vertices with largest degrees and their neighbors \cite{zbMATH01877117,BBK2,zbMATH08053292}.

\begin{theorem}\label{th:ERA}
Let $d > 0$, $G$ be an Erd\H{o}s–Rényi graph on $n$ vertices and edge probability $\min(1,d/n)$. Let $\mu_1 \geq  \mu_2  \ldots   \geq \mu_n$ be the eigenvalues of its adjacency operator $A$. Then, for any $\veps \in (0,1)$, with probability tending to one as $n \to \infty$, for all $1 \leq k \leq n^{1-\veps}$, we have
$$
 \ABS{ \mu_k - \sqrt{d_k} } \leq  \veps \sqrt{d_1} \quad \hbox{ and } \quad   \ABS{ \mu_{n+1-k} + \sqrt{d_k} }  \leq \veps \sqrt{d_1},
$$ 
where $d_1 \geq d_2 \geq \cdots$ is the ordered degree sequence of $G$.
\end{theorem}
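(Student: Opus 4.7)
The plan is to establish that, in this sparse Erd\H{o}s--R\'enyi regime, the large eigenvalues of $A$ are produced by the stars centered at the vertices of largest degree, while the remainder of the graph contributes only a perturbation of small operator norm. Since the adjacency operator of the star $K_{1,m}$ has eigenvalues $\pm\sqrt{m}$ (with $m-1$ zeros), this immediately explains the claimed $\mu_k \approx \sqrt{d_k}$ and $\mu_{n+1-k} \approx -\sqrt{d_k}$. The typical top degree behaves like $d_1 \asymp \log n/\log\log n$, so $\sqrt{d_k}\gg \sqrt{d}$ throughout the range $k \leq n^{1-\veps}$, which makes a perturbative approach around the star eigenvalues meaningful.

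\textbf{Step 1: isolating the high-degree vertices.} First I would choose a threshold $T = T(\veps)$ so that the set $V^\star := \{v : \deg(v) \geq T\}$ has cardinality at least $n^{1-\veps/2}$ with high probability; the Poisson tail fixes $T$ of order $\veps\log n/\log\log n$. A first-moment computation on $G(n,d/n)$ then shows that, with high probability, any two distinct vertices of $V^\star$ are at graph distance at least $3$ (so the stars $\{v\}\cup N(v)$ for $v\in V^\star$ are pairwise disjoint in vertex support), and moreover the second neighborhood of each $v\in V^\star$ induces a tree.

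\textbf{Step 2: decomposition and Weyl's inequality.} On this good event, set $A_\star := \bigoplus_{v\in V^\star} A_{S_v}$, the block-diagonal operator formed by the star adjacency matrices, and $A' := A - A_\star$, which is the adjacency operator of the graph obtained from $G$ by deleting every edge incident to $V^\star$. The block-diagonal operator $A_\star$ has spectrum $\{\pm\sqrt{d_v}\}_{v\in V^\star}\cup\{0\}$, hence its $k$-th largest (resp. smallest) eigenvalue equals $\sqrt{d_k}$ (resp. $-\sqrt{d_k}$) for every $k \leq |V^\star|$, which covers the range $k \leq n^{1-\veps}$ in the statement. Weyl's inequality then yields, for every such $k$,
\[
|\mu_k(A) - \sqrt{d_k}| \leq \|A'\|, \qquad |\mu_{n+1-k}(A) + \sqrt{d_k}| \leq \|A'\|.
\]

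\textbf{Step 3: controlling the bulk (the main obstacle).} The theorem is then reduced to proving $\|A'\| \leq \veps\sqrt{d_1}$ with high probability, which in view of $\sqrt{d_1}\asymp\sqrt{\log n/\log\log n}$ is implied by $\|A'\| = O(\sqrt{d})$. The graph underlying $A'$ has maximum degree strictly less than $T$ and, away from a small perturbation, coincides with a sparse Erd\H{o}s--R\'enyi graph, so one expects its spectral radius to be close to $2\sqrt{d}$. Proving this is the genuine obstacle, and the trivial bound $\|A'\| \leq T-1 \asymp \log n/\log\log n$ falls short by a square root. Two strategies are available: a Kahn--Szemer\'edi trace moment method carefully adapted to the heavy-tailed degree sequence; or, following the strategy used for Theorem~\ref{th:FriedmanB}, converting via the Ihara--Bass identity (Theorem~\ref{th:IB}) into a spectral radius bound for the non-backtracking operator $B$ of $A'$, whose entries are uniformly bounded and which therefore admits a clean trace analysis on the underlying locally tree-like structure.
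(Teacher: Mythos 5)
The paper gives no proof of this theorem; it is cited from \cite{BBK2}, so there is no internal argument to compare against. Your decomposition idea (disjoint stars plus a bulk remainder, matched by Weyl) is a natural first guess, but two of the claims you build it on are not correct, and the second one is fatal as stated.

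The decisive problem is the target bound $\|A'\| = O(\sqrt d)$ in Step 3. For constant $d$, removing the stars of the $\asymp n^{1-\veps/2}$ highest-degree vertices leaves a graph whose maximum degree is just below the threshold $T \asymp \veps\,\log n/\log\log n$; there are still polynomially many vertices of degree $\geq T/2$ that are not in $V^\star$ and not adjacent to it, and each such vertex carries a star $K_{1,m}$ with $m \geq T/2$ inside $A'$. By interlacing, $\|A'\| \geq \sqrt{T/2}\to\infty$, so $\|A'\|$ cannot be $O(\sqrt d)$; the correct order is $\|A'\| \asymp \sqrt T$. For Weyl to close you would then need $\sqrt T \leq \veps\sqrt{d_1}$, i.e.\ $T\lesssim \veps^2 d_1$ rather than $T\asymp\veps d_1$. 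That smaller threshold is still compatible with $|V^\star|\geq n^{1-\veps}$ (since $n^{1-c\veps^2}\geq n^{1-\veps}$), but it is not what you chose, and even after the fix you would still have to prove a sharp nonasymptotic $\|A'\|\leq C\sqrt T$ for a random graph with degrees truncated at $T$; that estimate, not $O(\sqrt d)$, is the real content of the theorem, and neither a trace-moment nor an Ihara--Bass route will give $O(\sqrt d)$ when the maximum degree of $A'$ itself diverges.

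The disjointness claim in Step 1 is also false for the threshold you pick. With $\Pr(\deg v\geq T)\asymp n^{-\veps/2}$ and edge probability $d/n$, the expected number of unordered pairs $u,v\in V^\star$ with $u\sim v$ is of order $n^2\cdot n^{-\veps}\cdot(d/n)= d\,n^{1-\veps}$, which diverges for every $\veps\in(0,1)$; pairs at distance two (common neighbors) are even more numerous. So a first-moment argument does not give separation at distance $3$, the stars $S_v$ overlap, and $A_\star:=\bigoplus_v A_{S_v}$ is not a block of $A$: edges joining two vertices of $V^\star$, and leaves shared by two stars, are double-counted. This can plausibly be repaired by pruning $V^\star$ or assigning each shared neighbour to a single centre, but it is a genuine missing step, not a routine verification, and it interacts with the spectrum of $A_\star$ (whose nonzero eigenvalues would no longer be exactly $\pm\sqrt{d_k}$). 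Both gaps would have to be filled before the Weyl argument can be run.
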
  
For all $1 \leq k \leq n^{1-\veps}$, we have $d_k \sim \ln (n/k) / \ln \ln (n/d)$. Theorem \ref{th:ERA} is extracted from \cite{BBK2} (which considers inhomogeneous random graphs). A sharper result has been proved in \cite{zbMATH08053292} which also implies  the localization of the top eigenvectors. As a corollary of Theorem \ref{th:ERA} there is no spectral gap in the sense that $\mu_k  / \mu_1$ converges in probability to $1$ for every $k = n^{o(1)}$ (Figure \ref{figERREG} does not show this claim).  For the non-backtracking operator $B$, however, there exists an analogue of Theorem \ref{th:FriedmanB}, proved in \cite{BLM2015}; see Figure \ref{fig:simu} for an illustration.

\begin{theorem}\label{th:ERB}
Let $d > 0$, $G$ be an Erd\H{o}s–Rényi graph on $n$ vertices and edge probability $\min(1,d/n)$. Let  {$\lambda_1 , \lambda_2 , \ldots $ the eigenvalues of its non-backtracking operator $B$ indexed as in \eqref{eq:eigB}}. Then for every $\veps >0$, the event 
$$
|\lambda_1 - d | \leq \veps \quad \text{and} \quad 
|\lambda_2| \leq \sqrt{\lambda_1}  + \veps
$$
has probability tending to $1$ as $n \to \infty$. 
\end{theorem}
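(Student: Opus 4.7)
The plan follows the general philosophy of Friedman's non-backtracking proof (Theorem~\ref{th:FriedmanB}) adapted to the inhomogeneous degree setting of Erd\H{o}s--R\'enyi. The key reason this succeeds, in contrast with what happens for the adjacency operator (Theorem~\ref{th:ERA}), is that the non-backtracking constraint suppresses the influence of high-degree vertices: a vertex of degree $k$ contributes a factor $k-1$ only on the first visit of a non-backtracking walk, and any subsequent visit forces the creation of a cycle, a non-generic event in a sparse random graph.

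For the top eigenvalue, I would first establish the lower bound $\lambda_1 \geq d - \varepsilon$ by exhibiting an approximate Perron vector. Testing $\chi \in \ell^2(E)$ defined by $\chi(e) = 1$, formula \eqref{eq:defB0} gives $(B\chi)(f) = \deg(f_-) - 1$, so
\[
\frac{\langle \chi, B\chi\rangle}{\|\chi\|^2} = \frac{\sum_v \deg(v)(\deg(v)-1)}{\sum_v \deg(v)},
\]
which concentrates around $d$ by the law of large numbers for $\POI(d)$ degrees. Combined with a Perron--Frobenius-type argument on the giant strongly connected component of $G$, this yields the lower bound. The matching upper bound $\lambda_1 \leq d + \varepsilon$ will follow either from the trace-method estimate described below, or from the Benjamini--Schramm convergence $G_n \toBS T$ to a $\POI(d)$ Galton--Watson tree (Example~\ref{exER}) together with the fact that the non-backtracking operator of $T$ has spectral radius $d$ (the forward-degree of a Poisson branching process being itself Poisson with mean $d$).

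The heart of the proof is the gap bound $|\lambda_2| \leq \sqrt{d} + \varepsilon$, obtained through the high-moment / trace method. Let $P$ denote the orthogonal projection onto a one-dimensional approximate Perron subspace. For any integer $\ell \geq 1$,
\[
|\lambda_2|^{2\ell} \leq \bigl\| (I - P)\, B^\ell \bigr\|^2 \leq \TR\bigl( (B^*)^\ell (I - P)\, B^\ell \bigr),
\]
and expanding the trace in the oriented-edge basis produces a sum indexed by pairs of non-backtracking walks of length $\ell$ in $G$ that share their endpoints. The target is to bound this trace by $n \cdot d^\ell(1 + o(1))$; choosing $\ell$ of order $c \log n$ with $c$ large then makes $n^{1/\ell} \to 1$ and delivers the claim. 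The principal obstacle is the classification of such pairs of walks: tree-like pairs produce the leading $n d^\ell$ contribution, which is exactly what the projection $P$ removes, while pairs whose edge-union $H$ has positive excess $|E(H)| - |V(H)| + 1 \geq 1$ carry factors $(d/n)^{\mathrm{excess}}$ that must be summed over all possible shapes. Walks spending too much time near high-degree vertices require a separate truncation argument excising vertices of degree beyond $(\log n)^C$. Following Bordenave--Lelarge--Massouli\'e \cite{BLM2015}, the classification is implemented via the dichotomy between tangle-free walks and tangled excursions; the principal technical difficulty, and the source of the main work in the proof, is obtaining uniformly summable bounds on all non-tree-like contributions when $\ell$ is as large as $\log n$ and the degree sequence is unbounded.
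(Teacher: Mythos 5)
Your proposal is in the right spirit — the result is indeed proved in \cite{BLM2015} by a trace method over non-backtracking walks combined with a tangle-free/tangled dichotomy, and your intuition about why non-backtracking tames high-degree vertices is exactly the right heuristic. But the moment bound as you have set it up does not close, and the gap is in the choice of parameters.

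You write
$|\lambda_2|^{2\ell} \leq \|(I - P) B^\ell\|^2 \leq \TR\bigl( (B^*)^\ell (I - P)\, B^\ell \bigr)$
and then target a bound of order $n\,d^\ell$ on the trace, proposing to take $\ell = c\log n$ with $c$ large so that $n^{1/\ell}\to 1$. The obstacle is that the tangle-free classification of walk pairs, which is what makes the enumeration tractable, only holds when the ball of radius $\ell$ around a typical vertex is essentially a tree, i.e.\ when $d^\ell \ll n$, forcing $\ell \leq \kappa \log_d n$ with $\kappa$ a \emph{small} constant (BLM take $\kappa < 1/4$). With $\ell$ of that size, $n^{1/(2\ell)}$ is a constant strictly larger than $1$ (roughly $d^{1/(2\kappa)}$), and your bound only yields $|\lambda_2| = O(\sqrt d)$ with a large implicit constant, not $(1+\varepsilon)\sqrt d$. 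What \cite{BLM2015} actually do is decouple two exponents: they write $B^\ell = S_\ell + \Delta_\ell$ with $S_\ell$ an explicit low-rank matrix built from the tangle-free part, and then bound $\|\Delta_\ell\|^{2m} \leq \TR\bigl((\Delta_\ell \Delta_\ell^*)^m\bigr)$ with a \emph{second} parameter $m$. The resulting combinatorics is over concatenations of $2m$ tangle-free walks of length $\ell$, $\ell$ remains small enough for the tree approximation to hold, and $m \sim \log n / \log\log n$ absorbs the prefactor $n$ through $n^{1/(2m)} \to 1$. Without this second exponent the argument loses a multiplicative constant that $\varepsilon$-closeness to $\sqrt{d}$ cannot absorb. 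Relatedly, the rank-one projector $P$ in your bound should not be an abstract Perron projector (which is not known a priori) but the explicit $S_\ell$, so that it can be enumerated in the trace expansion together with everything else.

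Two smaller points. First, the Rayleigh-quotient computation with $\chi = \mathbf{1}$ gives $\langle\chi, B\chi\rangle / \|\chi\|^2 \approx d$, but for a non-normal operator this quantity is not a lower bound on the spectral radius; you need instead to look at $\langle \mathbf{1}, B^\ell \mathbf{1}\rangle^{1/\ell}$ for $\ell \sim \kappa\log n$, which counts non-backtracking walks and does concentrate near $d^\ell$, giving $\lambda_1 \geq d - \varepsilon$. Second, Benjamini--Schramm convergence by itself never yields an \emph{upper} bound on $\lambda_1$ (it controls the bulk, not outliers), so in the ``either/or'' for the upper bound, only the trace method route actually works.
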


This theorem is proved using much more probabilistic tools than Theorem \ref{th:FriedmanB}. For instance, the BS limit is no longer deterministic, unlike in the regular case. There are analogs of  Theorem \ref{th:ERB} for inhomogeneous random graphs  (such as the stochastic block model) and for weighted graphs, see notably \cite{BLM2015,bordenave2020detection,zbMATH07671982}. These results were motivated by the study of spectral algorithms in community detection and compressed sensing problems.

\begin{figure}[htb]\label{fig:simu}
\begin{center}\includegraphics[height = 4cm]{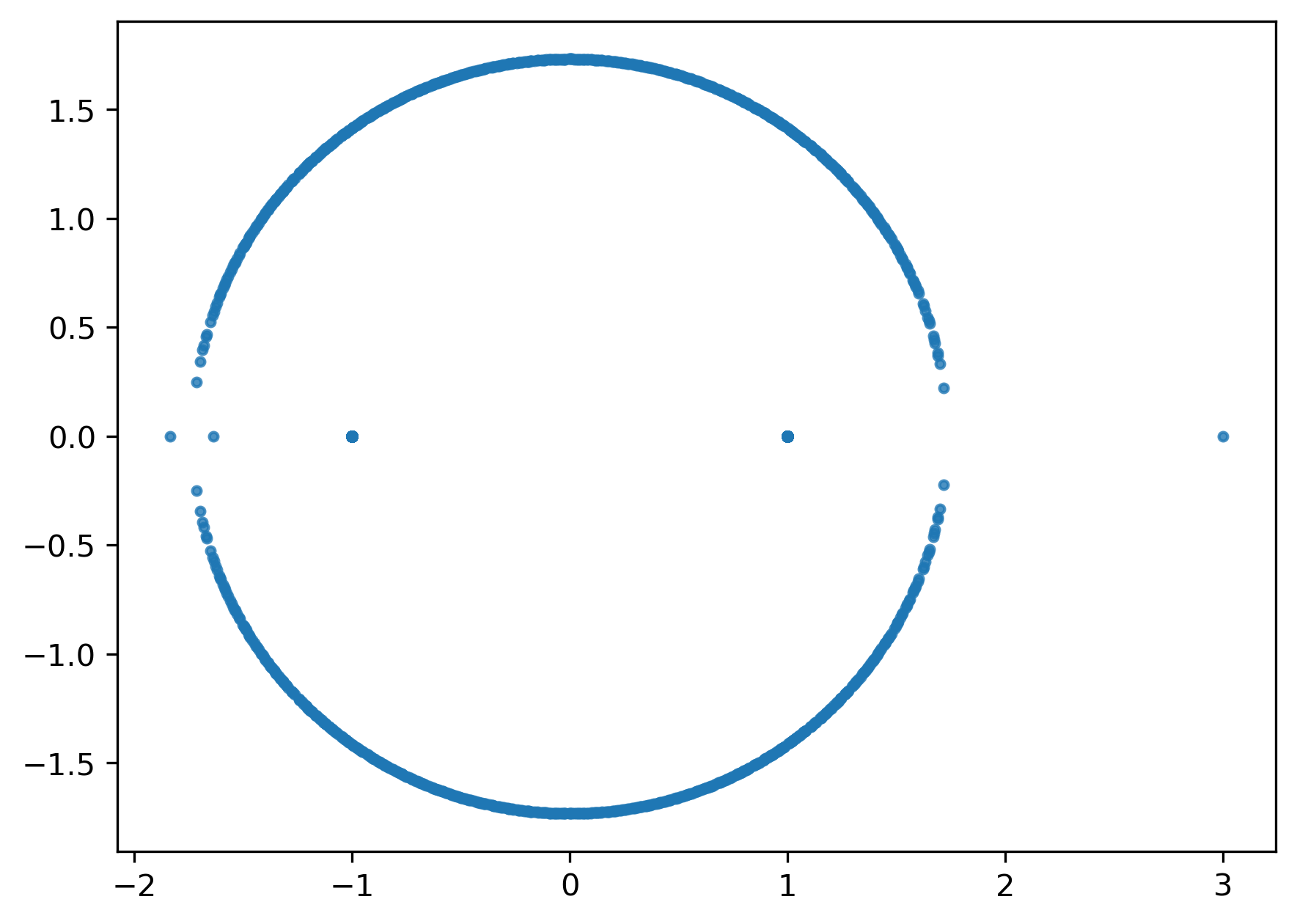} \hspace{2cm} \includegraphics[height = 4.cm]{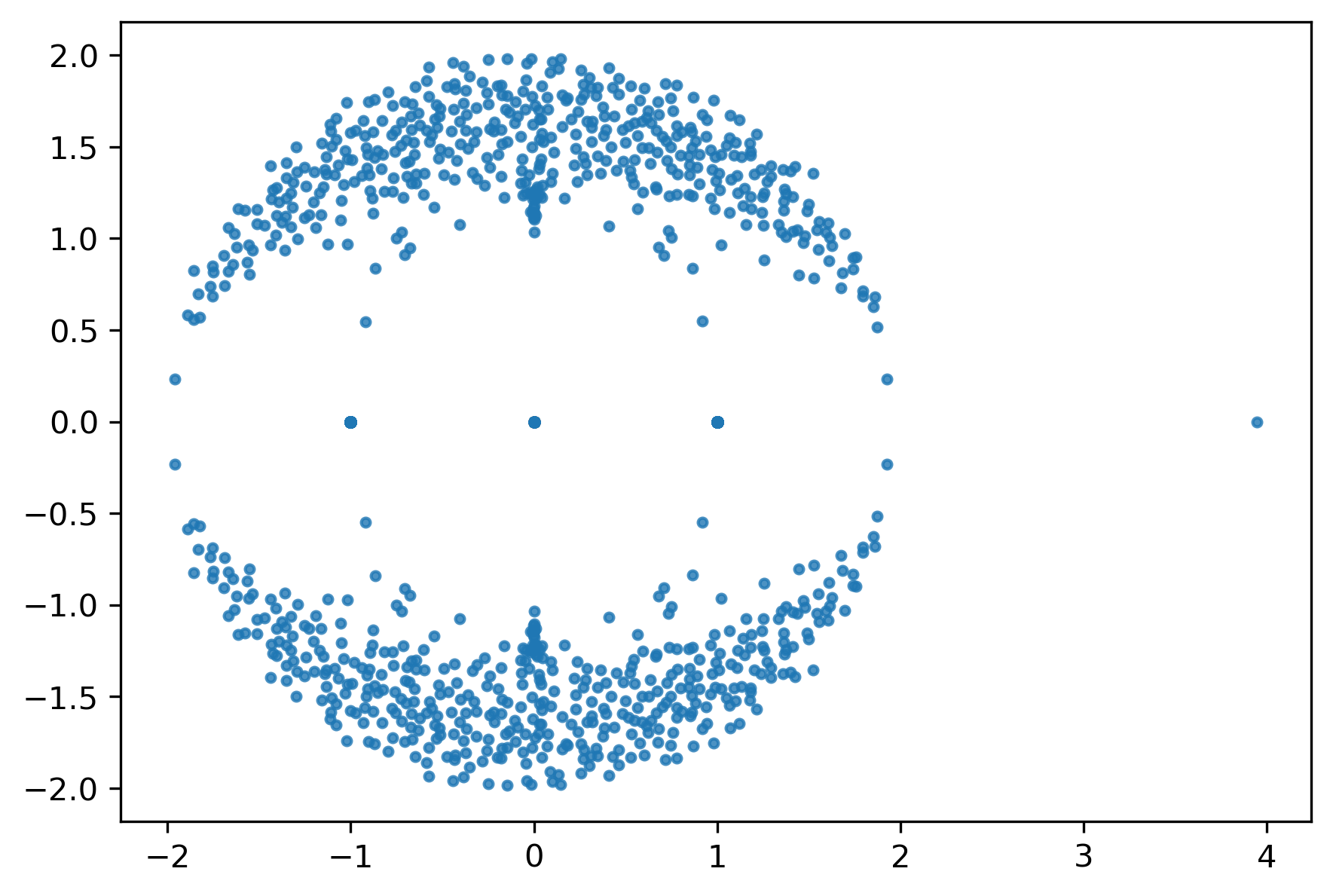}
\caption{Left: eigenvalues in the complex plane of $B$ for a uniform $d$-regular graphs with $n = 500$ vertices, $d =4$. Right: eigenvalues of $B$ for an Erd\H{o}s-R\'enyi graph with $n = 500$ vertices and edge probability $d/n$, $d=4$.}
\end{center}\end{figure}

An interesting open problem is to establish the convergence of the empirical distribution of the eigenvalues of $B$ in Theorem \ref{th:ERB}. Also, beyond adjacency and non-backtracking operators, a systematic study of the edge eigenvalues of local operators is still missing for the Erd\H{o}s-R\'enyi random graph and other inhomogeneous random graphs. As we explain next, much more is known for random Schreier and covering graphs.

\subsection{A strenghtening of BS convergence. } As usual let $\Gamma$ be finitely generated group with left regular representation $\lambda$ and $\rho_n : \Gamma \to \mathrm{U}_n$ be a subsequence of unitary representations of dimension $n$.

\vspace{2pt}

\noindent{\bf Strong convergence in distribution. } Recall from \eqref{eq:WCGr} that $\rho_n$ converges in distribution toward $\lambda$ if the normalized characters converge. We say that $\rho_n$ {\em converges strongly (in distribution)} toward $\lambda$ if, in addition, 
\begin{equation}\label{eq:SCGr}
\lim_{n \to \infty} \| \rho_n(a) \| = \| \lambda(a) \| ,\quad \hbox{for all $a \in \mathbb{C}[\Gamma]$},
\end{equation}
where $\| \cdot \|$ is the operator norm and is also the $\ell^\infty$-norm defined in \eqref{eq:defopnorm} in the relevant $*$-algebras (ie $(M_n(\dC),\frac 1 n \TR))$ and $(\dC[\Gamma],\tau)$). Strong convergence in distribution appeared in \cite{MR2183281,MR3205602} in the more general context of matrix $*$-algebras generated by finitely many matrices. We refer to the surveys \cite{magee2025,vanhandel2025}.

 For any even integer $1 \leq p <\infty$, the convergence in distribution implies the convergence of the $\ell^p$-norm $( \frac {1}{n} \TR (\rho_n(aa^*)^{p/2}))^{1/p}$  of $\rho_n(a)$,  toward the $\ell^p$-norm of $\lambda(a)$. Strong convergence allows  $p = \infty$ and it has important consequences. In particular, if $a =a^*$ then strong convergence rules out outliers: all eigenvalues of $\rho_n(a)$ are within vanishing distance from the spectrum of $\lambda(a)$.

There are various implications between convergence in distribution and strong convergence or, more precisely, between \eqref{eq:WCGr} and \eqref{eq:SCGr}. First of all, arguing as in Subsection \ref{subsec:LBBS}, \eqref{eq:WCGr} implies 
\begin{equation}\label{eq:SCGrliminf}
\liminf_{n \to \infty} \| \rho_n(a) \| \geq \| \lambda(a) \| ,\quad \hbox{for all $a \in \mathbb{C}[\Gamma]$}.
\end{equation}
Therefore, strongly converging representations are asymptotically spectrally optimal. Also, if $\Gamma$ is of subexponential growth (e.g. amenable), 
then convergence in distribution implies strong convergence. For non-amenable groups, the difficult part in establishing strong convergence is the converse inequality to \eqref{eq:SCGrliminf}:
\begin{equation}\label{eq:SCGrlimsup}
\limsup_{n \to \infty} \| \rho_n(a) \| \leq \| \lambda(a) \| ,\quad \hbox{for all $a \in \mathbb{C}[\Gamma]$}.
\end{equation} 
It even turns out that the upper bound \eqref{eq:SCGrlimsup} often implies both \eqref{eq:SCGrliminf} and \eqref{eq:WCGr}. We say that a group has the unique trace property if $C_{\mathrm{red}}^*(\Gamma)$ has a unique trace, namely the canonical trace $\tau(a) = a_e = \langle \delta_e, \lambda(a) \delta_e \rangle$. By \cite[Theorem 1.3]{zbMATH06827884} a discrete group has the unique trace property if and only if it is has no non-trivial amenable normal subgroup. If $\Gamma$ has the unique trace property, then \eqref{eq:SCGrlimsup} implies \eqref{eq:SCGrliminf} and \eqref{eq:WCGr}, see \cite[Lemma 2.13]{vanhandel2025}.



For permutation representations, we have seen in Lemma \ref{le:BSsch} that convergence in distribution is BS convergence. Strong convergence is thus a reinforcement of BS convergence. However, a clear obstruction appears. The subspace $H_n = \mathrm{span}(\mathbf{1})$ of $\mathbb{C}^{n}$ of vectors with constant coordinates is left invariant by all permutation matrices. In particular $H_n$ is an invariant subspace of $\rho_n$. Therefore, the best we can hope for is the strong convergence of the subrepresentation $(\rho_n)_{|H_n^\perp}$ toward $\lambda$: 
\begin{equation*}
\lim_{n \to \infty} \| \rho_n (a)_{| H_n^\perp} \| = \| \lambda(a) \| ,\quad \hbox{for all $a \in \mathbb{C}[\Gamma]$}.
\end{equation*}
Depending on the representation, we may sometimes aim at establishing strong convergence on the orthogonal of a larger invariant subspace considered as trivial (as in \cite[Theorem 9.3]{BC23} for a random permutation representation of Cartesian products of $\mathrm{F}_d$).

\vspace{2pt}

\noindent{\bf The linearization trick. }
There are many variants of this {\em linearization trick}. In a nutshell, it asserts that for $(V_{1}, \ldots, V_d) \in M_{k}(\mathbb C)$, we can replace the study of non-commutative $*$-polynomials $P(V_1, \ldots,V_d)$ with coefficients in $\mathbb C$ by the study of non-commutative $*$-polynomials $P(V_1, \ldots,V_d)$ with matrix-valued coefficients which are linear in the $V_i's$. We thus have traded linearity for matrix coefficients. See \cite{MR1401692,MR3585560,bordenave2020markovian} for three different instances of this trick. The most relevant for us here is due to Pisier \cite{MR1401692}: 
\begin{proposition}\label{prop:LT}
Let $S$ be a finite symmetric generating set of $\Gamma$ such that $\ee \in S$. The  following statements are equivalent: 
\begin{enumerate}[label = (\roman*)]
\item $\lim_{n} \| \rho_n(a) \| = \| \lambda(a) \|$ for all $a \in  \mathbb{C}[\Gamma]$.
\item $\lim_{n} \| \rho_n(a) \| = \| \lambda(a) \|$ for all  $k \geq 1$ and $a \in M_k(\mathbb{C})[\Gamma]$.
\item  $\lim_{n} \| \rho_n(a) \| = \| \lambda(a) \|$ for all $k \geq 1$ and $a \in M_k(\mathbb{C})[\Gamma]$ with support in $S$.
\end{enumerate}
\end{proposition}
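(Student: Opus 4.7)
Plan: The implications (ii) $\Rightarrow$ (i) and (ii) $\Rightarrow$ (iii) are immediate, obtained by restricting to $k=1$ or to elements supported in $S$. The two non-trivial directions are (i) $\Rightarrow$ (ii), lifting strong convergence from scalar to matrix coefficients, and (iii) $\Rightarrow$ (ii), lifting it from linear pencils in $S$ to arbitrary matrix polynomials. I would treat them in that order.

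For (i) $\Rightarrow$ (ii), the argument is a soft C$^*$-algebraic amplification. Fix a free ultrafilter $\mathcal U$ and form the ultraproduct $*$-homomorphism $\pi_{\mathcal U}: \dC[\Gamma] \to \prod_{\mathcal U} M_n(\dC)$ induced by the $\rho_n$. Hypothesis (i) says $\|\pi_{\mathcal U}(a)\| = \|\lambda(a)\|$ for every $a \in \dC[\Gamma]$, so $\pi_{\mathcal U}$ extends to an injective unital $*$-homomorphism on $C^*_{\mathrm{red}}(\Gamma)$. Such maps are automatically completely isometric between C$^*$-algebras, so tensoring with $\mathrm{id}_{M_k(\dC)}$ gives $\|(\mathrm{id}_k \otimes \pi_{\mathcal U})(a)\| = \|(\mathrm{id}_k \otimes \lambda)(a)\|$ for all $a \in M_k(\dC)[\Gamma]$. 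Unravelling the left-hand side as $\limsup_{\mathcal U}\|\rho_n(a)\|$ and combining with the matrix-valued lower bound \eqref{eq:SCGrliminf} (which is the content of Proposition~\ref{le:AB}) yields the matching limit $\lim_n \|\rho_n(a)\| = \|\lambda(a)\|$.

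For (iii) $\Rightarrow$ (ii), I would invoke Anderson's self-adjoint linearization trick, as used by Haagerup--Thorbj{\o}rnsen and formalized by Pisier. Given a self-adjoint $a \in M_k(\dC)[\Gamma]$ (the general case reduces to this via $a^*a$), the plan is to construct $K \geq k$, a self-adjoint $L \in M_K(\dC)[\Gamma]$ with support contained in $S$, and a distinguished projection $p \in M_K(\dC)$ such that for every $z \in \dR \setminus \sigma(a)$ the pencil $L - z\, p \otimes \ee$ is invertible in $M_K(\dC) \otimes C^*_{\mathrm{red}}(\Gamma)$, with inverse depending rationally on $z$, and such that the converse spectral implication also holds in any $*$-representation. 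The construction is iterative: each application of the Schur complement identity peels one factor off a higher-degree monomial while enlarging the matrix size. Applying (iii) to each pencil $L - z\, p\otimes \ee$ then gives norm convergence of the associated resolvents of $\rho_n(a)$ to those of $\lambda(a)$, which is equivalent to $\|\rho_n(a)\| \to \|\lambda(a)\|$.

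The main obstacle is the linearization step. The Schur-complement recursion has to be arranged so that the spectral correspondence between $a$ and $L$ is valid uniformly across all $*$-representations (so that it survives the limit $n \to \infty$) and so that the resolvent of $L$ controls that of $a$ with quantitative estimates that do not blow up as $z$ approaches $\sigma(a)$. The amplification in the first step, by contrast, is essentially formal once one grants complete isometry of injective unital $*$-homomorphisms between C$^*$-algebras.
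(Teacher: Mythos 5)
Your decomposition --- trivial restrictions for (ii)$\Rightarrow$(i) and (ii)$\Rightarrow$(iii), then (i)$\Rightarrow$(ii) by $C^*$-amplification and (iii)$\Rightarrow$(ii) by linearization --- is the right architecture, and your (i)$\Rightarrow$(ii) argument is correct and clean: by (i) the ultraproduct $*$-homomorphism is isometric on $C^*_{\mathrm{red}}(\Gamma)$, hence completely isometric, and since this holds for every free ultrafilter, $\lim_n\|\rho_n(a)\|$ exists and equals $\|\lambda(a)\|$ for every $a\in M_k(\dC)[\Gamma]$; no separate lower bound such as \eqref{eq:SCGrliminf} is actually needed here (and \eqref{eq:SCGrliminf} is not part of the hypotheses of the proposition in any case).

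The genuine gap is in (iii)$\Rightarrow$(ii), precisely at the step you flag as the main obstacle but do not close. Applying (iii) to the self-adjoint pencils $L - z\,p\otimes\ee - t\ee$ for $t\in\dR$ (still supported in $S$ because $\ee\in S$) only gives convergence of $\lambda_{\max}$ and $\lambda_{\min}$ of $\rho_n(L-z\,p\otimes\ee)$; it carries no information about whether $0$ lies in the spectrum of $\rho_n(L-z\,p\otimes\ee)$, i.e.\ it does not exclude eigenvalues appearing inside a spectral gap of $\lambda(L-z\,p\otimes\ee)$. So norm convergence of the pencils does not yield invertibility of $\rho_n(L-z\,p\otimes\ee)$, let alone a uniform resolvent bound, and the chain ``pencil norms converge $\Rightarrow$ resolvents of $\rho_n(a)$ converge $\Rightarrow \|\rho_n(a)\|\to\|\lambda(a)\|$'' breaks at its first link. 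What Pisier's argument (and the paper's sketch) delivers is sharper than a spectral correspondence: for each $a$ one exhibits a \emph{single} $b\in M_k(\dC)[\Gamma]$ supported in $S$ and a \emph{continuous} real function $f$ such that $\|\rho(a)\|=f(\|\rho(b)\|)$ for \emph{every} unitary representation $\rho$ (reducing to $\Gamma=\mathrm{F}_d$ with its free generators). Then (iii) for $b$ together with continuity of $f$ gives (i) --- and, run with matrix coefficients, gives (ii) --- in one line, without ever inverting pencils along the sequence. The Anderson/Schur-complement machinery you invoke is indeed the right tool for constructing $b$, but the required output is this functional relation between two operator norms, valid simultaneously in all representations, not a resolvent estimate.
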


The key implication is $(iii) \Rightarrow (i)$. It is proved by showing that for any $a \in  \mathbb{C}[\Gamma]$, there exist $k \geq 1$, $b  \in M_k(\mathbb{C})[\Gamma]$ with support in $S$ and a continuous real function $f$ such that $ \| \rho(a) \|  = f ( \| \rho(b)\|)$ for all unitary representations $\rho$ (it is enough to prove this for $\Gamma = \mathrm{F}_d$ and its free generators).   See \cite{MR1738412,BC23,vanhandel2025} for quantitative and algorithmic versions of this claim. The linearization trick is an important ingredient in most proofs of strong convergence. It is interesting to notice that elements $a \in M_k(\dC)[\Gamma]$ in (iii) are precisely the operators which appeared in \eqref{eq:ANF} as local operators of $(\Gamma,S)$-covers.

\subsection{Applications of strong convergence. }

In  their seminal paper \cite{MR2183281}, Haagerup and Thorbjørnsen have proven the almost sure strong convergence of independent random matrices of dimension $n$ sampled according to the Gaussian Unitary Ensemble (GUE). Their original motivation was to establish that  the Ext-invariant for the reduced C*-algebra of the free group on $2$ generators was not a group but only a semi-group, answering an old question in operator algebra. Since then, strong convergence has found many other interesting applications in operator algebras \cite{zbMATH07565550} but in various other fields such as in spectral geometry \cite{MR4635304} or in minimal surfaces \cite{song24} to cite a few. 
We will discuss in subsections \ref{subsec:distance}-\ref{subsec:SCFREE} two applications in graph theory. Also related to BS convergence \cite{bordenave_lacoin_2021}:  the strong convergence of permutation representations implies the cutoff for the random walks on Schreier and covering graphs of non-amenable groups  $\Gamma$ satisfying the rapid decay property \eqref{eq:defRD}.

We expect that there will be further applications of strong convergence and refer to the surveys \cite{magee2025,vanhandel2025}.  Note however that after more than twenty years of active research in the area, random matrix theory is still currently the only access to strong convergence: there are no known non-trivial deterministic examples of strong convergence of matrix algebras.

\subsection{Typical graph distance. }\label{subsec:distance}

We illustrate the strong convergence phenomenon by giving a nice yet elementary application in geometric graph theory.  As usual let $\Gamma$ be a finitely generated group with left regular representation $\lambda$. We assume that $\Gamma$ is non-amenable. If $S$ is a finite generatoring set of $\Gamma$, we denote by $\beta_S  > 0$ the exponential growth rate of $\CAY(\Gamma,S)$, that is $\beta_S = \lim_{r \to \infty} \frac 1 r \ln |B_S(r) |$, where $B_S(r)$ is the ball of radius $r$ in $\CAY(\Gamma,S)$. Let $\rho_n : \Gamma \to \mathrm{S}_{V_n}$ be a sequence of permutation representations on $|V_n|$ with $|V_n| \to \infty$. The ball of radius $r$ in $\SCH(\Gamma,S,\rho_n)$ around any point contains at most  $|B_S(r)|$ elements. It follows that the eccentry of any point is at least $(1+o(1)) \ln |V_n| / \beta_S$. More precisely,
$$
\liminf_{n \to \infty } \min_{v \in V_n} \max_{u \in V_n}  \frac{ d(u,v) }{ \ln |V_n| } \geq \frac{1}{\beta_S},
$$
where $d(u,v)$ is the graph distance in $\SCH(\Gamma,S,\rho_n)$. This bound is actually reached for almost all pairs of points if $\rho_n$ converges strongly on $\IND^\perp$ (orthogonal of vectors with constant coordinates).

\begin{lemma} Assume that $\Gamma$ is non-amenable and has the rapid decay property \eqref{eq:defRD}. Let $\rho_n : \Gamma \to \mathrm{S}_{V_n}$ be a sequence of permutation representations which converge strongly toward $\lambda$ on $\IND^\perp$. Then, for any finite generating set $S$ of $\Gamma$, for all $\veps \in (0,1)$, we have
$$
\lim_{n \to \infty } \max_{v \in V_n} \frac{ \left|\left\{ u \in V_n : d(u,v) \geq (1+\veps) \frac{ \ln |V_n| }{  \beta_S}  \right\} \right|  }{|V_n|} = 0,
$$
where $d(u,v)$ is the distance between $u$ and $v$ in $\SCH(\Gamma,S,\rho_n)$.
\end{lemma}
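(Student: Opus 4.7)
The strategy is to count vertices within graph distance $r := \lceil (1+\varepsilon)\ln |V_n|/\beta_S\rceil$ of a fixed $v$ by a Cauchy--Schwarz second-moment argument applied to the \emph{ball operator} $M := \rho_n(a)$ with $a := \sum_{g\in B_S(r)} g \in \mathbb{R}[\Gamma]$. Since $S=S^{-1}$ we have $B_S(r)=B_S(r)^{-1}$, so $a=a^*$ and $M \in \mathbb{Z}_{\geq 0}^{V_n\times V_n}$ is symmetric. Moreover $M(u,v)=|\{g\in B_S(r): \rho_n(g)v = u\}|$, so $M(u,v)>0$ iff $d(u,v)\leq r$ in $\SCH(\Gamma,S,\rho_n)$; in particular $|V_n|-N_r(v) = |\mathrm{supp}\, M(\cdot,v)|$ where $N_r(v) := |\{u : d(u,v)>r\}|$.

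Cauchy--Schwarz on the $v$-th column yields $|\mathrm{supp}\, M(\cdot,v)| \geq (\sum_u M(u,v))^2 / \sum_u M(u,v)^2$. The numerator equals $|B_S(r)|^2$ since each $\rho_n(g)$ is a permutation, while $\sum_u M(u,v)^2 = \langle \delta_v, \rho_n(a^2)\delta_v\rangle$. Because $\rho_n(g)\IND = \IND$, the decomposition $\mathbb{C}^{V_n} = \mathbb{C}\IND \oplus \IND^\perp$ is $\rho_n$-invariant; writing $P$ for the projection onto $\mathbb{C}\IND$ and $R_n := \rho_n(a)(I-P)$, one has $\rho_n(a) = |B_S(r)| P + R_n$ with $PR_n = R_nP = 0$, hence $\rho_n(a^2) = |B_S(r)|^2 P + R_n^2$. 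Therefore $\langle \delta_v,\rho_n(a^2)\delta_v\rangle \leq |B_S(r)|^2/|V_n| + \|R_n\|^2$, and a short rearrangement gives, \emph{uniformly in $v$},
\begin{equation*}
\frac{N_r(v)}{|V_n|} \;\leq\; \frac{|V_n|\,\|R_n\|^2}{|B_S(r)|^2}.
\end{equation*}

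By definition of $\beta_S$, $|B_S(r)| \geq |V_n|^{(1+\varepsilon)(1-o(1))}$, so $|V_n|/|B_S(r)|^2 \leq |V_n|^{-1-2\varepsilon + o(1)}$, and it suffices to show $\|R_n\|^2 \leq |V_n|^{1+\varepsilon + o(1)}$. Strong convergence of $\rho_n$ to $\lambda$ on $\IND^\perp$ suggests $\|R_n\| = \|\rho_n(a)_{|\IND^\perp}\| \approx \|\lambda(a)\|$, and rapid decay \eqref{eq:defRD} applied to $a$ (of $S$-diameter $2r$ and $\ell^2$-norm $\sqrt{|B_S(r)|}$) gives
\begin{equation*}
\|\lambda(a)\| \;\leq\; C_1 (2r)^{C_2} \sqrt{|B_S(r)|} \;\leq\; C(\ln|V_n|)^{C_2}\, |V_n|^{(1+\varepsilon)/2 + o(1)},
\end{equation*}
which is of exactly the required size, so $|V_n|\|R_n\|^2/|B_S(r)|^2 \leq |V_n|^{-\varepsilon + o(1)} \to 0$.

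The main obstacle is that $a = a_r$ varies with $n$, whereas strong convergence is only stated pointwise: $\|\rho_n(b)_{|\IND^\perp}\| \to \|\lambda(b)\|$ for each fixed $b \in \mathbb{C}[\Gamma]$. A rigorous proof therefore needs an effective version of strong convergence valid for $b$ of $S$-diameter up to $O(\ln|V_n|)$. Via the linearization trick (Proposition \ref{prop:LT}) this reduces to a quantitative strong-convergence estimate for matrix polynomials in $\rho_n(A_S)$ of degree $\lesssim r_n$, which, combined with the rapid-decay bound for $\lambda$, furnishes the required $\|R_n\| \lesssim (\ln|V_n|)^{C_2}\sqrt{|B_S(r_n)|}$ and closes the argument.
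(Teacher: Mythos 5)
Your Cauchy--Schwarz second-moment bound is a reasonable device, but the proof has a genuine gap that you yourself flag at the end: you apply strong convergence to $a = a_{r_n}$ with $r_n \sim (1+\varepsilon)\ln|V_n|/\beta_S$, i.e.\ to an element of the group algebra whose support grows with $n$, whereas the hypothesis only gives $\|\rho_n(b)_{|\IND^\perp}\| \to \|\lambda(b)\|$ for each \emph{fixed} $b \in \mathbb{C}[\Gamma]$. Your suggested repair --- an effective strong-convergence estimate uniform over elements of $S$-diameter $O(\ln|V_n|)$, obtained through the linearization trick --- is not what the lemma's hypotheses deliver, and invoking it would turn a soft statement into a quantitative one that is generally much harder and not assumed here.

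The paper's proof circumvents this entirely by a simple trick: instead of letting the radius of the ball grow with $n$, it fixes a (large but $n$-independent) radius $r$, takes the normalized ball operator $p = |B_S(r)|^{-1}\sum_{g\in B_S(r)} g$, and then raises $P_n := \rho_n(p)$ to a power $l = l_n \sim \ln|V_n|/(r\beta)$. The only strong-convergence input needed is for the single fixed element $p$, and the growth in $n$ is absorbed by the operator power via submultiplicativity, $\|(P_n^l)_{|\IND^\perp}\| \leq \|(P_n)_{|\IND^\perp}\|^l$. Concretely, the paper's $\ell^1$--$\ell^2$ estimate
\[
\sum_{u} \Bigl| P_n^l(v,u) - \tfrac{1}{|V_n|} \Bigr| \;\leq\; \sqrt{|V_n|}\,\|(P_n)_{|\IND^\perp}\|^l
\]
combined with strong convergence ($\|(P_n)_{|\IND^\perp}\| \leq \sqrt{2}\,\|P\|$ for large $n$) and rapid decay ($\|P\| \leq C_1 r^{C_2}|B_S(r)|^{-1/2}$), and the observation that $P_n^l(v,u)=0$ whenever $d(u,v)>rl$, closes the argument with $rl \approx \ln|V_n|/\beta$ and $\beta$ arbitrarily close to $\beta_S$. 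Your decomposition $\rho_n(a) = |B_S(r)|P + R_n$ with $PR_n = R_nP = 0$ is essentially the same spectral split as the paper's restriction to $\IND^\perp$, and your Cauchy--Schwarz plays the role of the paper's $\ell^1$--$\ell^2$ bound; if you rerun your computation with $a = p_0^l$ for a fixed ball $p_0$ rather than a growing ball, and use submultiplicativity to control $\|R_n\|$, your approach would go through. As written, however, the key step is unproved.
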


\begin{proof}
Fix  $0 <\beta< \beta_S$. There exists $\delta >0$ such that $\beta(1+2\delta) < \beta_S$. Then, for all $r$ large enough, we have $|B_S(r)| \geq e^{\beta(1+2\delta) r}$. For such $r$ to be defined later,  let $p = \frac 1 {|B_S(r)|} \sum_g \IND_{ g \in B_S(r)} g \in \dC[\Gamma]$, $P_n = \rho_n(p)$ and $P = \lambda ( p)$: $P_n$ and $P$ are the transition kernels of the simple random walks on $\SCH(\Gamma,S,\rho_n)$ and $\CAY(\Gamma,S)$ respectively. We have $P_n 1 = P_n^* 1 = 1 $. Thus, for any $v \in V_n$ and integer $l \geq 1$,
$$
\sum_{u} \left|P_n^l (v,u) - \frac{1}{|V_n|} \right| \leq \sqrt{|V_n|} \sqrt{ \sum_{u} \left| P_n^l (v,u) - \frac{1}{|V_n|} \right|^2} = \sqrt{|V_n|} \| (P_n^l )_{|\IND^\perp} (v,\cdot ) \|_2 \leq \sqrt{|V_n|} \|( P_n)_{|\IND^\perp} \|^l .
$$
Strong convergence implies $\|( P_n)_{|\IND^\perp} \| \leq \sqrt 2 \| P \|$ for $n$ large enough, while rapid decay \eqref{eq:defRD} implies  $\| P \| \leq C_1 r^C_2 \| p \|_2 =  C_1 r^C_2 |B_S(r)|^{-1/2} \leq (e^{-\beta(1+ \delta)}) ^{r/2}$ if $r$ was chosen large enough. We find 
$$
\sum_{u} \left|P_n^l (v,u) - \frac{1}{|V_n|} \right| \leq \left( 2 |V_n| e^{-rl \beta (1+ \delta)} \right)^{1/2}. 
$$
If $rl \geq \ln (2|V_n|) /\beta$ then (i) the right hand side goes to $0$ and (ii)  $d(u,v) >  \ln (2|V_n|) /  \beta  $ implies $P_n^l (v,u) = 0$. Use
$$
\left|\left\{ u \in V_n : d(u,v) > \frac{ \ln (2 |V_n| ) }{  \beta}  \right\} \right|   \leq  |V_n| \sum_{u} \left|\frac{1}{|V_n|} - P_n^l (v,u)  \right|,
$$
and the lemma follows since $\beta$ can be taken arbitrarily close to $\beta_S$.
\end{proof}

Remark that $S$ need not be symmetric in this lemma. Also, the proof guarantees the existence of many different paths (replace $B_S(r)$ by any other set of radius $r$ and size at least $e^{r \beta}$, e.g. an half-sphere of radius $r$). This joint use of strong convergence and rapid decay is very efficient, see \cite{bordenave_lacoin_2021} for a more elaborate use.

\subsection{Strong convergence for random representations of the free group. } 

As already mentionned, each family $(V_1,\ldots,V_d)$ of unitary matrices in $\mathrm{U}_k$ uniquely defines a unitary representation of dimension $k$ of the free group $\mathrm{F}_d = \langle g_1,\ldots, g_d \rangle$ by setting $\rho (g_i) = V_i$, for $1 \leq i \leq d$. 

Then, let $(U_1,\ldots,U_d)$ be random unitary matrices of dimension $n$ that are Haar distributed on $\mathrm{U}_n$, $\mathrm{O}_n$ or $\mathrm{S}_n$ (this last case is relevant for graphs). From what precedes this defines a random representation of $\mathrm{F_d}$ of dimension $k_n = n$ by setting $\rho_n(g_i) = V_i = U_i$. More generally, consider a unitary representation $\pi_n$ of dimension $k_n$ of $\mathrm{U}_n$, $\mathrm{O}_n$ or $\mathrm{S}_n$. Then, 
we can define a random representation of $\mathrm{F_d}$ of dimension $k_n$ by setting $\rho_n(g_i) = V_i = \pi_n(U_i)$. The above case $\pi_n (u)= u$ corresponds to the standard representation.  Below, we consider the tensor product representations defined as, for a pair of integers $(q_+,q_-)$ as
\begin{equation}\label{eq:pin}
\pi_n (u) = u^{\otimes q_+} \otimes \bar u^{\otimes q_-}.
\end{equation}
This is a representation of dimension $k_n = n^q$ with 
$
q = q_+ + q_-.
$
Note that, there is no real loss of generality as all representations of $\mathrm{U}_n$ are sub-representations of some tensor product representation.

Finally, let $H_n$ be the vector subspace of $\mathbb C^{k_n}$ of the vectors that are left invariant by $\pi_n(u)$ for all $u$ in $\mathrm{U}_N$, $\mathrm{O}_N$ or $\mathrm{S}_N$. Note that $H_n$ depends on whether we take $\mathrm{U}_n$, $\mathrm{O}_n$ or $\mathrm{S}_n$. Importantly, for $q =1$, $H_n$ is trivial for $\mathrm{U}_n$ or $\mathrm{O}_n$ while $H_n = \mathrm{span} (\IND)$ for $\mathrm{S}_n$.

\begin{theorem}\label{th1}
Let $(U_1,\ldots,U_d)$ be Haar distributed on $\mathrm{U}_n, \mathrm{O}_n$ or $\mathrm{S}_n$, and let $\rho_n$ be the random representation of $\mathrm{F}_d = \langle g_1, \ldots, g_d \rangle$ of dimension $k_n = n^q$ defined by $ \rho_n(g_i) = \pi_n(U_i)$, with $\pi_n$ as in \eqref{eq:pin}. Then, as $n \to \infty$, $\rho_n$ restricted to $H_n^\perp$ converges strongly toward $\lambda$ in probability. More precisely, for any $a \in \dC[\mathrm{F}_d]$,
\begin{equation}\label{eq:th11}
\lim_{n \to \infty} \frac{1}{k_n} \TR (\rho_n(a))  = \tau ( a )
\quad \hbox{ and } \quad 
\lim_{n\to \infty} \| \rho_n(a)_{|H_n^\perp} \|   = \| \lambda(a) \|. 
\end{equation}
\end{theorem}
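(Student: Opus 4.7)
The plan is to establish the two statements of \eqref{eq:th11} in sequence: the convergence of normalized traces first, then the norm convergence on $H_n^\perp$. The first part is essentially a character computation, while the second is the substantive strong-convergence statement and requires the linearization trick together with a moment-method argument on the trace of high powers.

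For convergence in distribution, the key observation is that since $\pi_n$ is a group homomorphism, for every reduced word $g = g_{i_1}^{\epsilon_1}\cdots g_{i_l}^{\epsilon_l}\in\mathrm{F}_d$ one has $\rho_n(g)=\pi_n(U_g)$ with $U_g=U_{i_1}^{\epsilon_1}\cdots U_{i_l}^{\epsilon_l}$, and the tensor structure of $\pi_n$ gives $\TR(\rho_n(g))=\TR(U_g)^{q_+}\,\overline{\TR(U_g)}^{q_-}$. For $g=\ee$, $U_g=I$ and so $k_n^{-1}\TR(\rho_n(\ee))=1=\tau(\ee)$. For $g\ne\ee$, Nica's theorem \cite{MR1197059} (for $\mathrm{S}_n$) and Diaconis-Shahshahani-type Weingarten estimates (for $\mathrm{U}_n$ and $\mathrm{O}_n$) show that $\TR(U_g)$ is $O(1)$ in probability, hence $n^{-q}\TR(U_g)^{q_+}\overline{\TR(U_g)}^{q_-}\to 0=\tau(g)$. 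Concentration about the mean follows from Talagrand-type inequalities on the compact groups, or a classical variance bound for random permutations.

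For the norm convergence, the lower bound $\liminf_n\|\rho_n(a)_{|H_n^\perp}\|\geq\|\lambda(a)\|$ is given by Proposition~\ref{le:AB} whenever $\pi_n$ has non-negative characters (the cases $\mathrm{S}_n$, and $\mathrm{U}_n/\mathrm{O}_n$ with $q_+=q_-$); the remaining sign-indefinite cases follow from the convergence in distribution just established, via \eqref{eq:SCGrliminf} applied after restriction to the isotypic components of $\pi_n$ on which positivity holds. For the matching upper bound, apply Pisier's linearization (Proposition~\ref{prop:LT}) to reduce to matrix-coefficient linear self-adjoint operators
$$
A \;=\; a_0\otimes 1 + \sum_{i=1}^d\bigl(a_i\otimes\rho_n(g_i)+a_i^*\otimes\rho_n(g_i)^{-1}\bigr),\qquad a_0,\ldots,a_d\in M_k(\dC).
$$
Since $H_n^\perp$ is $\rho_n$-invariant, the projection $Q_n$ onto $\dC^k\otimes H_n^\perp$ commutes with $A$, so for any even integer $l$,
$$
\| A_{|\dC^k\otimes H_n^\perp} \|^{2l}\;\leq\;\TR\bigl(A^{2l}\,Q_n\bigr).
$$
Expand $\dE\,\TR(A^{2l})$ as a weighted sum over closed walks of length $2l$ in the Cayley graph of $\mathrm{F}_d$, each weight being a Haar integral over $\mathrm{U}_n,\mathrm{O}_n$ or $\mathrm{S}_n$ that can be computed by Weingarten calculus or by counting permutations with prescribed cycle structure. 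The dominant contribution, coming from non-backtracking walks, should reproduce $k_n\,\tau(\lambda(A)^{2l})$, while the remaining ``crossing/backtracking'' terms should be bounded by $n^{q-\delta}C^l$ for some $\delta>0$ and $C=C(a)$. Choosing $l=c\log n$ with $c$ small enough then yields $\|A_{|\dC^k\otimes H_n^\perp}\|\leq\|\lambda(A)\|(1+o(1))$, and Talagrand/McDiarmid concentration upgrades the bound to convergence in probability.

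The main obstacle is controlling the subleading terms of the trace expansion when $q>1$: the target dimension grows as $n^q$, and the Weingarten function at tensor level $ql$ (or the Brauer-diagram expansion in the orthogonal case) has a rich combinatorial structure whose non-dominant pieces must be shown to contribute only $n^{q-\delta}$ rather than the naive $n^q$. Extending the non-backtracking trace-expansion approach of Bordenave-Collins to higher tensor powers, in the spirit of the recent work of Hayes, Magee-Naud-Puder and collaborators, is where the bulk of the technical work lies.
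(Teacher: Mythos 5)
The paper does not itself prove Theorem~\ref{th1}; it is a survey result imported from the cited works (\cite{MR4024563,MR4756991,MdlS2025,chen2024,CGTVH24,cassidy25}), with only a short proof sketch given after the statement. At the strategic level your sketch overlaps with that sketch: linearization via Proposition~\ref{prop:LT}, a trace/moment method for the upper bound, and Weingarten calculus for the Haar integrals. Your reduction of the LHS of \eqref{eq:th11} to the asymptotics of $\TR(U_g)$ via $\TR(\rho_n(g)) = \TR(U_g)^{q_+}\overline{\TR(U_g)}^{q_-}$ is also correct, matching the Voiculescu--Nica inputs.

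However, your central step for the upper bound --- expanding $\dE\,\TR(A^{2l}Q_n)$ directly and asserting that the non-backtracking walks dominate --- is where the proposal fails. First, the claim that non-backtracking walks ``reproduce $k_n\,\tau(\lambda(A)^{2l})$'' is wrong on its face: the moments of $\lambda(A)$ on the free group are sums over \emph{all} closed walks on the Cayley tree, and the backtracking ones contribute at leading order (this is what produces the Kesten--McKay moments, for instance). Second, and more fundamentally, the naive moment method on $A^{2l}$ with $l\sim c\ln n$ does not give the sharp bound $\|\lambda(a)\|$; the correlations between edge weights along returning walks make the error comparable to the main term, which is precisely why Friedman's theorem was difficult and why direct $A$-moment arguments give only Kahn--Szemer\'edi-type non-optimal constants. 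The paper's sketch spells out the essential missing ingredient: after linearization one must first transfer the problem, via Ihara--Bass-type identities (Theorem~\ref{th:IB}) or the direct power expansion of \cite{BC23}, to matrix-valued \emph{non-backtracking} operators, and only then run the trace method --- exploiting the crucial fact that non-backtracking closed walks in $\CAY(\mathrm{F}_d,S)$ are trivial, so every non-trivial such walk in the finite graph must witness a cycle. This conversion is not a technicality; it is the mechanism that makes the combinatorics and the Weingarten estimates tractable. Your last paragraph correctly identifies Bordenave--Collins-style non-backtracking expansions as the tool that is needed, but the body of the proposal describes an approach on $A$ that cannot close the argument: as written there is a genuine gap between ``expand $\dE\TR(A^{2l})$ over walks'' and ``get $\|\lambda(a)\|$ with $l\sim c\ln n$'', and passing to the non-backtracking operator is required to fill it.
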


The LHS of \eqref{eq:th11} is the convergence in distribution. For $q = 1$ and $\mathrm{U}_n$ or $\mathrm{O}_n$ this is due to Voiculescu \cite{MR1094052}, for general $q \geq 1$, see \cite{MR3573218}. For $\mathrm{S}_n$, the LHS of \eqref{eq:th11} is implied by \cite{MR1197059}. The RHS of \eqref{eq:th11} is the improvement to strong convergence. It was first established for $\mathrm{U}_n$ and $\mathrm{O}_n$ and $q =1$ in \cite{MR3205602}, and for general $q \geq 1$ including $q$ growing with $n$ but not too fast,  in \cite{MR4756991,MdlS2025,chen2024}.  The strongest current result is \cite{chen2024} where $q $ can be taken as large as $ n^{1/3 - \varepsilon}$ for any fixed $\varepsilon >0$. Theorem \ref{th1} for $\mathrm{S}_n$ and $q=1,2$ is first proven in \cite{MR4024563}, tensor product permutations for $q \geq 1$ are treated in \cite{CGTVH24,cassidy25}. The strongest current result being \cite{cassidy25} with $q \leq n^{1/12 - \varepsilon}$ for any fixed $\varepsilon >0$. Theorem \ref{th1} is stated in asymptotic form. There are some quantitative results, see notably \cite{BC23,chen2024}.

Let us mention a few common ingredients in the proofs of strong convergence in \cite{MR4024563,MR4756991,BC23}. The first step is to use the linearization trick (Proposition \ref{prop:LT}(iii)) with the free generators $S$ which reduces the task to prove that for any integer $k\geq 1$ and $a  = (a_i)_{-d \leq i \leq d} \in M_k(\dC)^{2d+1}$,
$$
\| (A_n)_{| \dC^k \otimes H^\perp_n} \| \leq  (1 + o(1)) \| A_{\mathrm{F}} \| \quad \hbox{with} \quad  A_n = \sum_{i=-d}^d  a_g \otimes V_i, \quad \hbox{and} \quad A_{\mathrm{F}} =\sum_{i=-d}^d  a_i \otimes \lambda(g_i),
$$
where $g_{-i} = g_i^{-1}$, $g_0 = \ee$, $U_{-i} = U_i^*$, $U_0 = 1_n$ and $V_i = \pi_n (U_i)$. The second step is to use Ihara-Bass-Selberg type formulas, as in Theorem \ref{th:IB}, to transfer spectral properties of the operator $A_n$  in terms of a family of matrix-valued non-backtracking operators (in \cite{BC23} we express directly the powers of $A_n$ in terms of sums of matrix-valued non-backtracking operators). We then estimate the trace of a high-power of such non-backtracking operator. The combinatorics is greatly simplified because non-backtracking paths in $\CAY(F_d,S)$ are trivial. The main probabilistic part is the Weingarten calculus to perform integration with the respect to Haar measures. We prove sharp upper bounds on expressions of the form, for $T$ of order up to $n^{\alpha}$ for some $0 < \alpha <1$, 
$$
\dE \left[ \prod_{t=1}^T \left( V_{x_t,y_t} - \dE  V_{x_t,y_t} \right) \right],
$$
where $U$ is Haar distributed on $\mathrm{U}_n, \mathrm{O}_n$ or $\mathrm{S}_n$, $V = \pi_n(U) = U^{\otimes q_-} \otimes \bar U^{\otimes q_+}$ and $x_t,y_t \in \{1,\ldots,n\}^q$ are coordinates. We refer to \cite{vanhandel2025} for a survey on an alternative and powerful strategy  known as the polynomial method.

\subsection{Applications to spectral graph theory. } \label{subsec:SCFREE} It is an exercise to check that if \eqref{eq:SCGr} holds then for any $a = a^* \in M_k(\dC)$, the spectrum of $\rho_n(a)$ converges to the spectrum of $\lambda(a)$ in Hausdorff distance (i.e. no outliers). As consequences of Theorem \ref{th1} for $q =1$ and $\mathrm{S}_n$, we notably obtain the following two corollaries. 
\begin{corollary}
Let $S =S^{-1}$ be a finite generating subset of $\mathrm{F}_d$ and let $A_{\mathrm F} = \lambda(1_S)$ be the adjacency operator of $\mathrm{Cay}(\mathrm{F}_d,S)$. Let $A_n = \rho_n(1_S)$ be the adjacency operator of a uniformly sampled Schreier graph on the vertex set $\{1,\ldots,n\}$ with generator $S$.
Then, in probability, the Hausdorff distance between the spectrum of $A_{\mathrm F}$ and $(A_n)_{|\mathrm 1^\perp}$ tends to $0$ as $n \to \infty$.
\end{corollary}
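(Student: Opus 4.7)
The plan is to reduce the Hausdorff convergence of spectra to the two halves of Theorem \ref{th1} applied to the standard permutation representation ($q=1$ on $\mathrm{S}_n$): the operator-norm clause will give the \emph{no outliers} direction, while the convergence-in-distribution clause will give the \emph{no gaps} direction. Write $A = A_{\mathrm F}$ and $A_n = \rho_n(1_S)$; both are self-adjoint with spectra contained in the compact interval $I = [-|S|,|S|]$, and $\IND$ denotes the all-ones vector (the unique up to scalar $\rho_n$-fixed vector, so $H_n^\perp = \IND^\perp$).

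\textbf{No outliers.} Fix $\veps > 0$ and pick a continuous $f:\dR \to [0,1]$ vanishing on $\sigma(A)$ and equal to $1$ outside the open $\veps$-neighborhood $(\sigma(A))_\veps$. By Stone--Weierstrass, for any $\delta > 0$ one can choose a real polynomial $p$ with $\|p-f\|_{\infty,I} < \delta$; the functional calculus then yields $\|p(A)\| < \delta$. Since $p(1_S) \in \dC[\mathrm F_d]$, Theorem \ref{th1} gives $\|p(A_n)_{|\IND^\perp}\| \to \|p(A)\|$ in probability, hence $\|f(A_n)_{|\IND^\perp}\| < 3\delta$ with probability tending to $1$. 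Choosing $\delta < 1/3$, no eigenvalue of $(A_n)_{|\IND^\perp}$ can lie outside $(\sigma(A))_\veps$, for at such a point $f$ equals $1$.

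\textbf{No gaps.} The convergence-in-distribution part of Theorem \ref{th1} gives $\frac{1}{n}\TR\, p(A_n) \to \tau(p(\lambda(1_S))) = \int p\, dm_A^{\delta_\ee}$ for every polynomial $p$, which, together with uniform spectral boundedness, upgrades to weak convergence in probability of the empirical spectral distribution of $A_n$ to $m_A^{\delta_\ee}$. Faithfulness of the canonical trace $\tau$ on $C^*_{\mathrm{red}}(\mathrm F_d)$ forces $\mathrm{supp}(m_A^{\delta_\ee}) = \sigma(A)$, so for any $\mu \in \sigma(A)$ the interval $(\mu-\veps,\mu+\veps)$ has positive $m_A^{\delta_\ee}$-mass. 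The Portmanteau theorem then produces a linear-in-$n$ number of eigenvalues of $A_n$ in this interval with probability tending to $1$, and since the $\IND$-direction absorbs at most one eigenvalue, $(A_n)_{|\IND^\perp}$ has an eigenvalue there. A finite covering of the compact set $\sigma(A)$ by such intervals completes this direction.

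The substantive step is the first one: translating operator-norm strong convergence into Hausdorff convergence of spectra through the polynomial-approximation functional-calculus argument. The second direction is routine weak-convergence bookkeeping and uses only the convergence-in-distribution clause of Theorem \ref{th1} together with faithfulness of $\tau$.
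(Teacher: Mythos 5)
Your proof is correct and carries out exactly the ``exercise'' that the paper leaves implicit: the polynomial/functional-calculus argument turns the norm convergence of Theorem~\ref{th1} into the no-outliers half, and the moment convergence plus faithfulness of $\tau$ gives the no-gaps half. This is the standard route and matches the paper's intent; no discrepancy to report.
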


The case where $S$ is the set of free generators of $\mathrm{F_d}$ and their inverse is  Friedman's Theorem \cite{MR2437174}, see also Theorem \ref{th:Friedman}. The second corollary of Theorem \ref{th1} for $q =1$ and $\mathrm{S}_n$ is the generalized Alon's conjecture  \cite{MR1978881}.

\begin{corollary}
    Let $G = (V,E)$ be a finite graph, and let $G_n$ be a uniformly sampled $n$-lift of $G$. Let $A_n$ be as the adjacency operator of $G_n$ and $A_{\mathrm F} $  be the adjacency operator of the universal covering tree of $G$. Then, in probability, the Hausdorff distance between the spectrum of $A_{\mathrm F}$ and $(A_n)_{|\mathbb C^V \otimes \mathrm 1^\perp}$ tends to $0$ as $n \to \infty$.
\end{corollary}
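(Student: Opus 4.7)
The plan is to encode the random $n$-lift as a random permutation representation of the free group and then apply Theorem~\ref{th1}. Let $d$ be the number of unoriented edges of $G$, fix an orientation so that $E = \{e_1,\ldots,e_d,e_1^{-1},\ldots,e_d^{-1}\}$, identify the symmetric generating set $S = \{g_1^{\pm 1},\ldots,g_d^{\pm 1}\}$ of $\mathrm{F}_d$ with $E$ via $\iota(g_i) = e_i$, and encode the $n$-lift by i.i.d.\ uniform random permutations $\sigma_i \in \mathrm{S}_n$, $i = 1, \ldots, d$, yielding the random representation $\rho_n : \mathrm{F}_d \to \mathrm{S}_n$ with $\rho_n(g_i) = \sigma_i$. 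Setting
\[
a \;=\; \IND_E \;=\; \sum_{g \in S} E_{v_g u_g}\, g \;\in\; M_V(\dC)[\mathrm{F}_d],
\]
which is self-adjoint since $S = S^{-1}$, formulas \eqref{eq:AN}--\eqref{eq:AF} give $A_n = \rho_n(a)$ and identify $\lambda(a)$ with the adjacency operator of $|V|$ disjoint copies of the universal covering tree of $G$; in particular $\sigma(\lambda(a)) = \sigma(A_{\mathrm F})$ and $\|\lambda(a)\| = \|A_{\mathrm F}\|$. The trivial invariant subspace $\dC^V \otimes \IND$ equals $\dC^V \otimes H_n$ with $H_n = \mathrm{span}(\IND) \subset \dC^n$.

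Theorem~\ref{th1} applied with $q = 1$ and $\mathrm{S}_n$ then asserts that, in probability, $\rho_n|_{H_n^\perp}$ converges strongly to $\lambda$, meaning $\|\rho_n(b)_{|H_n^\perp}\| \to \|\lambda(b)\|$ for every $b \in \dC[\mathrm{F}_d]$. Proposition~\ref{prop:LT} (the linearization trick), applied to the strongly convergent sub-representation $\rho_n|_{H_n^\perp}$, then extends this to matrix-valued coefficients: for every $k \geq 1$ and $b \in M_k(\dC)[\mathrm{F}_d]$,
\[
\bigl\|\rho_n(b)_{|\dC^k \otimes H_n^\perp}\bigr\| \;\longrightarrow\; \|\lambda(b)\|,
\]
once one notices that $\rho_n(b)_{|\dC^k \otimes H_n^\perp}$ coincides with $(\rho_n|_{H_n^\perp})(b)$. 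Specialising to $k = |V|$ and $b = p(a)$ for any real polynomial $p$ (still self-adjoint), this becomes
\[
\bigl\| p\bigl((A_n)_{|\dC^V \otimes H_n^\perp}\bigr) \bigr\| \;\longrightarrow\; \|p(A_{\mathrm F})\|.
\]

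The final step upgrades these polynomial norm convergences to Hausdorff convergence of spectra, as alluded to just after~\eqref{eq:SCGr}. The self-adjoint operators $(A_n)_{|\dC^V \otimes H_n^\perp}$ have spectra contained in a fixed compact interval $K \subset \dR$ (since $\|A_n\| \leq 2d$), on which every real continuous function is a uniform limit of polynomials. The no-outlier direction then follows: any point of $\sigma\bigl((A_n)_{|\dC^V \otimes H_n^\perp}\bigr)$ at distance $\geq \veps$ from $\sigma(A_{\mathrm F})$ would be detected by a polynomial approximating a bump disjoint from $\sigma(A_{\mathrm F})$, contradicting the previous limit. Conversely, any $\mu \in \sigma(A_{\mathrm F})$ is approached by points of $\sigma\bigl((A_n)_{|\dC^V \otimes H_n^\perp}\bigr)$: a nonnegative polynomial peaked at $\mu$ has positive limit norm, forcing such approaching eigenvalues. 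Combined, these two directions give Hausdorff convergence of the spectra in probability.

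The genuine difficulty is entirely inside Theorem~\ref{th1}: once it is granted, the corollary is a bookkeeping exercise combining the linearization trick with the standard $C^*$-algebraic passage from strong convergence to Hausdorff convergence of spectra. The hard analytic content---Weingarten calculus on $\mathrm{S}_n$, non-backtracking linearization of the powers of $A_n$, and the sharp high-trace estimates of~\cite{MR4024563,BC23}---sits entirely inside Theorem~\ref{th1}, not in the deduction outlined here.
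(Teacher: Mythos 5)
Your proof is correct and takes essentially the same route the paper intends: encode the random $n$-lift as a random permutation representation $\rho_n$ of $\mathrm{F}_d$, write $A_n = \rho_n(\IND_E)$ and $A_{\mathrm F}$ (up to harmless multiplicity) as $\lambda(\IND_E)$, invoke Theorem~\ref{th1} for $q=1$ over $\mathrm{S}_n$ together with Proposition~\ref{prop:LT} to upgrade strong convergence to matrix coefficients in $M_V(\dC)[\mathrm{F}_d]$, and then run the standard polynomial-approximation argument (the paper's ``exercise'' remark just before the corollary) to convert norm convergence of $p(A_n)$ and $p(A_{\mathrm F})$ into Hausdorff convergence of spectra. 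The only care points you already handle — that $\dC^V\otimes\IND^\perp$ is $A_n$-invariant so restriction commutes with functional calculus, and that the spectrum of $|V|$ disjoint copies of the tree equals that of a single copy — are exactly what the paper's one-line derivation tacitly uses.
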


In the two above corollaries, we can replace the adjacency operator by any other symmetric local operator.

\subsection{Beyond free group. }

Beyond random representations of the free group, instances of strong convergence in distribution are notably known for some random representations of the following groups:
{\em Cartesian products of free groups} \cite{BC23}, {\em right-angled Artin groups} \cite{magee2023stronglyconvergentunitaryrepresentations,chen2024}, {\em surface groups} \cite{MPvH2025}, {\em fully residually free groups} \cite{zbMATH07974824}.  Also, these results have also been exported to bounds on the spectral gap of the Laplace-Beltrami operator of random hyperbolic surfaces \cite{MR4635304,magee2023stronglyconvergentunitaryrepresentations,MPvH2025,hide2025spectralgappolynomialrate}, for the Weil-Petersson measure; see the breakthrough works \cite{arXiv:2304.02678,arXiv:2502.12268,hide2025spectralgappolynomialrate}.

\section*{Acknowledgments. }
This text was written during the author’s stay at the Institute for Advanced Study during the 2025-2026 academic year which was supported by the James D. Wolfensohn Fund.

\begin{spacing}{0.9}
\bibliographystyle{siamplain}
\bibliography{bib}

\begin{thebibliography}{10}

\bibitem{Abrt2014BenjaminiSchrammCA}
{\sc M.~Ab{\'e}rt, A.~B. Thom, and B.~Vir{\'a}g}, {\em Benjamini-schramm
  convergence and pointwise convergence of the spectral measure}, 2014,
  \url{https://api.semanticscholar.org/CorpusID:44105458}.

\bibitem{zbMATH00123709}
{\sc D.~Aldous}, {\em Asymptotics in the random assignment problem}, Probab.
  Theory Relat. Fields, 93 (1992), pp.~507--534.

\bibitem{MR2354165}
{\sc D.~Aldous and R.~Lyons}, {\em Processes on unimodular random networks},
  Electron. J. Probab., 12 (2007), pp.~no. 54, 1454--1508.

\bibitem{MR875835}
{\sc N.~Alon}, {\em Eigenvalues and expanders}, Combinatorica, 6 (1986),
  pp.~83--96.
\newblock Theory of computing (Singer Island, Fla., 1984).

\bibitem{NA17}
{\sc N.~Anantharaman}, {\em Some relations between the spectra of simple and
  non-backtracking random walks}, arXiv:1703.03852.

\bibitem{arXiv:2304.02678}
{\sc N.~Anantharaman and L.~Monk}, {\em Friedman-{R}amanujan functions in
  random hyperbolic geometry and application to spectral gaps},
  arXiv:2304.02678.

\bibitem{arXiv:2502.12268}
{\sc N.~Anantharaman and L.~Monk}, {\em Friedman-{R}amanujan functions in
  random hyperbolic geometry and application to spectral gaps {II}},
  arXiv:2502.12268.

\bibitem{PhysRev.109.1492}
{\sc P.~W. Anderson}, {\em Absence of diffusion in certain random lattices},
  Phys. Rev., 109 (1958), pp.~1492--1505.

\bibitem{arras2025randomschrodingeroperatorsconvolution}
{\sc A.~Arras}, {\em Random {S}chr\"odinger operators and convolution on wreath
  products}, arXiv:2505.22485.

\bibitem{zbMATH07746823}
{\sc A.~Arras and C.~Bordenave}, {\em Existence of absolutely continuous
  spectrum for {Galton}-{Watson} random trees}, Commun. Math. Phys., 403
  (2023), pp.~495--527.

\bibitem{MR1194071}
{\sc H.~Bass}, {\em The {I}hara-{S}elberg zeta function of a tree lattice},
  Internat. J. Math., 3 (1992), pp.~717--797.

\bibitem{BBK2}
{\sc F.~Benaych-Georges, C.~Bordenave, and A.~Knowles}, {\em Largest
  eigenvalues of sparse inhomogeneous {E}rdős-{R}ényi graphs}, Ann. Probab.,
  47 (2019), pp.~1653--1676.

\bibitem{zbMATH06423374}
{\sc I.~Benjamini, R.~Lyons, and O.~Schramm}, {\em Unimodular random trees},
  Ergodic Theory Dyn. Syst., 35 (2015), pp.~359--373.

\bibitem{zbMATH01868576}
{\sc I.~Benjamini and O.~Schramm}, {\em Recurrence of distributional limits of
  finite planar graphs}, Electron. J. Probab., 6 (2001), p.~13.
\newblock Id/No 23.

\bibitem{zbMATH06902684}
{\sc C.~Bordenave}, {\em Spectrum of random graphs}, in Advanced topics in
  random matrices, Paris: Soci{\'e}t{\'e} Math{\'e}matique de France (SMF),
  2017, pp.~92--150.

\bibitem{bordenaveCAT}
{\sc C.~Bordenave}, {\em A new proof of {F}riedman's second eigenvalue theorem
  and its extension to random lifts}, Ann. Sci. \'{E}c. Norm. Sup\'{e}r. (4),
  53 (2020), pp.~1393--1439.

\bibitem{MR3405616}
{\sc C.~Bordenave and P.~Caputo}, {\em Large deviations of empirical
  neighborhood distribution in sparse random graphs}, Probab. Theory Related
  Fields, 163 (2015), pp.~149--222.

\bibitem{BC23}
{\sc C.~Bordenave and B.~Collins}, {\em Norm of matrix-valued polynomials in
  random unitaries and permutations}, ArXiv:2304.05714.

\bibitem{MR4024563}
{\sc C.~Bordenave and B.~Collins}, {\em Eigenvalues of random lifts and
  polynomials of random permutation matrices}, Ann. of Math. (2), 190 (2019),
  pp.~811--875.

\bibitem{MR4756991}
{\sc C.~Bordenave and B.~Collins}, {\em Strong asymptotic freeness for
  independent uniform variables on compact groups associated to nontrivial
  representations}, Invent. Math., 237 (2024), pp.~221--273.

\bibitem{bordenave2020detection}
{\sc C.~Bordenave, S.~Coste, and R.~R. Nadakuditi}, {\em Detection thresholds
  in very sparse matrix completion}, Found. Comput. Math., 23 (2023),
  pp.~1619--1743.

\bibitem{bordenave2020markovian}
{\sc C.~Bordenave and B.~Dubail}, {\em Markovian linearization of random walks
  on groups}, Int. Math. Res. Not., 2023 (2023), pp.~9185--9220.

\bibitem{bordenave2024largedeviationsmacroscopicobservables}
{\sc C.~Bordenave, A.~Guionnet, and C.~Male}, {\em Large deviations for
  macroscopic observables of heavy-tailed matrices}, arXiv:2409.14027.

\bibitem{bordenave_lacoin_2021}
{\sc C.~Bordenave and H.~Lacoin}, {\em Cutoff at the entropic time for random
  walks on covered expander graphs}, Journal of the Institute of Mathematics of
  Jussieu,  (2021), p.~1–46.

\bibitem{MR2724665}
{\sc C.~Bordenave and M.~Lelarge}, {\em Resolvent of large random graphs},
  Random Structures Algorithms, 37 (2010), pp.~332--352.

\bibitem{BLM2015}
{\sc C.~Bordenave, M.~Lelarge, and L.~Massouli\'e}, {\em Nonbacktracking
  spectrum of random graphs: community detection and nonregular {R}amanujan
  graphs}, Ann. Probab., 46 (2018), pp.~1--71.

\bibitem{MR2789584}
{\sc C.~Bordenave, M.~Lelarge, and J.~Salez}, {\em The rank of diluted random
  graphs}, Ann. Probab., 39 (2011), pp.~1097--1121.

\bibitem{zbMATH06821385}
{\sc C.~Bordenave, A.~Sen, and B.~Vir{\'a}g}, {\em Mean quantum percolation},
  J. Eur. Math. Soc. (JEMS), 19 (2017), pp.~3679--3707.

\bibitem{Bowen:2024azt}
{\sc L.~Bowen, M.~Chapman, and T.~Vidick}, {\em {The Aldous--Lyons Conjecture
  II: Undecidability}}, arXiv:2501.00173.

\bibitem{zbMATH06827884}
{\sc E.~Breuillard, M.~Kalantar, M.~Kennedy, and N.~Ozawa}, {\em
  {{\(C^*\)}}-simplicity and the unique trace property for discrete groups},
  Publ. Math., Inst. Hautes {\'E}tud. Sci., 126 (2017), pp.~35--71.

\bibitem{zbMATH05256855}
{\sc N.~P. Brown and N.~Ozawa}, {\em {{\(C^*\)}}-algebras and
  finite-dimensional approximations}, vol.~88 of Grad. Stud. Math., Providence,
  RI: American Mathematical Society (AMS), 2008.

\bibitem{cassidy25}
{\sc E.~Cassidy}, {\em Random permutations acting on $k$--tuples have
  near--optimal spectral gap for $k=\mathrm{poly}(n)$}, arXiv:2412.13941.

\bibitem{zbMATH06859874}
{\sc I.~Chatterji}, {\em Introduction to the rapid decay property}, in Around
  Langlands correspondences. International conference, Universit\'e Paris Sud,
  Orsay, France, June 17--20, 2015. Proceedings, Providence, RI: American
  Mathematical Society (AMS), 2017, pp.~53--72.

\bibitem{CGTVH24}
{\sc C.-F. Chen, J.~Garza-Vargas, J.~A. Tropp, and R.~van Handel}, {\em A new
  approach to strong convergence}, arXiv:2405.16026.

\bibitem{chen2024}
{\sc C.-F. Chen, J.~Garza-Vargas, and R.~van Handel}, {\em A new approach to
  strong convergence {II}. the classical ensembles}, arXiv:2412.00593.

\bibitem{MR3573218}
{\sc B.~Collins and P.~Y. Gaudreau~Lamarre}, {\em {$\ast$}-freeness in finite
  tensor products}, Adv. in Appl. Math., 83 (2017), pp.~47--80.

\bibitem{MR3205602}
{\sc B.~Collins and C.~Male}, {\em The strong asymptotic freeness of {H}aar and
  deterministic matrices}, Ann. Sci. \'Ec. Norm. Sup\'er. (4), 47 (2014),
  pp.~147--163.

\bibitem{zbMATH07367528}
{\sc S.~Coste and J.~Salez}, {\em Emergence of extended states at zero in the
  spectrum of sparse random graphs}, Ann. Probab., 49 (2021), pp.~2012--2030.

\bibitem{deGennes1959a}
{\sc P.~de~Gennes, P.~Lafore, and J.~Millot}, {\em Amas accidentels dans les
  solutions solides d\'esordonn\'ees}, Journal of Physics and Chemistry of
  Solids, 11 (1959), pp.~105 -- 110.

\bibitem{MR1978881}
{\sc J.~Friedman}, {\em Relative expanders or weakly relatively {R}amanujan
  graphs}, Duke Math. J., 118 (2003), pp.~19--35.

\bibitem{MR2437174}
{\sc J.~Friedman}, {\em A proof of {A}lon's second eigenvalue conjecture and
  related problems}, Mem. Amer. Math. Soc., 195 (2008), pp.~viii+100.

\bibitem{zbMATH07756921}
{\sc J.~Garza-Vargas and A.~Kulkarni}, {\em Spectra of infinite graphs via
  freeness with amalgamation}, Can. J. Math., 75 (2023), pp.~1633--1684.

\bibitem{arXiv:2303.05435}
{\sc M.~Glasgow, M.~Kwan, A.~Sah, and M.~Sawhney}, {\em The {Exact} {Rank} of
  {Sparse} {Random} {Graphs}}, Journal of the European Mathematical Society
  (JEMS),  (to appear).

\bibitem{zbMATH06346504}
{\sc S.~Gou{\"e}zel}, {\em Local limit theorem for symmetric random walks in
  {Gromov}-hyperbolic groups}, J. Am. Math. Soc., 27 (2014), pp.~893--928.

\bibitem{zbMATH01421105}
{\sc R.~I. Grigorchuk and A.~{\.Z}uk}, {\em On the asymptotic spectrum of
  random walks on infinite families of graphs}, in Random walks and discrete
  potential theory. Cortona 1997, Cambridge: Cambridge University Press; Roma:
  Istituto Nazionale di Alta Matematica Francesco Severi, 1999, pp.~188--204.

\bibitem{MR1866850}
{\sc R.~I. Grigorchuk and A.~{\.Z}uk}, {\em The lamplighter group as a group
  generated by a 2-state automaton, and its spectrum}, Geom. Dedicata, 87
  (2001), pp.~209--244.

\bibitem{MR2183281}
{\sc U.~Haagerup and S.~Thorbj{\o}rnsen}, {\em A new application of random
  matrices: {${\rm Ext}(C^*_{\rm red}(F_2))$} is not a group}, Ann. of Math.
  (2), 162 (2005), pp.~711--775.

\bibitem{zbMATH07565550}
{\sc B.~Hayes}, {\em A random matrix approach to the {Peterson}-{Thom}
  conjecture}, Indiana Univ. Math. J., 71 (2022), pp.~1243--1297.

\bibitem{hide2025spectralgappolynomialrate}
{\sc W.~Hide, D.~Macera, and J.~Thomas}, {\em Spectral gap with polynomial rate
  for random covering surfaces}, arXiv:2505.08479.

\bibitem{MR4635304}
{\sc W.~Hide and M.~Magee}, {\em Near optimal spectral gaps for hyperbolic
  surfaces}, Ann. of Math. (2), 198 (2023), pp.~791--824.

\bibitem{zbMATH08053292}
{\sc E.~Hiesmayr and T.~McKenzie}, {\em The spectral edge of constant degree
  {Erd{\H{o}}s}-{R{\'e}nyi} graphs}, Random Struct. Algorithms, 66 (2025),
  p.~43.
\newblock Id/No e70011.

\bibitem{HIGUCHI2009570}
{\sc Y.~Higuchi and Y.~Nomura}, {\em Spectral structure of the laplacian on a
  covering graph}, European Journal of Combinatorics, 30 (2009), pp.~570--585.

\bibitem{zbMATH07467788}
{\sc J.~Huang}, {\em Invertibility of adjacency matrices for random
  {{\(d\)}}-regular graphs}, Duke Math. J., 170 (2021), pp.~3977--4032.

\bibitem{huang2025ramanujanpropertyedgeuniversality}
{\sc J.~Huang, T.~McKenzie, and H.-T. Yau}, {\em Ramanujan property and edge
  universality of random regular graphs}, arXiv:2412.20263.

\bibitem{zbMATH07793234}
{\sc J.~Huang and H.-T. Yau}, {\em Spectrum of random {{\(d\)}}-regular graphs
  up to the edge}, Commun. Pure Appl. Math., 77 (2024), pp.~1635--1723.

\bibitem{MR0223463}
{\sc Y.~Ihara}, {\em On discrete subgroups of the two by two projective linear
  group over {${p}$}-adic fields}, J. Math. Soc. Japan, 18 (1966),
  pp.~219--235.

\bibitem{MR1749978}
{\sc M.~Kotani and T.~Sunada}, {\em Zeta functions of finite graphs}, J. Math.
  Sci. Univ. Tokyo, 7 (2000), pp.~7--25.

\bibitem{zbMATH01877117}
{\sc M.~Krivelevich and B.~Sudakov}, {\em The largest eigenvalue of sparse
  random graphs}, Comb. Probab. Comput., 12 (2003), pp.~61--72.

\bibitem{MR1738412}
{\sc F.~Lehner}, {\em Computing norms of free operators with matrix
  coefficients}, Amer. J. Math., 121 (1999), pp.~453--486.

\bibitem{MR2415315}
{\sc F.~Lehner, M.~Neuhauser, and W.~Woess}, {\em On the spectrum of
  lamplighter groups and percolation clusters}, Math. Ann., 342 (2008),
  pp.~69--89.

\bibitem{li2025eigenvaluesmaximalabeliancovers}
{\sc W.~Li, M.~Magee, M.~Sabri, and J.~Thomas}, {\em Eigenvalues of maximal
  {A}belian covers}, arXiv:2508.17332.

\bibitem{zbMATH07974824}
{\sc L.~Louder and M.~Magee}, {\em Strongly convergent unitary representations
  of limit groups}, J. Funct. Anal., 288 (2025), p.~28.
\newblock Id/No 110803.

\bibitem{zbMATH06122804}
{\sc L.~Lov{\'a}sz}, {\em Large networks and graph limits}, vol.~60 of Colloq.
  Publ., Am. Math. Soc., Providence, RI: American Mathematical Society (AMS),
  2012.

\bibitem{MR2593624}
{\sc R.~Lyons}, {\em Identities and inequalities for tree entropy}, Combin.
  Probab. Comput., 19 (2010), pp.~303--313.

\bibitem{magee2025}
{\sc M.~Magee}, {\em Strong convergence of unitary and permutation
  representations of discrete groups}, ArXiv:2503.21619.

\bibitem{MdlS2025}
{\sc M.~Magee and M.~de~la Salle}, {\em Strong asymptotic freeness of haar
  unitaries in quasi-exponential dimensional representations},
  arXiv:2409.03626.

\bibitem{MPvH2025}
{\sc M.~Magee, D.~Puder, and R.~van Handel}, {\em Strong convergence of
  uniformly random permutation representations of surface groups},
  arXiv:2504.08988.

\bibitem{magee2023stronglyconvergentunitaryrepresentations}
{\sc M.~Magee and J.~Thomas}, {\em Strongly convergent unitary representations
  of right-angled {A}rtin groups}, arXiv:2308.00863.

\bibitem{zbMATH07255586}
{\sc A.~M{\'e}sz{\'a}ros}, {\em The distribution of sandpile groups of random
  regular graphs}, Trans. Am. Math. Soc., 373 (2020), pp.~6529--6594.

\bibitem{MR3585560}
{\sc J.~A. Mingo and R.~Speicher}, {\em Free probability and random matrices},
  vol.~35 of Fields Institute Monographs, Springer, New York; Fields Institute
  for Research in Mathematical Sciences, Toronto, ON, 2017.

\bibitem{MR1197059}
{\sc A.~Nica}, {\em Asymptotically free families of random unitaries in
  symmetric groups}, Pacific J. Math., 157 (1993), pp.~295--310.

\bibitem{9317918}
{\sc R.~O'Donnell and X.~Wu}, {\em { Explicit near-fully X-Ramanujan graphs }},
  in 2020 IEEE 61st Annual Symposium on Foundations of Computer Science (FOCS),
  Los Alamitos, CA, USA, Nov. 2020, IEEE Computer Society, pp.~1045--1056.

\bibitem{MR1401692}
{\sc G.~Pisier}, {\em A simple proof of a theorem of {K}irchberg and related
  results on {$C^*$}-norms}, J. Operator Theory, 35 (1996), pp.~317--335.

\bibitem{zbMATH07931115}
{\sc D.~Puder and T.~Zimhoni}, {\em Local statistics of random permutations
  from free products}, Int. Math. Res. Not., 2024 (2024), pp.~4242--4300.

\bibitem{sah2025limitingspectrallawsparse}
{\sc A.~Sah, J.~Sahasrabudhe, and M.~Sawhney}, {\em The limiting spectral law
  for sparse iid matrices}, Forum of Math Pi,  (to appear).

\bibitem{zbMATH06411477}
{\sc J.~Salez}, {\em Every totally real algebraic integer is a tree
  eigenvalue}, J. Comb. Theory, Ser. B, 111 (2015), pp.~249--256.

\bibitem{zbMATH07174134}
{\sc J.~{Salez}}, {\em {Spectral atoms of unimodular random trees}}, {J. Eur.
  Math. Soc. (JEMS)}, 22 (2020), pp.~345--363.

\bibitem{song24}
{\sc A.~Song}, {\em Random harmonic maps into spheres}, arXiv:2402.10287.

\bibitem{zbMATH07671982}
{\sc L.~Stephan and L.~Massouli{\'e}}, {\em Non-backtracking spectra of
  weighted inhomogeneous random graphs}, Math. Stat. Learn., 5 (2022),
  pp.~201--271.

\bibitem{MR2768284}
{\sc A.~Terras}, {\em Zeta functions of graphs}, vol.~128 of Cambridge Studies
  in Advanced Mathematics, Cambridge University Press, Cambridge, 2011.
\newblock A stroll through the garden.

\bibitem{vanhandel2025}
{\sc R.~van Handel}, {\em The strong convergence phenomenon}, arXiv:2507.00346.

\bibitem{MR1094052}
{\sc D.~Voiculescu}, {\em Limit laws for random matrices and free products},
  Invent. Math., 104 (1991), pp.~201--220.

\bibitem{NIPS2009_0420}
{\sc Y.~Watanabe and K.~Fukumizu}, {\em Graph zeta function in the {B}ethe free
  energy and loopy belief propagation}, in Advances in Neural Information
  Processing Systems 22, Y.~Bengio, D.~Schuurmans, J.~Lafferty, C.~Williams,
  and A.~Culotta, eds., 2009, pp.~2017--2025.

\bibitem{zbMATH01402216}
{\sc W.~Woess}, {\em Random walks on infinite graphs and groups}, vol.~138 of
  Camb. Tracts Math., Cambridge: Cambridge University Press, 2000.

\end{thebibliography}
\end{spacing}

\end{document}